\documentclass[11pt,a4paper]{amsart}
\usepackage{graphicx}
\usepackage{amsmath,amsfonts}
\usepackage{amsthm,amssymb,latexsym}
\usepackage[active]{srcltx}
\usepackage[usenames]{color}
\usepackage{tikz}
\usetikzlibrary{matrix,decorations.pathreplacing,calc}

\DeclareMathOperator{\rank}{rank}
\newcommand{\stirling}[2]{\genfrac{[}{]}{0pt}{}{#1}{#2}}

\newcommand{\bfs}{\boldsymbol}

%%%%%%%%%%%%%%%%%%%%%%%%%%%%%%%%%%%%%%%%%%%%%%%%%%%%%%%%%%%%%%%%%%
\vfuzz2pt % Don't report over-full v-boxes if over-edge is small
\hfuzz2pt % Don't report over-full h-boxes if over-edge is small
% THEOREMS %%%%%%%%%%%%%%%%%%%%%%%%%%%%%%%%%%%%%%%%%%%%%%%%%%%%%%%
\newtheorem{theorem}{Theorem}[section]
\newtheorem{corollary}[theorem]{Corollary}
\newtheorem{lemma}[theorem]{Lemma}

\newtheorem*{fact*}{Fact}
\newtheorem{proposition}[theorem]{Proposition}
\theoremstyle{definition}

\newtheorem{remark}[theorem]{Remark}
\numberwithin{equation}{section}
\newtheorem{algorithm}[theorem]{Algorithm}
% MATH %%%%%%%%%%%%%%%%%%%%%%%%%%%%%%%%%%%%%%%%%%%%%%%%%%%%%%%%%%%
\newcommand{\N}{\mathbb N}

\newcommand{\A}{\mathbb A}
\newcommand{\F}{\mathbb F}
\newcommand{\K}{\mathbb K}

\newcommand{\Pp}{\mathbb P}

\newcommand{\FF}{{\sf F}}

\newcommand{\fq}{\F_{\hskip-0.7mm q}}

\newcommand{\fqs}{\F_{\hskip-0.7mm q^s}}

\newcommand{\cfq}{\overline{\F}_{\hskip-0.7mm q}}

\textheight 24cm \addtolength{\oddsidemargin}{-.475in}
\addtolength{\evensidemargin}{-.475in}
\addtolength{\textwidth}{0.95in}

\addtolength{\topmargin}{-.875in}
% recognize mathmode and switch macro context accordingly
\def\ifm#1#2{\relax \ifmmode#1\else#2\fi}

% short abbreviations:

%  comma \ldots comma
\newcommand{\klk}    {\ifm {,\ldots,} {$,\ldots,$}}

%  plus \cdots plus

% first letter + index from 1 to n or 1 to [parameter]

%%%%%%%%%%%%%%%%%%%%%%%%%%%%%%%%%%%%%%%%%%%%%%%%%%%%%%%%%%%%%%%%%%

\begin{document}
\title[Computation of rational solutions of underdetermined systems]{On the
computation of rational solutions of underdetermined systems over a
finite field}
\author[N. Gimenez]{Nardo Gim\'enez${}^{1}$}%

\author[G. Matera]{Guillermo Matera${}^{1,2}$}%

\author[M. P\'erez]{Mariana P\'erez${}^{2,3}$}

\author[M. Privitelli]{Melina Privitelli${}^{2,4}$}

\address{${}^{1}$Instituto del Desarrollo Humano,
Universidad Nacional de Gene\-ral Sarmiento, J.M. Guti\'errez 1150
(B1613GSX) Los Polvorines, Buenos Aires, Argentina}
\email{\{agimenez,\,gmatera\}@ungs.edu.ar}
\address{${}^{2}$ National Council of Science and Technology (CONICET),
Ar\-gentina}
\address{${}^{3}$
Universidad Nacional de Hurlingham, Instituto de Tecnolog\'ia e
Ingenier\'ia\\ Av. Gdor. Vergara 2222 (B1688GEZ), Villa Tesei,
Buenos Aires, Argentina} \email{mariana.perez@unahur.edu.ar}
\address{${}^{4}$Instituto de Ciencias,
Universidad Nacional de Gene\-ral Sarmiento, J.M. Guti\'errez 1150
(B1613GSX) Los Polvorines, Buenos Aires, Argentina}
\email{mprivite@ungs.edu.ar}

\thanks{The authors were partially supported by the grants UNGS 30/1146
and PICTO-UNAHUR-2019-00012.}%
\subjclass{68W40, 11G25, 14G05, 14G15}%
\keywords{Finite fields, underdetermined systems, rational
solutions, algorithms, reduced regular sequence, average--case
complexity, probability distribution}
\date{\today}
\maketitle

\begin{abstract}
We design and analyze an algorithm for computing solutions with
coefficients in a finite field $\fq$ of underdetermined systems
defined over $\fq$. The algorithm is based on reductions to
zero-dimensional searches. The searches are performed on ``vertical
strips'', namely parallel linear spaces of suitable dimension in a
given direction. Our results show that, on average, less than three
searches suffice to obtain a solution of the original system, with a
probability of success which grows exponentially with the number of
searches. The analysis of our algorithm relies on results on the
probability that the solution set (over the algebraic closure of
$\fq$) of a random system with coefficients in $\fq$ satisfies
certain geometric and algebraic properties which is of independent
interest.
\end{abstract}
%
%---------------------------------------------------------------------
%---------------------------------------------------------------------
%---------------------------------------------------------------------
%---------------------------------------------------------------------
%---------------------------------------------------------------------
%---------------------------------------------------------------------
%---------------------------------------------------------------------
%---------------------------------------------------------------------
%
\section{Introduction}\label{section: intro}
Let $\fq$ be the finite field of $q$ elements and let $\cfq$ denote
its algebraic closure. For $K=\fq$ or $K=\cfq$, we denote by
$K[X_1,\ldots,X_r]$ the ring of polynomials in the indeterminates
$X_1,\ldots,X_r$ and coefficients in $K$. For any $d\in\N$, we
denote by $\mathcal{F}_d$ the set of elements of
$\fq[X_1,\ldots,X_r]$ of degree at most $d$. Given $1<s<r$ and
polynomials $F_1,\ldots, F_s\in\mathcal{F}_d$, in this paper we are
concerned with the problem of finding a common $\fq$-rational
solution, namely a solution with coefficients in $\fq$, of the {\em
underdetermined} system $F_1=0,\ldots,F_s=0$.

For $s=1$ and $r=2$, namely for the problem of finding
$\fq$-rational points of plane curves, this is considered in
\cite{GaShSi03}. For this purpose, the authors propose a combination
of specialization and unidimensional search, which is called {\em
Search on a vertical strip} (SVS). If the plane curve under
consideration is defined by a polynomial $F\in\fq[X_1,X_2]$, the
idea consists of considering successive specializations $F(a,X_2)$,
where the values $a\in\fq$ are randomly chosen, until an
$\fq$-rational point of the plane curve $\{F=0\}$ is obtained in a
``vertical strip'' $\{a\}\times\fq$. As there are efficient
algorithms for finding $\fq$-rational zeros of univariate
polynomials in $\fq[X]$, the critical point consists of determining
how many random choices must be done to have a ``good'' probability
of finding a vertical strip $\{a\}\times\fq$ with an $\fq$-rational
point of the plane curve under consideration. In \cite{GaShSi03}
this is done using an explicit version of the Weil estimate on the
number of $\fq$-rational points of absolutely irreducible (i.e.,
irreducible over $\cfq$) plane curves defined over $\fq$.

The case $s=1$ and arbitrary $r$, namely the problem of finding
$\fq$-rational points on hypersurfaces, is considered in
\cite{Matera10} and \cite{MaPePr17}. Given an absolutely irreducible
$F\in\fq[X_1,\ldots,X_r]$, in \cite{Matera10} it is shown that $\deg
F$ random choices of $\bfs a\in\fq^{r-1}$ suffice to have
probability greater than 1/2 of reaching a ``vertical strip''
$\{\bfs a\}\times \fq\subset\fq^r$ with $\fq$-rational points of the
hypersurface $\{F=0\}$ under consideration. As ``most'' polynomials
$F\in\fq[X_1,\ldots,X_r]$ of a given degree are absolutely
irreducible, this analysis comprises most of the elements of
$\fq[X_1,\ldots,X_r]$ (see \cite{GaViZi13} for explicit estimates on
the number of absolutely irreducible elements of
$\fq[X_1,\ldots,X_r]$). On the other hand, in \cite{MaPePr17} there
is an average-case analysis which considers all the elements of
$\fq[X_1,\ldots,X_r]$ of a given degree, assuming a uniform
distribution of inputs. Such analysis shows that, on average, three
random choices of $\bfs a\in\fq^{r-1}$ suffice. This is further
reflected by the fact that the average-case complexity of the
corresponding procedure is asymptotically at most three times the
cost of computing a specialization $F(\bfs a,X_r)$ and finding the
$\fq$-rational roots of  $F(\bfs a,X_r)$.

Now, for the general case $1<s<r$, we may distinguish two different
approaches. On one hand, we have the so-called ``rewriting''
methods, particularly Gr\"obner basis methods and variants such as
XL and MXL, which have been widely used for solving polynomial
systems over finite fields. In \cite{ArFaImKaSu04} and
\cite{AlCiFaPe12} it was shown that XL and MXL methods can be
described essentially as variants of the F4 algorithm for computing
Gr\"obner bases \cite{Faugere99}. Therefore, we shall consider
exclusively Gr\"obner basis algorithms. In particular,
\cite{BeFaPe09} considers the problem of finding $\fq$-rational
solutions of underdetermined systems over finite fields. An
``hybrid'' approach is applied, which consists on a mix of partial
specialization and zero-dimensional solving using Gr\"obner bases.
The amounts of partial specialization and zero-dimensional solving
is determined by a heuristic analysis, which allows the authors to
experimentally break several multivariate crytographic schemes
(TRMS, UOV, and others) with parameters assumed to be secure until
then.

On the other hand, we have ``geometric'' methods, which compute a
suitable description of the variety $V$ over $\cfq$ defined by the
system under consideration (see, e.g., \cite{HuWo99} or
\cite{CaMa06a}). This description consists of one or several
hypersurfaces, which represent the image of components of $V$ under
suitable linear projections, together with some local inverses of
these linear projections. After some random trials, which may be
seen as zero-dimensional linear sections ---or specializations--- of
the input system, an $\fq$-rational point of one of these
hypersurfaces is obtained, which yields an $\fq$-rational solution
of the input system. In both \cite{HuWo99} and \cite{CaMa06a}, the
proposed number of random trials is proportional to the degree of
the hypersurfaces under consideration.

In this paper we propose a systematic treatment of these strategies
which combine partial specialization with zero-dimensional solving
over $\fq$, together with an average-case analysis of the algorithm
under consideration. As the algorithm we propose is reminiscent of
the ones of \cite{GaShSi03} and \cite{MaPePr17}, for bivariate and
$r$--variate polynomials respectively, we call it ``Search on
Vertical Strips'' (SVS for short). The algorithm is the following.

\begin{algorithm}\label{algo: basic scheme} ${}$

Input: polynomials $\bfs F_s:=(F_1,\ldots,F_s)\in\mathcal{F}_d^s$.

Output: either a zero $\bfs x\in\fq^r$ of $\bfs F_s$, or
``failure''.

Set $i:=1$ and ${\tt State}:={\tt Failure}$

While $1\leq i\leq r-s+1$ and ${\tt State}={\tt Failure}$ do
\begin{enumerate}
  \item[] Choose at random $\bfs a_i\in\fq^{r-s}\setminus\{\bfs a_1\klk \bfs a_{i-1}\}$
%    \item[] Compute $f:=\gcd(F(\bfs a_i,X_r),X_r^q-X_r)$
  \item[] Find a common $\fq$-rational zero of $\bfs F_s(\bfs a_i,X_{r-s+1},\ldots,X_r)$
  \item[] If $\bfs x\in\fq^s$ is such a zero, then set ${\tt State}:={\tt Success}$
  \item[] $i:=i+1$
\end{enumerate}

End While

If ${\tt State}={\tt Success}$, then return $(\bfs a_i,\bfs x)$;
else, return ``{\tt Failure}''.
\end{algorithm}

Ignoring the cost of random generation of elements of $\fq^{r-s}$,
at the $i$th step of the main loop we compute the vector of
coefficients of the polynomials $F_j(\bfs a_i,X_{r-s+1},\ldots,
X_r)$ for $1\le j\le s$. Since an element of $\mathcal{F}_d$ has
$D:=\binom{d+r}{r}$ coefficients, the number of arithmetic
operations in $\fq$ required to compute such a vector is
$\mathcal{O}^\sim(sD)$, where the notation $\mathcal{O}^\sim$
ignores logarithmic factors. Throughout this paper, all asymptotic
estimates are valid for fixed $s$ and $r$, with $d\ll q$ growing to
infinity. Then a common $\fq$-rational zero of $\bfs F_s(\bfs
a_i,X_{r-s+1},\ldots,X_r)$ is computed, provided that there is such
a zero. This can be done using one's favorite zero-dimensional
solver over $\fq$. It may be based on Gr\"obner bases as in
\cite{BeFaPe09}, or it may be a ``geometric'' solver as the ones of
\cite{HuWo99}, \cite{CaMa06a} or \cite{HoLe21}. As a solver based on
Gr\"obner bases computations as F5 \cite{Faugere02}, and the
``Kronecker'' solver of \cite{CaMa06a} or \cite{HoLe21}, seem to be
the most efficient from a complexity point of view, we shall express
our costs in terms of the cost of these solvers exclusively. Denote
by $\tau(d,s,q)$ the maximum number of arithmetic operations
required to compute a specialization $\bfs F_s(\bfs a_i,-)$ and
solve a zero-dimensional system $\bfs F_s(\bfs a_i,-)=\bfs 0$ as
above with any of these solvers. Given a choice $\underline{\bfs
a}:=(\bfs a_1\klk \bfs a_{r-s+1})$ for the vertical strips to be
considered, the whole procedure requires
$\mathcal{O}^\sim\big(C_{\underline{\bfs a}}(\bfs F_s)\cdot
\tau(s,d,q)\big)$ arithmetic operations in $\fq$, where
$C_{\underline{\bfs a}}(\bfs F_s)$ is the least value of $i$ for
which the zero-dimensional solver under consideration succeeds to
compute a common solution in $\fq^s$ of the system $\bfs F_s(\bfs
a_i,-)=\bfs 0$.

The average-case analysis of Algorithm \ref{algo: basic scheme}
shows that this scheme, in which zero-dimensional searches are
performed only on a limited number of vertical strips, is efficient
and has a good probability of success. This is due to the fact that,
as more vertical strips are considered, the probability of finding
$\fq$-rational solutions decreases exponentially, and therefore the
additional computational effort becomes progressively useless.

The average-case analysis of Algorithm \ref{algo: basic scheme} is
heavily based on the properties of a specialization of a random
system $\bfs F_s(\bfs a,-)=\bfs 0$ with $\bfs F_s\in\mathcal{F}_d^s$
and $\bfs a\in\fq^{r-s}$. More precisely, we show that such a
specialization is defined by a {\em reduced regular sequence} in
$\fq[X_{r-s+1},\ldots,X_r]$ with high probability (see Section
\ref{subsec: reduced reg sequences} for the definition of a reduced
regular sequence). This is crucial from the complexity point of
view, as both Gr\"obner basis methods and the Kronecker solver are
shown to behave well on systems satisfying this condition (see,
e.g., \cite{BaFaSa15} or \cite[Chapter 26]{BoChGiLeLeSaSc17} for
Gr\"obner bases and \cite{CaMa06a} or \cite{HoLe21} for the
Kronecker solver).
%Further, we show that a random system $\bfs F_s$
%defines an absolutely irreducible variety.
We have the following result (see Corollary \ref{coro: probability 1
specializ and red reg sequence}).
\begin{theorem}
\label{th: probability reduced reg sequence and abs irred - intro}
Let $\bfs a\in\fq^{r-s}$, and let $N$ be the number of $\bfs
F_s:=(F_1 \klk F_s) \in \mathcal{F}_d^s$ such that $\bfs F_s(\bfs
a,-):=\big(F_1(\bfs a,-),\ldots,F_s(\bfs a,-)\big)$ forms a reduced
regular sequence. If $q
>2d^s(d+1)^s$, then
$$1-\frac{2d^s(d+1)^s}{q} \leq \frac{N}{|\mathcal{F}_d^s|} \leq 1.$$
\end{theorem}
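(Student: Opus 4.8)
The plan is to count the complement: the number of $\bfs F_s\in\mathcal{F}_d^s$ for which $\bfs F_s(\bfs a,-)$ fails to be a reduced regular sequence. I want to bound this "bad set" by $2d^s(d+1)^s/q\cdot|\mathcal{F}_d^s|$, which would give the lower bound; the upper bound $N/|\mathcal{F}_d^s|\le 1$ is trivial. Since specialization at a fixed $\bfs a$ is an $\fq$-linear surjection $\mathcal{F}_d\to\mathcal{F}_d^{(r-s+1)\text{ vars}}$ (here onto polynomials in $X_{r-s+1},\ldots,X_r$ of degree $\le d$), the uniform distribution on $\mathcal{F}_d^s$ pushes forward to the uniform distribution on tuples $\bfs G_s:=(G_1,\ldots,G_s)$ of specialized polynomials. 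So the ratio $N/|\mathcal{F}_d^s|$ equals the proportion of $s$-tuples $\bfs G_s$ in $s$ variables of degree $\le d$ that form a reduced regular sequence, and it suffices to bound the proportion of such tuples that are NOT reduced regular sequences by $2d^s(d+1)^s/q$.

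**Next I would** unpack the definition of reduced regular sequence (given in the referenced Section on reduced regular sequences) into its constituent failure modes and handle each by a parameter-space/incidence argument. A regular sequence condition typically requires that at each stage $G_1,\ldots,G_i$ cuts out a variety of the expected codimension $i$ (equivalently, the successive quotients have no embedded behavior), plus the "reduced" condition requiring the ideal to be radical, or equivalently that the variety is smooth / the Jacobian has full rank along it. The standard technique is to express each failure as the condition that a certain resultant, discriminant, or subresultant-type polynomial in the coefficients of $\bfs G_s$ vanishes. Because these are polynomials of controlled degree in the coefficients, the Schwartz–Zippel lemma bounds the number of coefficient vectors on which they vanish by (degree)$/q$ times the total count. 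The combinatorial bookkeeping—multiplying the per-stage degree contributions, which I expect to produce factors like $d^s$ and $(d+1)^s$—is where the explicit constant $2d^s(d+1)^s$ should emerge.

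**The main obstacle will be** pinning down the precise polynomial whose vanishing detects failure of the reduced regular sequence property and controlling its degree in the coefficients of $\bfs G_s$ sharply enough to land the constant $2d^s(d+1)^s$ rather than something weaker. The "regular sequence" part can be encoded via the nonvanishing of certain leading coefficients or the dimension-drop at each successive hypersurface intersection, but the "reduced" (radical/smoothness) part is more delicate: it amounts to a discriminant-type condition on the whole system, and bounding the degree of the associated discriminant as a polynomial in the $G_i$-coefficients requires care. I would expect to build an auxiliary polynomial $P(\bfs c)\in\fq[\bfs c]$ in the coefficient variables $\bfs c$ whose nonvanishing guarantees both properties simultaneously, show $\deg P\le 2d^s(d+1)^s$, verify $P\not\equiv 0$ (so that the generic tuple works, using characteristic-free or large-$q$ genericity), and then apply Schwartz–Zippel. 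The hypothesis $q>2d^s(d+1)^s$ ensures the bound is meaningful (the probability of failure stays below $1$), and it is exactly the threshold below which the argument degenerates.
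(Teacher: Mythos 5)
Your overall architecture matches the paper's: reduce to showing that the ``bad'' tuples lie on a hypersurface of degree at most $2d^s(d+1)^s$ in coefficient space, then bound its number of $\fq$-points by $2d^s(d+1)^s q^{D-1}$ (your Schwartz--Zippel step is exactly the paper's use of the bound $|V(\fq)|\le\delta q^{n}$ for hypersurfaces). The preliminary reduction to $s$-variable tuples via the linear surjectivity of specialization is fine (modulo a typo: the target is polynomials in $s$ variables, not $r-s+1$), though the paper does not need it. The genuine gap is that you never actually produce the polynomial $\bfs P_S$ or its degree bound, and the route you sketch --- per-stage resultants/discriminants for regularity plus a discriminant-type condition for radicality --- is unlikely to yield the constant $2d^s(d+1)^s$ and is delicate in positive characteristic, where discriminants can vanish identically. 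You are also imprecise in equating ``radical'' with ``the Jacobian has full rank along the variety'': what is needed (and what suffices, by \cite[Theorem 18.15]{Eisenbud95}) is generic smoothness on each component of a complete intersection, not smoothness everywhere.

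The paper's two missing ingredients are worth internalizing. First, Lemma \ref{lemma: Bezout implies H} collapses \emph{all} failure modes into a single numerical condition: if $\deg V(G_1,\ldots,G_s)=d^s$ (the B\'ezout maximum), then $G_1,\ldots,G_s$ is automatically a reduced regular sequence --- a zero divisor at any stage would force a strict degree drop, and radicality follows from generic transversality plus the Jacobian criterion. This is what makes a \emph{single} polynomial of controlled degree possible. Second, the polynomial itself comes not from resultants but from the incidence variety $W_{\bfs a}:=\{(\bfs F_s,\bfs x):\bfs F_s(\bfs a,\bfs x)=0\}$ with its degree-$d^s$ projection $\pi_{\bfs a}$ onto coefficient space: the explicit witness $\bfs F_s=(f(X_{r-s+1}),\ldots,f(X_r))$ with $f$ irreducible of degree $d$ shows both that $\deg\pi_{\bfs a}=d^s$ and that $\pi_{\bfs a}$ is generically unramified (hence the relevant field extension is separable, sidestepping the characteristic-$p$ issue), and \cite[Proposition 3]{Schost03} then yields $\bfs P_S$ with $\deg\bfs P_S\le 2\deg\pi_{\bfs a}\cdot\deg W_{\bfs a}\le 2d^s(d+1)^s$, whose nonvanishing forces the fiber to have exactly $d^s$ points. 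Without these two steps your plan is a plausible outline but not a proof.
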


Then we analyze the number of partial specializations which are
necessary to reach to a vertical strip where the system under
consideration has an $\fq$-rational solution. It turns out that the
probability that a high number $h$ of specializations are required
decreases exponentially with $h$. More precisely, we have the
following result (see Theorems \ref{th: probability 1
specialization} and \ref{th: probability of h specializations} and
Corollaries \ref{coro: probability 1 specialization - asymptotic}
and \ref{coro: probability of h specializations - asymptotic} for
precise statements).
\begin{theorem}\label{th: probability of h specializations - intro}
For any $m\in\N$ denote $\mu_m:=\sum_{j=1}^m\frac{(-1)^{j-1}}{j!}$.
Let $C$ be the random variable which counts the number of
specializations required to obtain a vertical strip of a random
system $\bfs F_s=\bfs 0$ as above with an $\fq$-rational solution.
Suppose that $q^s>\max\{d,6\}$ and $1<h\le r-s+1$. We have
\begin{align*}
\big|P[C=h]-\mu_d(1-\mu_d)^{h-1}\big| &\le
\mbox{$\frac{1}{(d+1)!}$}\big(e^{h-1}+\mbox{$\frac{1}{2}$}\big)
+\mathcal{O}\big(\mbox{$\frac{1}{q}$}\big)\ \textrm{ for }d\textrm{
odd},\\[1ex]
\big|P[C=h]-\mu_{d+1}(1-\mu_{d+1})^{h-1}\big| &\le
\mbox{$\frac{1}{(d+1)!}$}\big(e^{h-1}+\mbox{$\frac{1}{2}$}\big)
+\mathcal{O}\big(\mbox{$\frac{1}{q}$}\big)\ \textrm{ for }d\textrm{
even},
\end{align*}
where $P$ denotes probability and $e$ is the basis of the natural
logarithm.
\end{theorem}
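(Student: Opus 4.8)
The target quantity $\mu_d(1-\mu_d)^{h-1}$ (respectively $\mu_{d+1}(1-\mu_{d+1})^{h-1}$) is precisely the probability mass function, evaluated at $h$, of a geometric random variable with success probability $p=\mu_d$ (respectively $p=\mu_{d+1}$). The plan is therefore to show that $C$ is, up to the stated error, geometrically distributed: the per-strip events ``the specialized system $\bfs F_s(\bfs a_i,-)=\bfs 0$ has an $\fq$-rational solution'' behave like independent Bernoulli trials with a common success probability $p$ satisfying $|p-\mu_d|$ (respectively $|p-\mu_{d+1}|$) of size at most $\frac{1}{(d+1)!}+\mathcal O(\frac1q)$. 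Writing $Z_i$ for the number of $\fq$-rational solutions of $\bfs F_s(\bfs a_i,-)=\bfs 0$, where the probability is over a uniform $\bfs F_s\in\mathcal F_d^s$ and the distinct random choices $\bfs a_1,\dots,\bfs a_h\in\fq^{r-s}$, I would first record the identity
\[
P[C=h]=P[Z_1=\cdots=Z_{h-1}=0]-P[Z_1=\cdots=Z_h=0],
\]
since $\{C=h\}$ is the event that the first $h-1$ strips fail and the $h$-th succeeds.

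For the single strip I would invoke Corollary \ref{coro: probability 1 specialization - asymptotic}, whose proof rests on the method of factorial moments. One has $P[Z_1=0]=\sum_{k\ge0}\frac{(-1)^k}{k!}m_{(k)}$, where the $k$-th factorial moment $m_{(k)}$ is the sum, over ordered $k$-tuples of distinct points of $\fq^s$, of the probability that all are common zeros of $\bfs F_s(\bfs a_1,-)$. Each such probability equals $q^{-sk}$ as soon as the corresponding evaluation functionals are linearly independent on $\mathcal F_d$, which holds while $k\le d$; hence $m_{(k)}=1+\mathcal O(\frac1q)$ in that range, and $P[Z_1\ge1]=\mu_d+(\text{tail})+\mathcal O(\frac1q)$. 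The reduced regular sequence property (Theorem \ref{th: probability reduced reg sequence and abs irred - intro}, via Corollary \ref{coro: probability 1 specializ and red reg sequence}) guarantees that the solution count is governed by the expected geometry, so the deviation beyond order $d$ is bounded by the first omitted term $\frac{1}{(d+1)!}$; the parity of $d$ selects which side of the alternating truncation $\mu_d$ or $\mu_{d+1}$ provides the sharper center.

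For the multi-strip probabilities I would extend the same factorial-moment computation to simultaneous vanishing across several strips: the conditions $\bfs F_s(\bfs a_i,\bfs x)=\bfs 0$ at distinct points $(\bfs a_i,\bfs x)\in\fq^r$ are again linear in the coefficients of $\bfs F_s$ and independent up to the analogous degree bound, which makes $P[Z_1=\cdots=Z_k=0]$ factor as $(1-p)^k$ up to errors and yields the geometric form. Replacing the true per-strip probabilities $p_i$ by the common value $\mu_d$ in the product $\prod_{i<h}(1-p_i)$ compounds the single-strip error across $h-1$ factors; bounding the telescoping difference through $\prod_i(1+\varepsilon_i)\le e^{\sum_i\varepsilon_i}$ produces a factor growing at most like $e^{h-1}$, while the outstanding factor contributes the additive $\frac12$, giving exactly $\frac{1}{(d+1)!}\bigl(e^{h-1}+\frac12\bigr)+\mathcal O(\frac1q)$. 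This is the content of Theorem \ref{th: probability of h specializations} and Corollary \ref{coro: probability of h specializations - asymptotic}, and the intro statement follows by assembling them.

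The main obstacle, which I expect to absorb most of the effort, is justifying the near-independence of the strips. Exact independence fails: for each fixed exponent the top-degree-in-$(X_{r-s+1},\dots,X_r)$ form of the specialization $\bfs F_s(\bfs a_i,-)$ is the corresponding coefficient of $\bfs F_s$ and is \emph{independent of} $\bfs a_i$, so the leading forms coincide across all strips and the events $\{Z_i=0\}$ are genuinely dependent. The delicate part is to show that this shared leading structure perturbs the joint probabilities only by $\mathcal O(\frac1q)$ --- for instance by conditioning on the common leading form and showing the solution counts are controlled by the lower-order, genuinely independent data --- and simultaneously to pin the factorial-moment matching to exactly order $d$, so that the parity-dependent centers and the constant $\frac{1}{(d+1)!}$ emerge correctly. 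The without-replacement sampling of the $\bfs a_i$ introduces a further $\mathcal O(\frac1q)$ discrepancy that is likewise absorbed into the error term.
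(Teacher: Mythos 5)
Your overall skeleton --- Bonferroni/factorial-moment truncation at order $d$ for a single strip, a reduction of $P[C=h]$ to joint probabilities over several strips, and an $\mathcal{O}(1/q)$ loss from degenerate configurations --- matches the paper's, but the step you yourself flag as ``the main obstacle'' is left unresolved, and the mechanism you sketch for it is not the one that works. The paper does not control the cross-strip dependence by conditioning on the shared leading form, nor does it use the reduced-regular-sequence property at all here (Theorem \ref{th: probability reduced reg sequence and abs irred - intro} plays no role in the distribution of $C$; it only enters the complexity analysis of the zero-dimensional solver, so invoking it to bound the truncation tail is a misattribution --- that tail is controlled purely by the alternating Bonferroni inequalities). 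The decisive missing ingredient is Proposition \ref{prop: ranks generalized Vandermonde}: for tuples $(\bfs a_1,\ldots,\bfs a_h)$ such that the $h\times h$ matrix ${\sf M}$, built from a column of ones and the first $h-1$ coordinates of the $\bfs a_i$, is invertible, \emph{any} choice of at most $d$ evaluation points in each strip imposes exactly $j_1+\cdots+j_h$ linearly independent conditions on $\mathcal{F}_d$; this is proved by an explicit factorization of the evaluation matrix into a block-diagonal of Vandermonde matrices times a block matrix governed by ${\sf M}$. This is where the hypothesis $h\le r-s+1$ enters (one needs $h-1\le r-s$ available coordinates to form ${\sf M}$), and it makes the joint probabilities factor \emph{exactly} for such tuples. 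The only error sources are then the Bonferroni truncation (contributing $t_{d+1}\le\frac{1}{(d+1)!}$, with the parity of $d$ deciding whether $s_d$ or $s_{d+1}$ is the center) and the $\mathcal{O}(1/q)$ proportion of tuples with ${\sf M}$ singular (Remark \ref{rem: cardinality elements bad vandermonde rank}). Your ``shared leading form'' worry is correct as a statement that the events $\{Z_i=0\}$ are genuinely dependent, but it is immaterial to the proof: that dependence only manifests beyond $d+1$ points per strip, precisely the range the truncation discards, so no conditioning argument is needed or attempted.

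A second, smaller inaccuracy: the factor $e^{h-1}$ does not arise from compounding per-strip errors via $\prod_i(1+\varepsilon_i)\le e^{\sum_i\varepsilon_i}$. In the paper it is the bound $(1+s_{d+1}^+)^{h-1}\le e^{h-1}$ on the total mass of the inclusion--exclusion over subsets of the first $h-1$ strips in Theorem \ref{th: estimate probability S_{h,a}^*}, where $s_{d+1}^+=\sum_{i=1}^{d+1}\binom{q^s}{i}q^{-si}\le e-1$; and the companion additive $\frac12$ requires the nested upper/lower bounds $U_k,L_k$ of Lemma \ref{lemma: recursive relation UL} together with the spectral estimate for the matrix ${\sf B}_d$ in Proposition \ref{prop: bound for S_k}. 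Neither of these follows from your telescoping bound. Until you supply the cross-strip rank statement and these quantitative recursions, the argument does not close.
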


Observe that $\mu_d\approx 1-e^{-1}= 0.6321\ldots$ for large $d$. We
remark that the quantity $\mu_d$ arises also in connection with a
classical combinatorial notion over finite fields, that of the value
set of univariate polynomials (cf. \cite{LiNi83}, \cite{MuPa13}). As
$\mu_d(1-\mu_d)\approx 0.2325\ldots$ for large $d>s$ and $q^s>d$, we
may paraphrase Theorem \ref{th: probability of h specializations -
intro} as saying that 2 specializations will suffice with high
probability to obtain a vertical strip with an $\fq$-rational
solution of the system under consideration. We observe that the
probabilistic algorithms of \cite{HuWo99}, \cite{CaMa06a} and
\cite{Matera10} propose a number of searches of order
$\mathcal{O}(d^s)$ to achieve a probability of success greater than
1/2, while that of \cite{BeFaPe09} proposes $q$ specializations. Our
result suggests that these analyzes are somewhat pessimistic.

Finally, we analyze the average-case complexity and probability of
success of Algorithm \ref{algo: basic scheme} (see Theorems \ref{th:
average-case compl} and \ref{th: probability success} for precise
statements).
\begin{theorem}
\label{th: average-case compl - intro} Let $h^*:={r-s+1}$. For
$q>2d^s(d+1)^s$ and $d>s$, the average--case complexity $E$ of
Algorithm \ref{algo: basic scheme} is bounded in the following way:
$$
E \leq \left\{\begin{array}{l} \tau(d,s,q) \Big(\mu_d^{-1}
+h^*(1-\mu_d)^{h^*}+\mbox{$\frac{3{h^*}e^{h^*}}{(d+1)!}$}+\mathcal{O}\big(\mbox{$\frac{r
(d+1)^{2r}}{q}$}\big)\Big)\ \textrm{ for }d\text{ odd},\\
\tau(d,s,q) \Big(\mu_{d+1}^{-1}
+h^*(1-\mu_{d+1})^{h^*}+\mbox{$\frac{3{h^*}e^{h^*}}{(d+1)!}$}+\mathcal{O}\big(\mbox{$\frac{r
(d+1)^{2r}}{q}$}\big)\Big)\ \textrm{ for }d\text{ even},
\end{array}\right.
$$
where $\tau(d,s,q)$ is the cost of the search in a vertical strip
and the constant underlying the $\mathcal{O}$-notation is
independent of $r$, $s$, $d$ and $q$. Further, the probability $P$
of failure of Algorithm \ref{algo: basic scheme} can be bounded as
follows:
\begin{align*}
\big|P-(1-\mu_d)^{h^*}\big| &\le \mbox{$\frac{e^{h^*}}{(d+1)!}$}
+\mathcal{O}(\mbox{$\frac{r(d+1)^{2r}}{q}$})\ \textrm{ for
}d\text{ odd},\\
\big|P-(1-\mu_{d+1})^{h^*}\big| &\le \mbox{$\frac{e^{h^*}}{(d+1)!}$}
+\mathcal{O}(\mbox{$\frac{r(d+1)^{2r}}{q}$})\ \textrm{ for }d\text{
even},\end{align*}
where the constant underlying the $\mathcal{O}$-notation is
independent of $r$, $s$, $d$ and $q$.
\end{theorem}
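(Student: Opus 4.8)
The plan is to derive both bounds directly from the per-specialization estimate of Theorem~\ref{th: probability of h specializations - intro}. Throughout, write $\mu:=\mu_d$ when $d$ is odd and $\mu:=\mu_{d+1}$ when $d$ is even, and set $\epsilon_h:=P[C=h]-\mu(1-\mu)^{h-1}$, so that Theorem~\ref{th: probability of h specializations - intro} reads $|\epsilon_h|\le\frac{1}{(d+1)!}\big(e^{h-1}+\tfrac12\big)+\mathcal{O}(1/q)$ for $2\le h\le h^*$, the remaining case $h=1$ being supplied by the one-specialization estimate (Theorem~\ref{th: probability 1 specialization}), which fits the same shape since $e^{0}=1$. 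Each pass through the main loop (forming a specialization and running the zero-dimensional solver) costs at most $\tau(d,s,q)$ operations, and the algorithm performs $N:=\min(C,h^*)$ such passes: it stops at the first success, or halts after $h^*$ trials on failure. Hence the average-case complexity satisfies $E\le\tau(d,s,q)\cdot\mathbb{E}[N]$, where both $\mathbb{E}$ and $P$ are taken over the uniform choice of input $\bfs F_s\in\mathcal{F}_d^s$ and over the algorithm's random choices $\bfs a_1,\ldots,\bfs a_{h^*}$.

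First I would split the expected number of passes according to success or failure,
$$\mathbb{E}[N]=\sum_{h=1}^{h^*}h\,P[C=h]+h^*P_{\mathrm{fail}},\qquad P_{\mathrm{fail}}:=1-\sum_{h=1}^{h^*}P[C=h].$$
For the success term I would enlarge the range and bound the partial sum by the full geometric series, $\sum_{h=1}^{h^*}h\,\mu(1-\mu)^{h-1}\le\sum_{h\ge1}h\,\mu(1-\mu)^{h-1}=\mu^{-1}$, collecting the discrepancies into the error $\sum_{h=1}^{h^*}h\,|\epsilon_h|$. For the failure term I would invoke the bound on $P_{\mathrm{fail}}$ proved below, which contributes $h^*(1-\mu)^{h^*}$ up to error. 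This already isolates the two announced main terms $\mu^{-1}$ and $h^*(1-\mu)^{h^*}$.

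Next I would establish the failure estimate, which also feeds the complexity bound. Because $(1-\mu)^{h^*}=1-\sum_{h=1}^{h^*}\mu(1-\mu)^{h-1}$, subtracting the definition of $P_{\mathrm{fail}}$ gives $P_{\mathrm{fail}}-(1-\mu)^{h^*}=-\sum_{h=1}^{h^*}\epsilon_h$, whence $|P_{\mathrm{fail}}-(1-\mu)^{h^*}|\le\sum_{h=1}^{h^*}|\epsilon_h|$. Summing the per-step bound, the factorial part gives $\frac{1}{(d+1)!}\sum_{h=1}^{h^*}\big(e^{h-1}+\tfrac12\big)\le\frac{e^{h^*}}{(d+1)!}$, using $\sum_{h=1}^{h^*}e^{h-1}=\frac{e^{h^*}-1}{e-1}$ and the fact that this, together with the $\tfrac{h^*}{2}$ term, stays below $e^{h^*}$; the at most $h^*\le r$ copies of the $\mathcal{O}(1/q)$ term, whose explicit form carries a factor of order $(d+1)^{2r}$ inherited from the number $D=\binom{d+r}{r}$ of coefficients, contribute $\mathcal{O}(r(d+1)^{2r}/q)$. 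This yields the stated failure bound.

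Finally I would assemble the complexity estimate via $\mathbb{E}[N]\le\mu^{-1}+h^*(1-\mu)^{h^*}+\sum_{h=1}^{h^*}h\,|\epsilon_h|+h^*|P_{\mathrm{fail}}-(1-\mu)^{h^*}|$ and multiply by $\tau(d,s,q)$. The dominant error is the weighted factorial sum $\frac{1}{(d+1)!}\sum_{h=1}^{h^*}h\,e^{h-1}$, which is at most a constant multiple of its largest term since consecutive summands grow by a factor close to $e$; explicitly $\sum_{h=1}^{h^*}h\,e^{h-1}\le 3h^*e^{h^*}$. Adding the failure-term error $h^*|P_{\mathrm{fail}}-(1-\mu)^{h^*}|\le h^*e^{h^*}/(d+1)!$ and the negligible $\tfrac12$-contributions keeps the total within $\frac{3h^*e^{h^*}}{(d+1)!}$, while the $1/q$ pieces again collapse to $\mathcal{O}(r(d+1)^{2r}/q)$ for fixed $r,s$. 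The main obstacle is not any individual estimate but the bookkeeping that tames the exponentially growing per-step errors $e^{h-1}$ after weighting by $h$ and summing: this is precisely where the factorial $1/(d+1)!$ and the boundedness of $h^*=r-s+1$ for fixed $r,s$ are indispensable, and where one must read off the explicit $(d+1)^{2r}/q$ shape of the arithmetic error from the precise forms behind Theorem~\ref{th: probability of h specializations - intro}.
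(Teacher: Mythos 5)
Your handling of the ``geometric'' half of the argument is essentially the paper's: the telescoping identity $P[C>h^*]-(1-\mu)^{h^*}=-\sum_{h\le h^*}\epsilon_h$, the bound $\sum_h h\mu(1-\mu)^{h-1}\le\mu^{-1}$, and the absorption of $\frac{1}{(d+1)!}\sum_h(e^{h-1}+\frac12)$ into $\frac{e^{h^*}}{(d+1)!}$ all match Corollary \ref{coro: prob C>h^*} and the corresponding computation in the proof of Theorem \ref{th: average-case compl}. But there is a genuine gap: you identify ``the algorithm succeeds at strip $i$'' with ``$\bfs F_s(\bfs a_i,-)=\bfs 0$ has a zero in $\fq^s$'', i.e.\ you take the number of passes to be $\min(C,h^*)$ and the failure event to be $\{C>h^*\}$. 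Neither identification is valid. The zero-dimensional solver can fail on a strip that does contain a rational solution, for two reasons the paper must (and does) control: the specialization $\bfs F_s(\bfs a_{i^*},-)$ may fail condition $({\sf H})$ (so the per-strip cost bound $\tau(d,s,q)$ is only preserved by truncating the search, at the price of possibly missing the solution), and the solver's own random parameters $\omega$ may be bad. When either happens the algorithm keeps searching past strip $i^*$, so the number of passes can be $h^*$ even when $C\le h^*$, and the algorithm can output ``failure'' even when $C\le h^*$. This is exactly what the decomposition $\FF_{h^*}\times\mathcal F_d^s\times\Omega_d={\sf C}\cup{\sf D}\cup{\sf E}\cup{\sf F}$ in the proof of Theorem \ref{th: average-case compl}, and Lemma \ref{lemma: prob C_h=h and S_h} together with the Fr\'echet inequalities in Theorem \ref{th: probability success}, are for; your proposal never invokes Theorem \ref{th: probability reduced reg sequence and abs irred - intro} (equivalently Corollary \ref{coro: probability 1 specializ and red reg sequence}) at all, and indeed never uses the hypothesis $q>2d^s(d+1)^s$ --- a telltale sign that something essential is missing.

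Relatedly, your explanation of the error term is wrong: the factor $(d+1)^{2r}$ does not come from the number $\binom{d+r}{r}$ of coefficients, but from the bound $h^*\cdot\frac{2d^s(d+1)^s}{q}\le\frac{r(d+1)^{2r}}{q}$ on the probability that some relevant specialization fails to be a reduced regular sequence (Proposition \ref{prop: condition red reg sequence of specialization} and Corollary \ref{coro: probability 1 specializ and red reg sequence}), together with the $\mathcal O(q^{-1})$ failure probability of the solver. One could repair your argument by observing that the symmetric difference between your events and the algorithm's actual success/failure events has probability $\mathcal O(h^*d^s(d+1)^s/q)+\mathcal O(q^{-1})$ and is therefore absorbed by the stated $\mathcal O(r(d+1)^{2r}/q)$, but that observation \emph{is} the missing content, and it requires precisely the results of Section \ref{section: probability reg complete inters} that your proposal bypasses. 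A minor further point: summing $h\,P_h[C_h=h]$ over $h$ requires the consistency of the probability spaces $\FF_h\times\mathcal F_d^s$ for varying $h$ (Lemma \ref{lemma: consistency conditions}), which you implicitly assume.
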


As $1/\mu_d\approx 1.58\ldots$ and $h^*(1-\mu_d)^{h^*}<0.76$, this
result suggests that, on average, at most $1/\mu_d+0.76\approx
2.34\ldots$ vertical strips must be searched to obtain an
$\fq$-rational zero of the polynomial system under consideration,
for large $d$ and $q$ satisfying the hypotheses of Theorem \ref{th:
average-case compl - intro}.

The paper is organized as follows. In Section \ref{section:
notation, notations} we briefly recall the notions and notations of
algebraic geometry and finite fields we use. Section \ref{section:
probability reg complete inters} is devoted to estimate the
probability that a specialization of a random system forms a reduced
regular sequence, proving Theorem \ref{th: probability reduced reg
sequence and abs irred - intro}. In Section \ref{section: number of
specializ for having fq-solutions} we estimate the expected number
of vertical strips to be searched to have an $\fq$-rational solution
of the system under consideration, showing Theorem \ref{th:
probability of h specializations - intro}. Finally, in Section
\ref{section: analysis of SVS algorithm} we apply the results of
Sections \ref{section: probability reg complete inters} and
\ref{section: number of specializ for having fq-solutions} to
establish Theorem \ref{th: average-case compl - intro}.
%
%----------------------------------------------------------------------------
%----------------------------------------------------------------------------
%----------------------------------------------------------------------------
%----------------------------------------------------------------------------
%----------------------------------------------------------------------------
%----------------------------------------------------------------------------
%----------------------------------------------------------------------------
%----------------------------------------------------------------------------
%
\section{Preliminaries}\label{section: notation, notations}
We use standard notions and notations of commutative algebra and
algebraic geometry as can be found in, e.g., \cite{Harris92},
\cite{Kunz85} or \cite{Shafarevich94}.

Let $\K$ be any of the fields $\fq$ or $\cfq$. We denote by $\A^r$
the $r$--dimensional affine space $\cfq{\!}^r$ and by $\Pp^r$ the
$r$--dimensional projective space over $\cfq$. %Both spaces
%are endowed with their respective Zariski topologies over $K$, for
%which a closed set is the zero locus of a set of polynomials of
%$K[X_1,\ldots, X_{n}]$, or of a set of homogeneous polynomials of
%$K[X_0,\ldots, X_{n}]$.
%
By a {\em projective variety defined over} $K$ (or a projective
$K$--variety for short) we mean a subset $V\subset \Pp^r$ of common
zeros of homogeneous polynomials $F_1,\ldots, F_m \in K[X_0 ,\ldots,
X_r]$. Correspondingly, an {\em affine variety of $\A^r$ defined
over} $K$ (or an affine $K$--variety) is the set of common zeros in
$\A^r$ of polynomials $F_1,\ldots, F_{m} \in
K[X_1,\ldots, X_r]$. %We think a projective or affine
%$K$--variety to be equipped with the induced Zariski topology.
We shall frequently denote by $V(\bfs F_m)=V(F_1\klk F_m)$ or
$\{\bfs F_m=\bfs 0\}=\{F_1=0\klk F_m=0\}$ the affine or projective
$K$--variety consisting of the common zeros of the polynomials $\bfs
F_m:=(F_1\klk F_m)$.

In what follows, unless otherwise stated, all results referring to
varieties in general should be understood as valid for both
projective and affine varieties. A $K$--variety $V$ is $K$--{\em
irreducible} if it cannot be expressed as a finite union of proper
$K$--subvarieties of $V$. Further, $V$ is {\em absolutely
irreducible} if it is $\cfq$--irreducible as a $\cfq$--variety. Any
$K$--variety $V$ can be expressed as an irredundant union
$V=\mathcal{C}_1\cup \cdots\cup\mathcal{C}_s$ of irreducible
(absolutely irreducible) $K$--varieties, unique up to reordering,
which are called the {\em irreducible} ({\em absolutely
irreducible}) $K$--{\em components} of $V$.

%The set $V(\fq):=V\cap \fq^r$ is the set of {\sf $q$--rational
%points} of $V$. Studying the number of elements of $V(\fq)$ is a
%classical problem. The existence of $q$--rational points depends
%upon many circumstances concerning the geometry of the underlying
%variety.

For a $K$--variety $V$ contained in $\Pp^r$ or $\A^r$, we denote by
$I(V)$ its {\em defining ideal}, namely the set of polynomials of
$K[X_0,\ldots, X_r]$, or of $K[X_1,\ldots, X_r]$, vanishing on $V$.
The {\em coordinate ring} $K[V]$ of $V$ is the quotient ring
$K[X_0,\ldots,X_r]/I(V)$ or $K[X_1,\ldots,X_r]/I(V)$. The {\em
dimension} $\dim V$ of $V$ is the length $n$ of the longest chain
$V_0\varsubsetneq V_1 \varsubsetneq\cdots \varsubsetneq V_n$ of
nonempty irreducible $K$--varieties contained in $V$. We say that
$V$ has {\em pure dimension} $n$ (or simply it is {\em
equidimensional}) if all the irreducible $K$--components of $V$ are
of dimension $n$.

A $K$--variety in $\Pp^r$ or $\A^r$ of pure dimension $r-1$ is
called a $K$--{\em hypersurface}. A $K$--hypersurface in $\Pp^r$ (or
$\A^r$) is the set of zeros of a single nonzero polynomial of
$K[X_0\klk X_r]$ (or of $K[X_1\klk X_r]$).
%
%----------------------------------------------------------------
%----------------------------------------------------------------
%
%\paragraph{Degree}
%

The {\em degree} $\deg V$ of an irreducible $K$-variety $V$ is the
maximum number of points lying in the intersection of $V$ with a
linear space $L$ of codimension $\dim V$, for which $V\cap L$ is a
finite set. More generally, following \cite{Heintz83} (see also
\cite{Fulton84}), if $V=\mathcal{C}_1\cup\cdots\cup \mathcal{C}_s$
is the decomposition of $V$ into irreducible $K$--components, we
define the degree of $V$ as
$$\deg V:=\sum_{i=1}^s\deg \mathcal{C}_i.$$
%
%The degree of a $K$--hypersurface $V$ is the degree of a polynomial
%of minimal degree defining $V$. Another property is that the degree
%of a dense open subset of a $K$--variety $V$ is equal to the degree
%of $V$.
%
We shall use the following {\em B\'ezout inequality} (see
\cite{Heintz83}, \cite{Fulton84}, \cite{Vogel84}): if $V$ and $W$
are $K$--varieties of the same ambient space, then
\begin{equation}\label{eq: Bezout}
\deg (V\cap W)\le \deg V \cdot \deg W.
\end{equation}

Let $V\subset\A^r$ be a $K$--variety and $I(V)\subset K[X_1,\ldots,
X_r]$ its defining ideal. Let $\bfs x$ be a point of $V$. The {\em
dimension} $\dim_{\bfs x}V$ {\em of} $V$ {\em at} $\bfs x$ is the
maximum of the dimensions of the irreducible $K$--components of $V$
that contain $\bfs x$. If $I(V)=(F_1,\ldots, F_m)$, the {\em tangent
space} $T_{\bfs x}V$ {\em to $V$ at $\bfs x$} is the kernel of the
Jacobian matrix $(\partial F_i/\partial X_j)_{1\le i\le m,1\le j\le
r}(\bfs x)$ of the polynomials $F_1,\ldots, F_m$ with respect to
$X_1,\ldots, X_r$ at $\bfs x$. We have (see, e.g., \cite[page
94]{Shafarevich94})
$$\dim T_{\bfs x}V\ge \dim_{\bfs x}V.$$
The point $\bfs x$ is {\em regular} if $\dim T_{\bfs x}V=\dim_xV$.
Otherwise, the point $x$ is called {\em singular}. The set of
singular points of $V$ is the {\em singular locus}
$\mathrm{Sing}(V)$ of $V$; a variety is called {\em nonsingular} if
its singular locus is empty. For a projective variety, the concepts
of tangent space, regular and singular point can be defined by
considering an affine neighborhood of the point under consideration.
%
%----------------------------------------------------------------
%----------------------------------------------------------------
%
%\paragraph{Mappings}

%Regular maps will be represented by solid arrows $\to$, while
%partial rational maps will be indicated with dashed arrows
%$\dashrightarrow$.
Let $V$ and $W$ be irreducible affine $K$--varieties of the same
dimension and $f:V\to W$ a regular map for which
$\overline{f(V)}=W$, where $\overline{f(V)}$ is the closure of
$f(V)$ with respect to the Zariski topology of $W$. Such a map is
called {\em dominant}. Then $f$ induces a ring extension
$K[W]\hookrightarrow K[V]$ by composition with $f$, and thus a field
extension $K(W)\hookrightarrow K(V)$ of the fraction fields $K(W)$
and $K(V)$ of $K[V]$ and $K[W]$ respectively. We define the {\em
degree} $\deg f$ of $f$ as the degree $[K(V):K(W)]$ of the (finite)
field extension $K(W)\hookrightarrow K(W)$. For $\bfs y\in W$ and
$\bfs x\in f^{-1}(\bfs y)$, we say that $f$ is {\em unramified} at
$\bfs x$ if the differential mapping $d_{\bfs x}f \colon T_{\bfs
x}V\to T_{\bfs y}W$ is injective. %Further, $f$ is {\em unramified
%over} $\bfs y\in W$ if it is unramified the number of inverse images
%of $w$ equals $\deg f$. Further, $f$ is called {\em generically
%unramified} if the subset of points $W$ over which $f$ is unramified
%is dense in $W$.
%
%----------------------------------------------------------------------------
%----------------------------------------------------------------------------
%----------------------------------------------------------------------------
%----------------------------------------------------------------------------
%
\subsection{Reduced regular sequences}
\label{subsec: reduced reg sequences} A \emph{set-theoretic complete
intersection} is a variety $V(F_1 \klk F_s) \subseteq \A^r$ defined
by $s\le r$ polynomials $F_1 \klk F_s\in K[X_1 \klk X_r]$ which is
of pure dimension $r-s$. If $s\le r+1$ homogeneous polynomials $F_1
\klk F_s$ in $K[X_0 \klk X_r]$ define a projective $K$-variety
$V(F_1 \klk F_s)\subseteq \mathbb{P}^r$ which is of pure dimension
$r-s$ (the case $r-s=-1$ meaning  that $V(F_1,\ldots,F_s)$ is the
empty set), then the variety is called a \emph{set-theoretic
complete intersection}. Elements $F_1 \klk F_s\in K[X_1 \klk X_r]$
form a \emph{regular sequence} if $(F_1 \klk F_s)$ is a proper ideal
of $K[X_1 \klk X_r]$, $F_1$ is nonzero and, for each $2\le i \le s$,
$F_i$ is neither zero nor a zero divisor in $K[X_1 \klk X_r]/(F_1
\klk F_{i-1})$. If in addition $(F_1 \klk F_i)$ is a radical ideal
of $K[X_1 \klk X_r]$ for $1\le i \le s$, then we say that $F_1 \klk
F_s$ form a \emph{reduced regular sequence}. %In the next result we
\subsection{Rational points}
We denote by $\A^r(\fq)$ the $n$--dimensional $\fq$--vector space
$\fq^r$ and by $\Pp^r(\fq)$ the set of lines of the
$(r+1)$--dimensional $\fq$--vector space $\fq^{r+1}$. For a
projective variety $V\subset\Pp^r$ or an affine variety
$V\subset\A^r$, we denote by $V(\fq)$ the set of $\fq$--rational
points of $V$, namely $V(\fq):=V\cap \Pp^r(\fq)$ in the projective
case and $V(\fq):=V\cap \A^r(\fq)$ in the affine case. For an affine
variety $V$ of dimension $n$ and degree $\delta$ we have the upper
bound (see, e.g., \cite[Lemma 2.1]{CaMa06})
\begin{equation}\label{eq: upper bound -- affine gral}
   |V(\fq)|\leq \delta q^n.
\end{equation}
%
%----------------------------------------------------------------------------
%----------------------------------------------------------------------------
%----------------------------------------------------------------------------
%----------------------------------------------------------------------------
%----------------------------------------------------------------------------
%----------------------------------------------------------------------------
%----------------------------------------------------------------------------
%----------------------------------------------------------------------------
%
\section{Geometric properties satisfied by ``most'' systems}
\label{section: probability reg complete inters}
In the sequel, we fix $1< s< r$ and $d\ge 2$. Denote by
$\mathcal{F}_d:=\mathcal{F}_{r,d}$ the set of nonzero polynomials in
$\fq[X_1, \dots, X_r]$ of degree at most $d$ and
$\mathcal{F}_d^s:=\mathcal{F}_d\times \cdots \times \mathcal{F}_d$
($s$ times). Given any $F\in\fq[X_1 \klk X_r]$ of degree $d$, we
write $\mathrm{coeffs}(F)$ for the corresponding $\binom{d +
r}{r}$-tuple of coefficients (with respect to a given monomial
order). Further, for any $s$-tuple $\bfs F_s:=(F_1 \klk F_s)\in
\mathcal{F}_d^s$ we denote $\mathrm{coeffs}(\bfs
F_s):=(\mathrm{coeffs}(F_1) \klk \mathrm{coeffs}(F_s))$.

The probability of success of our algorithm relies heavily on the
properties of a specialization $\bfs F_s(\bfs a,-)$ of a ``random''
system $\bfs F_s=\bfs 0$, with $\bfs F_s\in\mathcal{F}_d^s$ and
$\bfs a\in\fq^{r-s}$. For this reason, we devote this section to
analyze the probability that a specialization $\bfs F_s(\bfs
a,-):=(F_1(\bfs a,-), \dots, F_s(\bfs a,-))$ of a random $\bfs
F_s\in\mathcal{F}_d^s$ satisfies certain critical properties for the
analysis of our algorithm. In particular, we shall be interested in
the following property:
\begin{center}
$({\sf H})$\qquad $\bfs F_s(\bfs a,-)$ forms a reduced regular
sequence in $\cfq[X_{r-s+1} \klk X_r]$.
\end{center}
In this section we show that most specializations $\bfs F_s(\bfs
a,-)$ satisfy condition $({\sf H})$. More precisely, we obtain a
lower bound close to 1 for the probability that, for a random choice
of $\bfs F_s:=(F_1, \dots, F_s)$ in $\mathcal{F}^s_d$ and $\bfs
a\in\fq^{r-s}$, the specialization $\bfs F_s(\bfs a,-)$ satisfies
condition $({\sf H})$.
%In the sequel $\bfs F_s=\bfs 0$ stands for the system of equations
%$F_1=0, \dots, F_s=0$.

We start with the following result.
\begin{lemma}\label{lemma: Bezout implies H}
Let $K$ be any field and let $G_1, \ldots, G_s$ be  $s\le n$
polynomials in $K[X_1, \ldots, X_n]$ with $\deg G_i \le d$ for $1\le
i \le s$. If $V(G_1, \ldots, G_s)\subset\mathbb{A}^n(\overline{K})$
has degree $d^s$, then $G_1, \ldots, G_s$ form a reduced regular
sequence.
\end{lemma}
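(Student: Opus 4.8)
The plan is to build the flag $V_i := V(G_1, \ldots, G_i)$ for $1 \le i \le s$ (with $V_0 := \mathbb{A}^n$) and to prove by induction on $i$ that $V_i$ has pure dimension $n-i$, that $\deg V_i = d^i$, and that $(G_1, \ldots, G_i)$ is a radical ideal; the case $i = s$ then yields the claim. We may assume $K = \overline{K}$, since radicality and the regular-sequence property descend along the faithfully flat extension $K[X_1,\ldots,X_n] \hookrightarrow \overline{K}[X_1,\ldots,X_n]$.

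First I would extract the degree information from the B\'ezout inequality \eqref{eq: Bezout}. Writing $V_i = V_{i-1} \cap \{G_i = 0\}$ and using $\deg\{G_i=0\} \le \deg G_i \le d$, repeated application of \eqref{eq: Bezout} gives $\deg V_s \le \deg V_i \cdot d^{s-i} \le d^s$ for every $i$. Since $\deg V_s = d^s$ by hypothesis, equality holds throughout, so $\deg V_i = d^i$ and $\deg\{G_i = 0\} = \deg G_i = d$; the latter forces each $G_i$ to be squarefree of degree exactly $d$. In particular $V_1 = \{G_1 = 0\}$ is a reduced hypersurface of pure dimension $n-1$ and $(G_1)$ is radical, settling the base case.

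For the inductive step I would first show that the intersection with $\{G_i = 0\}$ is proper. Split $V_{i-1}$ (of pure dimension $n-i+1$ and degree $d^{i-1}$ by induction) as $A \cup B$, where $A$ collects the irreducible components contained in $\{G_i = 0\}$ and $B$ the remaining ones, so that $\deg A + \deg B = d^{i-1}$. Then $V_i = A \cup (B \cap \{G_i=0\})$, and subadditivity of degree together with \eqref{eq: Bezout} gives $d^i = \deg V_i \le \deg A + d\deg B$. Comparing with $d^i = d\deg A + d \deg B$ yields $(d-1)\deg A \le 0$, hence $A = \emptyset$ since $d \ge 2$. Thus no component of $V_{i-1}$ lies in $\{G_i = 0\}$: because $(G_1,\ldots,G_{i-1})$ is radical, this means $G_i$ is a nonzero non--zerodivisor in $\overline{K}[X_1,\ldots,X_n]/(G_1,\ldots,G_{i-1})$, which is exactly the regular--sequence condition, and it also shows that each component of $V_{i-1}$ meets $\{G_i=0\}$ in pure dimension $n-i$, so that $V_i$ is pure of dimension $n-i$.

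It remains to prove that $(G_1,\ldots,G_i)$ is radical, and this is the main obstacle. Since $G_1,\ldots,G_i$ is a regular sequence, the quotient $R_i := \overline{K}[X_1,\ldots,X_n]/(G_1,\ldots,G_i)$ is a complete intersection, hence Cohen--Macaulay and therefore unmixed; by Serre's criterion it is reduced if and only if it is generically reduced, i.e.\ if and only if each component $Z$ of $V_i$ occurs with multiplicity $m_Z = 1$ in $\operatorname{Spec} R_i$. To pin down the multiplicities I would cut with a generic linear space $L$ of dimension $i$: the finite scheme $V_i \cap L$ has length $\sum_Z m_Z \deg Z$, while the projective B\'ezout theorem bounds this length by $\prod_{j=1}^i \deg G_j = d^i$. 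Since $\sum_Z \deg Z = \deg V_i = d^i$ and every $m_Z \ge 1$, all multiplicities must equal $1$, so $R_i$ is reduced and $(G_1,\ldots,G_i)$ is radical. The delicate point in this last step is the passage from the reduced (``geometric'') degree supplied by the hypothesis to the scheme--theoretic intersection number, which is where the exact multiplicity bound $d^i$ --- and hence the comparison forcing $m_Z=1$ --- is used.
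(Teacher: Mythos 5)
Your proof is correct, and it splits naturally into a part that matches the paper and a part that does not. The degree bookkeeping (forcing $\deg V_i=d^i$ and $\deg V(G_i)=d$ from the B\'ezout chain) and the regular-sequence step (a degree count showing no component of $V_{i-1}$ can lie inside $\{G_i=0\}$, which, given radicality of $(G_1,\ldots,G_{i-1})$, makes $G_i$ a non-zerodivisor) are essentially the paper's argument, merely organized as a clean induction instead of a minimal counterexample. Where you genuinely diverge is the radicality step. The paper intersects $V(\bfs G_{i-1})$ with $V(G_i)$, invokes \cite[Corollary 18.4]{Harris92} to get transversality at a general point of every component from the degree equality, hence generic rank $i$ of the Jacobian, and concludes via the Jacobian criterion for radicality of codimension-$i$ ideals \cite[Theorem 18.15]{Eisenbud95}. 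You instead use that the complete intersection is Cohen--Macaulay, reduce via Serre's criterion to generic reducedness, and force every component multiplicity $m_Z$ to equal $1$ by comparing $\sum_Z m_Z\deg Z\le d^i$ (scheme-theoretic B\'ezout, valid here because the Koszul resolution kills the higher Tor corrections for a proper intersection of hypersurfaces) against the hypothesis $\sum_Z\deg Z=d^i$. Both routes rest on the same Cohen--Macaulay structure; yours avoids the Jacobian and any tacit characteristic/separability caveat lurking in the generic-transversality statement, at the price of invoking the refined B\'ezout theorem with multiplicities, which is strictly stronger than the reduced-degree inequality \eqref{eq: Bezout} that the paper actually records. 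As you acknowledge, that passage from geometric degree to intersection numbers is the one step that needs a precise citation (e.g., Fulton or Vogel) rather than a reference to \eqref{eq: Bezout} alone; with that reference supplied, the argument is complete.
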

\begin{proof}
Denote $\bfs G_i:=(G_1,\ldots,G_i)$ for $1\le i\le s$. By the
B\'ezout inequality \eqref{eq: Bezout},
$$\deg V(\bfs G_i)\le d\cdot
\deg V(\bfs G_{i-1})$$
for $1\le i\le s$. As a consequence, the hypothesis $\deg V(\bfs
G_s)=d^s$ implies that $\deg V(\bfs G_i)=d^i$ for $1\le i\le s$.
Further, by considering the sequence $G_i, G_1, \ldots, G_{i-1},
G_{i+1}, \ldots, G_s$, the previous argument shows that $\deg
V(G_i)=d$ for $1\le i\le s$. Since $\deg G_i\le d$, it follows that
$(G_i)$ is the vanishing ideal of the hypersurface $V(G_i)$ for
$1\le i\le s$.

Now suppose that $G_i$ is a zero divisor modulo $\bfs G_{i-1}$.
Assume further that $i\ge 2$ is the minimum index with this
property. Denote by $C_1,\ldots, C_t$ the irreducible components of
$V\big(\bfs G_{i-1}\big)$, which are all of pure dimension $n-i+1$.
By definition, we have $\sum_{j=1}^t\deg C_j=d^{i-1}$. As $G_i$ is a
zero divisor modulo $\bfs G_{i-1}$, it vanishes identically over an
irreducible component, say $C_1$. As a consequence,
\begin{align*}
\deg  V(\bfs G_i)&\le\sum_{j=1}^t\deg C_j\cap V(G_i)\\
&\le \deg C_1+\sum_{j=2}^t\deg C_j\cdot d< \sum_{j=1}^t\deg C_j\cdot
d=d^i,
\end{align*}
contradicting the fact that $\deg  V(\bfs G_i)=d^i$. We conclude
that $G_1,\ldots,G_s$ form a regular sequence.

We have already shown that $(G_i)$ is a radical ideal for $1\le i\le
s$. Now, let $2\le i \le s$ and assume inductively that the ideal
$(G_1, \ldots, G_{i-1})$ is radical. As $G_1, \ldots, G_{i}$ form a
regular sequence, $V(\bfs G_{i-1})$ and $V(G_i)$ are varieties of
pure dimension that intersect properly. On the other hand, we have
$$
\deg V(\bfs G_{i-1})\cdot \deg V(G_i) = \deg V(\bfs
G_i)=\sum_{Z}\deg Z,
$$
where the sum runs over all irreducible components $Z$ of $V(\bfs
G_i)$. By, e.g., \cite[Corollary 18.4]{Harris92} we deduce that
$V(\bfs G_{i-1})$ and $V(G_i)$ intersect transversely at a general
point $p$ of any component $Z$ of $V(\bfs G_i)$. Since, by the
inductive hypothesis, $G_1, \ldots, G_{i-1}$ generate the vanishing
ideal of $V(\bfs G_{i-1})$  and  $G_i$ generates the vanishing ideal
of $V(G_i)$, the fact that $V(\bfs G_{i-1})$ and $V(G_i)$ intersect
transversely at $p$ implies that the Jacobian matrix of $G_1,
\ldots, G_i$ has rank equal to $i$ at $p$. As the ideal $(G_1,
\ldots, G_i)$ has codimension $i$, by \cite[Theorem
18.15]{Eisenbud95} we conclude that $G_1, \ldots, G_{i}$ generate a
radical ideal. This concludes the proof of the lemma.
\end{proof}

Now we obtain an hypersurface containing all the $\bfs F_s$ for
which a specialization does not satisfy condition $({\sf H})$.
\begin{proposition} \label{prop: condition red reg sequence of specialization}
There exists a nonzero $\bfs P_S \in \fq[\mathrm{coeffs}(\bfs F_s)]$
with
$$\deg \bfs P_S \leq 2d^s(d+1)^s,$$
such that for any $\bfs F_s:=(F_1 \klk F_s) \in
\mathcal{F}_d^s(\cfq)$ with $\bfs P_S(\mathrm{coeffs}(\bfs F_s))
\neq 0$, the specialization $\bfs F_s(\bfs a,-)$ satisfies condition
$({\sf H})$.
\end{proposition}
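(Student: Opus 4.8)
The plan is to exploit Lemma 1 (Bezout implies H): it suffices to produce a nonzero polynomial $\bfs P_S$ in the coefficients of $\bfs F_s$ whose nonvanishing guarantees that the specialized variety $V(\bfs F_s(\bfs a,-))\subset\mathbb{A}^s(\cfq)$ has degree exactly $d^s$ (equivalently, that it is zero-dimensional of cardinality $d^s$ counted with multiplicity, since here $n=s$ and we are intersecting $s$ hypersurfaces in $s$ variables). So the real task is an elimination-theoretic one: encode the condition "$V(\bfs F_s(\bfs a,-))$ is a finite set of $d^s$ points" as the nonvanishing of a single polynomial in $\mathrm{coeffs}(\bfs F_s)$, and control its degree.

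First I would reduce to the generic setting. Treat the coefficients of $F_1,\dots,F_s$ as indeterminates and consider the universal specialized system $\bfs F_s(\bfs a, X_{r-s+1},\dots,X_r)$ over the field $\fq(\mathrm{coeffs}(\bfs F_s))$. The natural tool is the \emph{multivariate resultant} (or $u$-resultant): for $s$ generic forms in $s$ variables of degrees bounded by $d$, the homogenized system has a resultant, and the condition that the affine system has the maximal number $d^s$ of solutions — i.e.\ no solutions escape to infinity and no two coincide — is governed by the nonvanishing of a resultant-type polynomial together with a discriminant. Concretely, I would homogenize in a new variable $X_0$, introduce a generic linear form $u_0 X_0+\cdots$, and form the $u$-resultant $\mathcal{R}(u)$; its degree in $u$ equals the number of projective solutions, which is $d^s$ by Bezout precisely when the leading-form system (the system of top-degree parts) has no common projective zero. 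The vanishing of the resultant of these leading forms is one closed condition; a second condition (a discriminant, or the nonvanishing of the appropriate leading coefficient of $\mathcal{R}(u)$) ensures the $d^s$ finite solutions are distinct. I would take $\bfs P_S$ to be the product of these two factors.

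The \textbf{degree bound} is the point requiring care. By the classical Macaulay degree formula, the multivariate resultant of $s$ forms of degrees $d_1,\dots,d_s$ in $s$ variables has degree $\sum_i \prod_{j\ne i} d_j$ in the coefficients of $F_i$; with all $d_j\le d$ this gives total degree $O(d^s)$ in each group of coefficients. The discriminant-type factor that separates the $d^s$ points contributes a further factor, and tracking both through the homogenization (which replaces $d$ by something like $d+1$ because of the extra variable $X_0$ and the linear form $u$) is what produces the shape $2d^s(d+1)^s$. I expect the \textbf{main obstacle} to be exactly this bookkeeping: verifying that $\bfs P_S$ is genuinely nonzero (some admissible $\bfs F_s$ \emph{does} give a reduced regular sequence after specialization, which can be checked on an explicit Fermat-type example $X_{r-s+i}^d - c_i$), and bounding the degree of the separating/discriminant factor tightly enough to land at $2d^s(d+1)^s$ rather than a larger power. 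The rest is then immediate: if $\bfs P_S(\mathrm{coeffs}(\bfs F_s))\ne 0$ then $\deg V(\bfs F_s(\bfs a,-))=d^s$, and Lemma 1 yields condition $({\sf H})$.
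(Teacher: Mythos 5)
Your endgame is the same as the paper's: reduce, via Lemma \ref{lemma: Bezout implies H}, to certifying that $V(\bfs F_s(\bfs a,-))\subset\A^s$ consists of exactly $d^s$ distinct points, and package that certificate as the nonvanishing of one polynomial in $\mathrm{coeffs}(\bfs F_s)$. But you build the certificate by a genuinely different route. The paper introduces the incidence variety $W_{\bfs a}:=\{(\bfs F_s,\bfs x):\bfs F_s(\bfs a,\bfs x)=0\}$, shows that the projection $\pi_{\bfs a}$ onto $\mathcal{F}_d^s(\cfq)$ is dominant, generically unramified and of degree $d^s$, and then invokes an effective ``generic fiber'' result of Schost, which directly supplies a nonzero $\bfs P_S$ of degree at most $2\deg\pi_{\bfs a}\cdot\deg W_{\bfs a}\le 2d^s(d+1)^s$; the factor $(d+1)^s$ is simply the B\'ezout bound for $W_{\bfs a}$ (defined by $s$ equations of degree $d+1$ in the coefficients and the $X$'s jointly), not a homogenization artifact as you conjectured. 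Your Macaulay-resultant route is classical and viable, and the degree bookkeeping you defer is not actually an obstacle: writing $G_i:=F_i(\bfs a,-)$, the resultant of the $s$ leading forms has total degree $sd^{s-1}$ in the coefficients, and the separability factor described below has degree at most $s^2d^{s-1}(d-1)+sd^s$, both comfortably below $2d^s(d+1)^s$. Since each coefficient of $G_i$ is a linear form in $\mathrm{coeffs}(F_i)$ and the substitution maps $\mathcal{F}_d^s$ onto all $s$-tuples of polynomials of degree at most $d$ in the last $s$ variables, both the degree bound and the nonvanishing survive the substitution. What your approach buys is an explicit, purely elimination-theoretic $\bfs P_S$ with a sharper degree bound; what the paper's buys is uniformity (Schost's result handles finiteness and the exact count $d^s$ in one stroke, with no case analysis at infinity).

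Two of your specific choices need repair, though. First, ``the nonvanishing of the appropriate leading coefficient of $\mathcal{R}(u)$'' does not certify distinctness: by the Poisson factorization $\mathcal{R}(u)=C\prod_k L_u(\xi_k)^{m_k}$, the top coefficient in $u_0$ detects solutions on the hyperplane at infinity, not repeated solutions. You need a genuine separability certificate; the cleanest is $\mathrm{Res}\big(G_1^h,\ldots,G_s^h,\tilde J\big)$, where $\tilde J$ is the homogenization of the Jacobian determinant of $G_1,\ldots,G_s$: once the leading-form resultant is nonzero there are no zeros at infinity, so this second resultant is nonzero exactly when the Jacobian is nonsingular at every affine solution, forcing $d^s$ simple, hence distinct, points. (A discriminant of $\mathcal{R}(u)$ also works but requires eliminating the auxiliary variables $u$.) Second, your nonvanishing witness $X_{r-s+i}^d-c_i$ fails whenever the characteristic $p$ divides $d$, since then $X^d-c$ is a $p$th power and has repeated roots; take instead $F_i:=f(X_{r-s+i})$ for an irreducible, hence separable, $f\in\fq[T]$ of degree $d$ --- the witness the paper itself uses to establish both dominance and generic unramifiedness of $\pi_{\bfs a}$.
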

\begin{proof}
Consider the incidence variety
$$W_{\bfs a}:=\{(\bfs F_s,\bfs x)\in\mathcal{F}_d^s(\cfq)\times \A^s:
\bfs F_s(\bfs a,\bfs x)=0\}.$$
Let ${\sf F}_1, \ldots, {\sf F}_s$ be formal polynomials of degree
$d$ in the variables $X_1, \ldots, X_r$. It is easy to see that
${\sf F}_1(\bfs a,X_{r-s+1},\ldots,X_r),\ldots, {\sf F}_s(\bfs a,
X_{r-s+1},\ldots, X_r)$ form a regular sequence of
$\fq[\mathrm{coeffs}({\sf \bfs F}_s), X_{r-s+1},\ldots,X_r]$. This
implies that $W_{\bfs a}$ is of pure dimension
$\dim\mathcal{F}_d^s(\cfq)$. Further, we may express $W_{\bfs a}$ as
the set of solutions of the following system:
$$\mathrm{coeff}_{\bfs 0}({\sf F}_i)=
-({\sf F}_i-\mathrm{coeff}_{\bfs 0}({\sf F}_i))(\bfs
a,X_{r-s+1},\ldots,X_r)\quad(1\le i\le s),$$
where $\mathrm{coeff}_{\bfs 0}({\sf F}_i)$ denotes the coefficient
of the monomial of degree zero of ${\sf F}_i$. We conclude $W_{\bfs
a}$ may be seen as the image of a regular mapping defined on the
absolutely irreducible variety $\mathcal{F}_d^s(\cfq)\times \A^s$,
and therefore it is absolutely irreducible.

Denote by $\pi_{\bfs a}:W_{\bfs a}\to\mathcal{F}_d^s(\cfq)$ its
projection on the first coordinate. It is easy to see that there
exists a zero-dimensional fiber. Indeed, let $f\in\fq[T]$ be a monic
irreducible polynomial of degree $d$, and let $\mathcal{S}:=V(f)$ be
the set of zeros of $f$. If $\bfs F_s:=\big(f(X_{r-s+1}),
\ldots,f(X_r)\big)$, then $\pi_{\bfs a}^{-1}(\bfs F_s)=\{\bfs
F_s\}\times \mathcal{S}^s$. Then the theorem on the dimension of
fibers (see, e.g., \cite[Theorem 8.13]{Cutkosky18}) implies that
$\pi_{\bfs a}$ is dominant.

Let $J(\bfs F_s(\bfs a,-))$ denote the Jacobian of $F_i(\bfs
a,X_{r-s+1},\ldots,X_r)$ $(1\le i\le s)$ with respect to
$X_{r-s+1},\ldots,X_r$. A straightforward computation shows that
$\pi_{\bfs a}$ is unramified at $(\bfs F_s,\bfs x)\in W_{\bfs a}$ if
$J(\bfs F_s(\bfs a,-))(\bfs x)\not=0$. For $F_i:=f(X_{r-s+i})$
$(1\le i \le s)$ as above, $J(\bfs F_s(\bfs
a,-))=\prod_{i=1}^sf'(X_{r-s+i})$.
%For generic $F_1,\ldots,F_s\in\cfq[X_{r-s+1},\ldots,X_r]$ of degree $d$ and any
%$\bfs x\in\cfq^s$ such that $F_i(\bfs a,\bfs x)=0$ for $1\le i\le
%s$, we have $J(\bfs F_s(\bfs a,-))(\bfs x)\not=0$.
This shows that  $J(\bfs F_s(\bfs a, -))(\bfs x)$ does not vanish
identically on $W_{\bfs a}$. Taking into account that $W_{\bfs a}$
is absolutely irreducible we conclude that the set of points at
which $\pi_{\bfs a}$ is unramified contains  a nonempty Zariski open
subset of $W_{\bfs a}$. \cite[Proposition T.8]{Iversen73} shows that
the field extension
$\cfq(\mathcal{F}_d^s(\cfq))\hookrightarrow\cfq(W_{\bfs a})$ is
separable, and \cite[Proposition 1]{Heintz83} proves that
$\#\pi_{\bfs a}^{-1}(\bfs F_s)\le\deg\pi_{\bfs a}$ for any finite
fiber $\pi_{\bfs a}^{-1}(\bfs F_s)$, with equality in a nonempty
Zariski open subset of $\mathcal{F}_d^s(\cfq)$. We call any fiber
unramified if it belongs to this Zariski open set, and ramified
otherwise.

Observe that any zero-dimensional fiber of $\pi_{\bfs a}$ consists
of at most $d^s$ points, due to the fact that it is defined by $s$
polynomials of degree at most $d$ and \eqref{eq: Bezout}. In
particular, for $F_i:=f(X_{r-s+i})$ $(1\le i \le s)$ as above, the
corresponding fiber has precisely $d^s$ points. This proves that
$\deg\pi_{\bfs a}=d^s$.

Now
\cite[Proposition 3]{Schost03} and its proof allows to describe more
precisely an open subset for the condition $\#\pi_{\bfs a}^{-1}(\bfs
F_s)=\deg\pi_{\bfs a}=d^s$. Indeed, denote by $G_i:={\sf F}_i(\bfs
a, X_{r-s+1}, \ldots, X_r)$ $(1\le i \le s)$ the polynomials of
$\cfq[\mathrm{coeffs}({\sf \bfs F}_s), X_{r-s+1}, \ldots, X_r]$
which define the incidence variety $W_{\bfs a} =
V_{\mathcal{F}_d^s(\cfq)\times \A^s}(G_1, \ldots, G_s)$, and write
$J(\bfs G_s)\in \cfq[\mathrm{coeff}({\sf \bfs F}_s), X_{r-s+1},
\ldots, X_r]$ for the Jacobian of $G_1, \ldots, G_s$ with respect to
$X_{r-s+1},\ldots,X_r$. Since $J(\bfs G_s)(\bfs F_s, \bfs x)=J(\bfs
F_s(\bfs a, -))(\bfs x)$ for any $(\bfs F_s, \bfs x)\in W_{\bfs a}$,
it follows by the previous discussion that $J(\bfs G_s)$ does not
vanish identically on $W_{\bfs a}$.

Let $\mathcal{V}:=V_{\mathcal{F}_d^s(\cfq)\times \A^s}\big((G_1,
\ldots, G_s):J(\bfs G_s)^{\infty}\big)$ be the nonempty subvariety
of $W_{\bfs a}$ defined by the saturated ideal $(G_1, \ldots,
G_s):J(\bfs G_s)^{\infty}$. Denote $\pi_{\bfs a}|_{\mathcal{V}}$ the
restriction of $\pi_{\bfs a}$ to $\mathcal{V}$. By the proof of
\cite[Proposition 3]{Schost03} we deduce that exists a nonzero
polynomial $\bfs P_S \in \fq[\mathrm{coeffs}(\bfs F_s)]$ of total
degree at most $2\deg (\pi_{\bfs a}|_{\mathcal{V}})\deg \mathcal{V}$
with the following property: for any $\bfs F_s\in
\mathcal{F}_d^s(\cfq)$ with $\bfs P_S(\bfs F_s)\not=0$, we have that
$\pi_{\bfs a}^{-1}(\bfs F_s)\cap \mathcal{V}$ is a finite set
containing $\deg (\pi_{\bfs a}|_{\mathcal{V}})$ points. Now, as
$W_{\bfs a}$ is irreducible and $J(\bfs G_s)$ does not vanish
identically on $W_{\bfs a}$, $W_{\bfs a}\setminus
V_{\mathcal{F}_d^s(\cfq)\times \A^s}(J(\bfs G_s))$ is an open dense
subset of $W_{\bfs a}$. Thus
$$\mathcal{V}=V_{\mathcal{F}_d^s(\cfq)\times \A^s}\big((G_1, \ldots, G_s):J(\bfs G_s)^{\infty}\big)=
\overline{W_{\bfs a}\setminus V_{\mathcal{F}_d^s(\cfq)\times
\A^s}(J(\bfs G_s))}=W_{\bfs a}.$$
We conclude that for any $\bfs F_s\in \mathcal{F}_d^s(\cfq)$, the
condition  $\bfs P_S(\bfs F_s)\not=0$ implies that the fiber
$\pi_{\bfs a}^{-1}(\bfs F_s)$ is finite and contains $\deg
(\pi_{\bfs a}|_{\mathcal{V}})=\deg \pi_{\bfs a}=d^s$ points.
 Further, observe that
$W_{\bfs a}$ is defined by $s$ polynomials of degree $d+1$. As a
consequence, by \eqref{eq: Bezout} we conclude that $\deg W_{\bfs
a}\le (d+1)^s$. It follows that
$$\deg \bfs P_S\le 2d^s(d+1)^s.$$

Finally, for each $\bfs F_s\in \mathcal{F}_d^s(\cfq)$ with $\bfs
P_S(\bfs F_s)\not=0$, we have that $\pi_{\bfs a}^{-1}(\bfs
F_s)=\{\bfs F_s\}\times V(\bfs F_s(\bfs a,-))$ is zero-dimensional
of degree $d^s$. Therefore, Lemma \ref{lemma: Bezout implies H}
implies that $\bfs F_s(\bfs a,-)$ satisfies condition ({\sf H}).
\end{proof}

Finally, we estimate the probability that a random specialization of
a random $\bfs F_s$ satisfies condition $({\sf H})$. For this
purpose, we consider the random variable $C_{\sf H}:=\fq^{r-s}\times
\mathcal{F}^s_d\rightarrow \{1, \infty\}$ defined in the following
way:
%Let $C_1: \fq^{r-s} \times \fq[X_1 \klk X_r]_{\leq d}^s \to  \{1, \infty\}$ be random variable defined by
$$
C_{\sf H}(\bfs a,\bfs F_s):=\left\{
\begin{array}{rl}
1,&\!\!\mbox{if $\bfs F_s(\bfs a,-)$ satisfies ({\sf
H})};\\% and has a common zero in }\fq^{r-s}; \\
\infty, &\!\! \hbox{otherwise}.
\end{array}
\right.
$$
We consider the set $\fq^{r-s}\times \mathcal{F}^s_d$ endowed with
the uniform probability $P_1:=P_{1,r,s,d}$ and study the probability
of
the set $\{C_{\sf H}=1\}$. %On one hand, we have
%
%$$P_1[C_{\sf H}=1]=P_1(S_{\sf H} \cap S_1^*),$$
%
%where
%
%\begin{align*}S_1^*&:=\{(\bfs a,\bfs F_s) \in \fq^{r-s} \times \mathcal{F}^s_d:
%\bfs F_s(\bfs a,-) \mbox{ has a common zero in $\fq^{r-s}$}\},\\
%
%S_{\sf H}&:=\{(\bfs a,\bfs F_s) \in \fq^{r-s} \times\mathcal{F}^s_d:
%\bfs F_s(\bfs a,-)\ \mbox{satisfies}\ ({\sf H})\}.\end{align*}
%
%The probability $P_1(S_1^*)$ is estimated in Corollary \ref{coro:
%probability 1 specialization - asymptotic}. Therefore,
%$$ \beta_d -\frac{2}{q} \leq P(S_1^*) \leq \beta_d + \frac{2}{q},$$ where
% $\beta_d:= \mu_d+ \Big(\frac{1-\mu_d+ (-1)^d}{2(d-2)!d}\Big)\frac{1}{q^s} $ and $\mu_d:=
%\sum_{j=1}^d\frac{(-1)^{j-1}}{j!}$.
%we estimate $P_1(S_{\sf H})$.
%
\begin{corollary} \label{coro: probability 1 specializ and red reg sequence}
For $q >2d^s(d+1)^s$ and $S_{\sf H}:=\{C_{\sf H}=1\}$, we have
$$1-\frac{2d^s(d+1)^s}{q} \leq P_1(S_{\sf H}) \leq 1.$$
\end{corollary}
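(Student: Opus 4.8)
The plan is to reduce the statement to a Schwartz--Zippel-type point count, using Proposition \ref{prop: condition red reg sequence of specialization} as the single geometric input. The upper bound $P_1(S_{\sf H})\le 1$ is trivial, so it suffices to control the probability of the complementary event $\{C_{\sf H}=\infty\}$, namely the proportion of pairs $(\bfs a,\bfs F_s)\in\fq^{r-s}\times\mathcal{F}_d^s$ for which the specialization $\bfs F_s(\bfs a,-)$ fails condition $({\sf H})$. Writing $D:=\binom{d+r}{r}$ for the number of coefficients of an element of $\mathcal{F}_d$, the coefficient space of $\mathcal{F}_d^s$ is naturally embedded in $\A^{sD}$.

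First I would fix $\bfs a\in\fq^{r-s}$ and apply Proposition \ref{prop: condition red reg sequence of specialization} to obtain a nonzero polynomial $\bfs P_{S,\bfs a}\in\fq[\mathrm{coeffs}(\bfs F_s)]$ of degree at most $2d^s(d+1)^s$, such that $\bfs P_{S,\bfs a}(\mathrm{coeffs}(\bfs F_s))\neq 0$ forces $\bfs F_s(\bfs a,-)$ to satisfy $({\sf H})$. The crucial point is that this degree bound is independent of $\bfs a$. Consequently, for this fixed $\bfs a$, every $\bfs F_s\in\mathcal{F}_d^s$ failing $({\sf H})$ has $\mathrm{coeffs}(\bfs F_s)$ lying in the zero set of $\bfs P_{S,\bfs a}$, an affine variety of dimension at most $sD-1$ and degree at most $2d^s(d+1)^s$ in $\A^{sD}$. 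By the bound \eqref{eq: upper bound -- affine gral}, the number of its $\fq$-rational points, and hence the number of ``bad'' $\bfs F_s$ for this $\bfs a$, is at most $2d^s(d+1)^s\,q^{sD-1}$.

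Next I would sum over the $q^{r-s}$ choices of $\bfs a$: the number of bad pairs is at most $q^{r-s}\cdot 2d^s(d+1)^s\,q^{sD-1}$, while the total number of pairs is $q^{r-s}\cdot|\mathcal{F}_d^s|$ with $|\mathcal{F}_d^s|$ of order $q^{sD}$. Dividing, the proportion of bad pairs is at most $2d^s(d+1)^s/q$, which gives $P_1(S_{\sf H})\ge 1-2d^s(d+1)^s/q$; the hypothesis $q>2d^s(d+1)^s$ ensures this lower bound is positive, hence nonvacuous.

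Since Proposition \ref{prop: condition red reg sequence of specialization} already carries all the geometric content, I expect no genuine obstacle: this is a routine counting estimate. The only step requiring care is that $\bfs P_{S,\bfs a}$ depends on $\bfs a$, so one must verify that the \emph{degree bound} — and therefore the per-$\bfs a$ estimate on the number of bad systems — is uniform in $\bfs a$, which is exactly what Proposition \ref{prop: condition red reg sequence of specialization} delivers. A minor bookkeeping subtlety is comparing the count $2d^s(d+1)^s\,q^{sD-1}$ against the exact cardinality $|\mathcal{F}_d^s|=(q^D-1)^s$ rather than $q^{sD}$; the discrepancy is of lower order and is absorbed into the stated bound.
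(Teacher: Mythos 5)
Your proposal is correct and follows essentially the same route as the paper: fix $\bfs a$, invoke Proposition \ref{prop: condition red reg sequence of specialization} to confine the bad systems to a hypersurface of degree at most $2d^s(d+1)^s$ in coefficient space, and bound its $\fq$-rational points via \eqref{eq: upper bound -- affine gral}. The only differences are presentational: you make explicit the averaging over $\bfs a$ and the uniformity of the degree bound in $\bfs a$ (which the paper leaves implicit by working with a fixed $\bfs a$), and you flag the $|\mathcal{F}_d^s|=(q^{\binom{d+r}{r}}-1)^s$ versus $q^{s\binom{d+r}{r}}$ discrepancy, a point the paper's own proof also glosses over.
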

\begin{proof}
The upper bound being obvious, we prove the lower bound. For $\bfs
a\in\fq^{r-s}$, let $N$ be the number of $\bfs F_s \in
\mathcal{F}^s_d$ such that $\bfs F_s(\bfs a,-)$ satisfies condition
$({\sf H})$, and let $M$ be the number of $\bfs F_s\in
\mathcal{F}_d^s$ such that $\bfs P_S(\mathrm{coeffs}(\bfs F_s)) =
0$, where $\bfs P_S$ is the polynomial of Proposition \ref{prop:
condition red reg sequence of specialization}. Let $e:=2d^s(d+1)^s$
and $D:=s{d+r\choose r}$.
%Let $e:=s(d-1)d^{s-1}+d^s$ and
%$D:={d+s\choose s}$. Since $q>e$ by hypothesis, we have
%$|\mathcal{F}_d|=q^{D}> eq^{D-1}$. By Lemma \ref{lemma: number of
%zeros multihom pol} we deduce that
%$$
%M\le |\mathcal{F}^s_d|-\prod_{1\le i \le s}(|\mathcal{F}_d|-e
%q^{D-1}).
%$$
%Hence
According to \eqref{eq: upper bound -- affine gral},
$$
\frac{N}{|\mathcal{F}^s_d|} \ge\frac{|\mathcal{F}^s_d|-M
}{|\mathcal{F}^s_d|}
    \geq  \frac{q^D-e q^{D-1}}{q^D}
    =1-\frac{e}{q}.
$$
%
%$$
%\frac{N}{|\mathcal{F}^s_d|} \ge\frac{|\mathcal{F}^s_d|-M
%}{|\mathcal{F}^s_d|}
%    \geq  \prod_{1\le i \le s}\frac{|\mathcal{F}_d|-e q^{D-1}}{|\mathcal{F}_d|}
%    =\prod_{1\le i \le s}\Big(1-\frac{e}{q}\Big)\geq 1-\frac{se}{q}.
%$$
%
%The last inequality assumes $q\geq se /3$, so that in the binomial
%expansion of the $s$th power, each positive even term (after the
%first two) is at least as large as the following negative odd one.
This proves the corollary.
\end{proof}

%
%----------------------------------------------------------------------------
%----------------------------------------------------------------------------
%----------------------------------------------------------------------------
%----------------------------------------------------------------------------
%----------------------------------------------------------------------------
%----------------------------------------------------------------------------
%----------------------------------------------------------------------------
%----------------------------------------------------------------------------
%
\section{The number of specializations to have an $\fq$-rational solution}
\label{section: number of specializ for having fq-solutions}
For $1<s<r$ and $d \geq 2$, %let
%$\mathcal{F}_{d}:=\mathcal{F}_{r,d}:=\{F\in \fq[X_1, \dots,
%X_r]:\deg F \le d\}$. For $s\le r$, we write $\mathcal{F}^s_d$ for
%the set of $s$-tuples
%$\bfs F_s:=(F_1, \dots, F_s)\in\mathcal{F}_{d}$. In the sequel $\bfs
%F_s=\bfs 0$ stands for the system of equations $F_1=0, \dots,
%F_s=0$.
%
given $\bfs F_s:=(F_1, \dots, F_s)$ of $\mathcal{F}^s_{d}$, we are
interested in finding an $\fq$-rational solution of the system $\bfs
F_s=\bfs 0$. For this purpose, we consider Algorithm \ref{algo:
basic scheme}, which performs searches for $\fq$-rational solutions
on the (hopefully) zero-dimensional systems arising from a number of
specializations of $\bfs F_s$. More precisely, for each choice of
$\bfs a\in \fq^{r-s}$, we shall search for an $\fq$-rational
solution of the system $\bfs F_s=\bfs 0$ in the ``vertical strip''
$\{\bfs a\}\times \fq^{s}$ determined by $\bfs a$. Thus our
algorithm mainly consists of ``searches on vertical strips'' (SVS
for short).

In Section \ref{section: probability reg complete inters} we
estimate the probability that a specialization of the system $\bfs
F_s=\bfs 0$ has ``good'' properties from the computational point of
view. In this section we study the probability that $h$ random
specializations must be considered until a vertical strip with
$\fq$-rational solutions of the input system is attained.
%
%----------------------------------------------------------------------------
%----------------------------------------------------------------------------
%----------------------------------------------------------------------------
%----------------------------------------------------------------------------
%
\subsection{The probability of having $\fq$-rational solutions in
the first specialization}
\label{subsec: prob of zeros in the first specializ}
%
%As $r$ an $d$ are fixed, we shall drop the indices $r$ and $d$ from
%the notations.
To analyze the probability of success of our algorithm in the first
search, we introduce the random variable $C_1:=\fq^{r-s}\times
\mathcal{F}^s_d\rightarrow \{1, \infty\}$ defined in the following
way:
%Let $C_1: \fq^{r-s} \times \fq[X_1 \klk X_r]_{\leq d}^s \to  \{1, \infty\}$ be random variable defined by
$$
C_1(\bfs a,\bfs F_s):=\left\{
\begin{array}{rl}
1,&\!\!\!\!\mbox{if $\bfs F_s(\bfs a,-)$ has a zero in }\fq^s; \\
\infty,\!\!\!\! & \hbox{otherwise.}
\end{array}
\right.
$$
We consider the set $\fq^{r-s}\times \mathcal{F}^s_d$ endowed with
the uniform probability $P_1:=P_{1,s,r,d}$ and study the probability
of the set $\{C_1=1\}$. Explicitly, we aim to estimate the
probability $P_1(S_1)$, for
$$S_1:=\big\{(\bfs a,\bfs F_s) \in \fq^{r-s} \times \mathcal{F}^s_d:
\bfs F_s(\bfs a,-)\mbox{ has a zero in }\fq^s\big\}.$$

We shall need the following technical lemma.
\begin{lemma}\label{lemma: vandermonde 1 search}
For $s\le d+1$ and $\bfs\alpha_1, \dots, \bfs\alpha_s\in\A^r$
pairwise distinct, consider the $(s\times {d+r\choose r})$-matrix
${\sf A}_{s,d}:=(\bfs\alpha_h^{\bfs i})_{1\le h \le s; |\bfs i|\le
d}$, where $\bfs i$ runs over all exponents $\bfs i:=(i_1, \dots,
i_r)\in \mathbb{Z}^{r}_{\geq 0} $ in some ordering and $|\bfs
i|:=i_1+\cdots+i_s$. Then ${\sf A}_{s,d}$ has maximal rank $s$.
\end{lemma}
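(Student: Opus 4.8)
The plan is to show that ${\sf A}_{s,d}$ has full row rank, which, since it has exactly $s$ rows, is the same as showing that its $s$ rows are linearly independent over $\cfq$. Observe that the $h$-th row is precisely the list of values $(X^{\bfs i})(\bfs\alpha_h)$ as $\bfs i$ ranges over the exponents with $|\bfs i|\le d$; hence a row dependence $\sum_{h=1}^s c_h\,(\textrm{row }h)=\bfs 0$ means exactly that $\sum_{h=1}^s c_h\, P(\bfs\alpha_h)=0$ for every polynomial $P\in\mathcal{F}_d$. So the statement is equivalent to the linear independence of the evaluation functionals $\mathrm{ev}_{\bfs\alpha_1},\ldots,\mathrm{ev}_{\bfs\alpha_s}$ on the space $\mathcal{F}_d$ of polynomials of degree at most $d$.

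To establish this independence I would construct, for each $h$, a separating polynomial $P_h$ of degree at most $d$ with $P_h(\bfs\alpha_k)=0$ for $k\ne h$ and $P_h(\bfs\alpha_h)\ne 0$. For each pair $h\ne k$, since $\bfs\alpha_h\ne\bfs\alpha_k$ they differ in some coordinate $j=j(h,k)$, so the affine-linear form $L_{h,k}(X):=X_j-(\bfs\alpha_k)_j$ vanishes at $\bfs\alpha_k$ but not at $\bfs\alpha_h$. Setting $P_h:=\prod_{k\ne h}L_{h,k}$ then gives a polynomial of degree $s-1$ with $P_h(\bfs\alpha_k)=0$ for every $k\ne h$ and $P_h(\bfs\alpha_h)=\prod_{k\ne h}\big((\bfs\alpha_h)_{j(h,k)}-(\bfs\alpha_k)_{j(h,k)}\big)\ne 0$.

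This is the only place the hypothesis enters: each $P_h$ has degree $s-1$, and the assumption $s\le d+1$ guarantees $s-1\le d$, so $P_h\in\mathcal{F}_d$ and is a legitimate $\cfq$-linear combination of the monomials indexing the columns. To conclude, I would feed these test polynomials into a hypothetical dependence: if $\sum_h c_h\, P(\bfs\alpha_h)=0$ for all $P\in\mathcal{F}_d$, then taking $P=P_k$ collapses the sum to $c_k\, P_k(\bfs\alpha_k)=0$, whence $c_k=0$; running this over each $k$ forces all coefficients to vanish. Thus the rows of ${\sf A}_{s,d}$ are linearly independent and the matrix has maximal rank $s$. I do not expect any genuine obstacle here; the one point requiring care is the degree bookkeeping for the separating products, which is exactly what the hypothesis $s\le d+1$ is tailored to handle.
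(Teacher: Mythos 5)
Your proof is correct, and it takes a genuinely different route from the paper's. You argue by duality: you exhibit, for each point $\bfs\alpha_h$, a separating polynomial $P_h=\prod_{k\ne h}\big(X_{j(h,k)}-(\bfs\alpha_k)_{j(h,k)}\big)$ of degree $s-1\le d$ that kills the other points, and then test a hypothetical row dependence against these $P_h$ to force all coefficients to vanish. The paper instead reduces to a univariate Vandermonde matrix: it first produces a generic linear form $\ell_{\bfs\lambda}$ taking distinct values at the $s$ points (via the nonvanishing of $\prod_{j<k}(\mathcal{L}(\bfs\alpha_j)-\mathcal{L}(\bfs\alpha_k))$ as a polynomial in the $\Lambda$'s), changes the monomial basis so that the last coordinate separates the points, and then extracts the $s\times(d+1)$ submatrix $(\alpha_{h,r}^e)$, which has rank $s$ precisely because $s\le d+1$. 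Both arguments use the hypothesis $s\le d+1$ at the same pressure point (your products must fit inside degree $d$; their Vandermonde block must have at least as many columns as rows). Your argument is more elementary --- it needs no generic projection, no change of basis, and no appeal to Vandermonde determinants --- while the paper's version has the mild structural bonus of locating an explicit full-rank $s\times(d+1)$ submatrix after a linear change of coordinates. One cosmetic point: since the $\bfs\alpha_h$ lie in $\A^r=\cfq^r$, your separating polynomials have coefficients in $\cfq$ rather than $\fq$, so strictly speaking you are working in the $\cfq$-span of the monomials indexing the columns (as you note); this is exactly what is needed, since the rank in question is the rank over $\cfq$.
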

\begin{proof}
Let $\Lambda_1, \dots, \Lambda_r$ be new indeterminates over $\cfq$
and $\mathcal{L}:=\Lambda_1X_1 +\cdots+ \Lambda_rX_r$. For any $\bfs
c:=(c_1,\dots,c_r)\in \A^r$ we write $\ell_{\bfs c}:=c_1X_1 +\dots +
c_rX_r$. Since the points $\bfs\alpha_1, \dots, \bfs\alpha_s$ are
pairwise distinct,
$$P:=\prod_{1\le j< k\le s}\big(\mathcal{L}(\bfs\alpha_j)
-\mathcal{L}(\bfs\alpha_k)\big)$$
is a nonzero polynomial in $\cfq[\Lambda_1, \dots, \Lambda_r]$. It
follows that there exists $\bfs \lambda:=(\lambda_1, \dots,
\lambda_r)\in \A^r$ such that $P(\bfs\lambda)\neq 0$. This means
that
$\ell_{\bfs\lambda}(\bfs\alpha_j)\not=\ell_{\bfs\lambda}(\bfs\alpha_k)$
for $1\le j<k\le s$. We may also assume without loss of generality
that $X_1,\ldots,X_{r-1},\ell_{\bfs\lambda}$ are $\cfq$-linearly
independent.

Write $\bfs\alpha_h:=(\alpha_{h,1}, \dots, \alpha_{h,r})$ for $1\le
h \le s$. Observe that the $h$th row of ${\sf A}_{s,d}$ consists of
the specialization of the monomial basis $\{\bfs X^{\bfs
i}:=X_1^{i_1}\cdots X_r^{i_r}:|\bfs i|\le d\}$ of
$\mathcal{F}_{d,r}(\cfq):=\{F\in \cfq[X_1,\ldots,X_r]:\deg F\le d\}$
at $\bfs\alpha_h$. Let
$\bfs\ell:=(X_1,\ldots,X_{r-1},\ell_{\bfs\lambda})$. Since $\{\bfs
\ell^{\bfs i}:=X_1^{i_1}\cdots
X_{r-1}^{i_{r-1}}\ell_{\bfs\lambda}^{i_r}:|\bfs i|\le d\}$ is
another basis of $\mathcal{F}_{d,r}(\cfq)$ as $\cfq$-vector space,
it turns out that $$\rank {\sf A}_{s,d}=\rank \big((\bfs\ell(\bfs
\alpha_h))^{\bfs i}\big)_{1\le h \le s; |\bfs i|\le d}.$$
We may therefore assume without loss of generality that
$\alpha_{h,r}\neq \alpha_{k,r}$ for $1\le h<k\le s$.

Consider the submatrix ${\sf V}_{s,d}:=(\alpha_{h,r}^{e})_{1\le h
\le s; 0\le e \le d}$ of ${\sf A}_{s,d}$. Observe that ${\sf
V}_{s,d}$ is a Vandermonde-like matrix of size $s\times (d+1)$.
Since $s\le d+1$, it is well known that our assumption implies that
${\sf V}_{s,d}$ has rank $s$, and so does ${\sf A}_{s,d}$.
\end{proof}

Now we are able to estimate the probability $P_1(S_1)$. From now on,
for any $m\in\N$ we shall use the notations
%
%\begin{equation}\label{eq: definition s_d and t_(d+1)}
$$s_m:=\sum_{j=1}^m(-1)^{j-1}\binom{q^s}{j}q^{-sj},\quad
t_m:=\binom{q^s}{m}q^{-sm}.$$
%\end{equation}
%
\begin{theorem} \label{th: probability 1 specialization}
For $s\le d+1$ and $q^s>d$, we have
\begin{align*}
s_d-t_{d+1} \le P_1(S_1)\le  s_d\quad \textrm{for }d \textrm{ odd},
\qquad s_d   \le P_1(S_1)\le s_d+t_{d+1}\quad \textrm{for }d
\textrm{ even}.
\end{align*}
\end{theorem}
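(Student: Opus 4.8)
The plan is to compute $P_1(S_1)$ by inclusion--exclusion over the $q^s$ events
$$A_{\bfs x}:=\{(\bfs a,\bfs F_s):\bfs F_s(\bfs a,\bfs x)=\bfs 0\}\qquad(\bfs x\in\fq^s),$$
so that $S_1=\bigcup_{\bfs x\in\fq^s}A_{\bfs x}$, and then to control the resulting alternating sum by the Bonferroni inequalities. Writing $\Sigma_j:=\sum_{|T|=j}P_1\big(\bigcap_{\bfs x\in T}A_{\bfs x}\big)$ for the $j$th symmetric sum (the outer sum running over the $j$-subsets $T\subseteq\fq^s$), the Bonferroni inequalities assert that truncating $\sum_j(-1)^{j-1}\Sigma_j$ at an odd (resp. even) number of terms overestimates (resp. underestimates) $P_1(S_1)$. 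The whole argument then reduces to identifying $\Sigma_j$ for $j\le d+1$ and matching parities at the truncation levels $d$ and $d+1$.

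The key step is the exact evaluation $\Sigma_j=\binom{q^s}{j}\,q^{-js}$ for every $1\le j\le d+1$. Fix $\bfs a\in\fq^{r-s}$ and a set $T=\{\bfs x_1,\dots,\bfs x_j\}$ of $j$ pairwise distinct points of $\fq^s$, and put $\bfs\alpha_k:=(\bfs a,\bfs x_k)\in\A^r$; these are pairwise distinct since the $\bfs x_k$ are. Applying Lemma \ref{lemma: vandermonde 1 search} with $s$ replaced by $j\le d+1$, the $j\times\binom{d+r}{r}$ evaluation matrix $(\bfs\alpha_k^{\bfs i})_{1\le k\le j;\,|\bfs i|\le d}$ has maximal rank $j$. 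Hence the $\fq$-linear map sending the coefficient vector of a single $F$ to $(F(\bfs\alpha_1),\dots,F(\bfs\alpha_j))$ is surjective, so exactly a $q^{-j}$ fraction of polynomials vanish at all of $\bfs\alpha_1,\dots,\bfs\alpha_j$. As the $s$ components of $\bfs F_s$ are chosen independently, the conditional probability given $\bfs a$ that $\bfs F_s(\bfs a,-)$ vanishes on all of $T$ is $(q^{-j})^s=q^{-js}$; since this is independent of $\bfs a$, averaging over $\bfs a\in\fq^{r-s}$ leaves it unchanged. Summing over the $\binom{q^s}{j}$ sets $T$ gives $\Sigma_j=\binom{q^s}{j}q^{-js}=t_j$.

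It remains to feed this into Bonferroni with the correct parities. For $d$ odd, truncating at the odd level $d$ gives the upper bound $P_1(S_1)\le\sum_{j=1}^d(-1)^{j-1}\Sigma_j=s_d$, while truncating at the even level $d+1$ gives $P_1(S_1)\ge s_d+(-1)^{d}t_{d+1}=s_d-t_{d+1}$. For $d$ even the two parities are exchanged, so the even truncation at $d$ yields $P_1(S_1)\ge s_d$ and the odd truncation at $d+1$ yields $P_1(S_1)\le s_d+t_{d+1}$. These are exactly the four claimed inequalities; the hypothesis $q^s>d$ guarantees $d+1\le q^s$, so the binomials $\binom{q^s}{j}$ for $j\le d+1$ are genuine subset counts and truncation at level $d+1$ is legitimate.

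The main obstacle is the exactness of $\Sigma_j=\binom{q^s}{j}q^{-js}$ for all $j\le d+1$: it hinges entirely on the maximal-rank statement of Lemma \ref{lemma: vandermonde 1 search} persisting for up to $d+1$ evaluation points (it fails for $j>d+1$, which is precisely why the Bonferroni truncation must stop at level $d+1$) and on the rank being independent of $\bfs a$. A secondary point to verify is the treatment of the zero polynomial in $\mathcal{F}_d$; modeling the coefficient vector of each $F_i$ as uniform on $\fq^{\binom{d+r}{r}}$ --- consistent with the count $|\mathcal{F}_d^s|=q^{s\binom{d+r}{r}}$ used elsewhere --- reproduces the stated identities exactly.
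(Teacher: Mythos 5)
Your proof is correct and follows essentially the same route as the paper's: both arguments reduce $P_1(S_1)$ to a Bonferroni truncation of the inclusion--exclusion over the $q^s$ vanishing events, and both evaluate the $j$th symmetric sum exactly as $\binom{q^s}{j}q^{-sj}$ for $j\le d+1$ via the maximal-rank statement of Lemma \ref{lemma: vandermonde 1 search} applied to the $j$ distinct points $(\bfs a,\bfs x)$, combined with the independence of the $s$ components of $\bfs F_s$. The only (immaterial) difference is that the paper applies Bonferroni to the count $NS(\bfs F_s)$ of good vertical strips for each fixed $\bfs F_s$ and then averages, whereas you apply it directly to the union of events in the product space; the resulting symmetric sums coincide with the paper's quantities $\mathcal{N}_j$.
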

\begin{proof}
For any $\bfs F_s\in \mathcal{F}^s_d$, we denote by $VS(\bfs F_s)$
the set of vertical strips where the system $\bfs F_s=\bfs 0$ has an
$\fq$-rational solution and by $NS(\bfs F_s)$ its cardinality, that
is,
$$
VS(\bfs F_s):=\{\bfs a \in \fq^{r-s}: (\exists \, \bfs x\in
\fq^{s})\,  \bfs F_s(\bfs a, \bfs x)=\bfs 0\}, \quad NS(\bfs
F_s):=|VS(\bfs F_s)|.
$$
It is easy to see that  $S_1=\bigcup_{\bfs F_s\in
\mathcal{F}^s_d}VS(\bfs F_s)\times \{\bfs F_s\}$. Since this is a
disjoint union of $\fq^{r-s}\times \mathcal{F}^s_d$, it follows that
$$
P_1(S_1)=\frac{1}{q^{r-s}|\mathcal{F}^s_d|}\sum_{\bfs F_s\in
\mathcal{F}^s_d}NS(\bfs F_s).
$$
Fix $\bfs F_s\in \mathcal{F}^s_d$. Observe that
$$
VS(\bfs F_s)=\bigcup_{\bfs x\in \fq^s}\{\bfs a\in \fq^{r-s}: \bfs
F_s(\bfs a, \bfs x)=\bfs 0\}.
$$
%As a consequence, by the inclusion-exclusion principle we obtain
%    \begin{align*}
%        % \nonumber to remove numbering (before each equation)
%        NS(\bfs F_s) &= \Bigg|\bigcup_{\bfs x\in \fq^s}\{\bfs a\in \fq^{r-s}: \bfs F_s(\bfs a, \bfs x)=\bfs 0\}\Bigg| \\
%        &= \sum_{j=1}^{q^s}(-1)^{j-1}\sum_{\mathcal{X}_j\subset \fq^s}|\{\bfs a\in \fq^{r-s}:
%        (\forall \bfs x\in \mathcal{X}_j)\, \bfs F_s(\bfs a, \bfs x)=\bfs 0\}|,
%    \end{align*}
%    where $\mathcal{X}_j$ runs through all the subsets of $\fq^s$ of cardinality $j$.
%
Assume that $d$ is odd. By the Bonferroni inequalities we deduce
that
    \begin{align*}
    % \nonumber to remove numbering (before each equation)
      NS(\bfs F_s) &\geq  \sum_{j=1}^{d+1}(-1)^{j-1}\sum_{\mathcal{X}_j\subset \fq^s}|\{\bfs a\in \fq^{r-s}:
        (\forall \bfs x\in \mathcal{X}_j)\, \bfs F_s(\bfs a, \bfs x)=\bfs 0\}|, \\
      NS(\bfs F_s) &\leq \sum_{j=1}^{d}(-1)^{j-1}\sum_{\mathcal{X}_j\subset \fq^s}|\{\bfs a\in \fq^{r-s}:
        (\forall \bfs x\in \mathcal{X}_j)\, \bfs F_s(\bfs a, \bfs x)=\bfs 0\}| .
    \end{align*}
For $1\le j \le d+1$, denote
\begin{align*}
\mathcal{N}_j&=\frac{1}{q^{r-s}|\mathcal{F}^s_d|}\sum_{\bfs F_s\in
\mathcal{F}^s_d}\sum_{\mathcal{X}_j \subset \fq^s}\big|\{\bfs a\in
\fq^{r-s}: (\forall \bfs x\in
\mathcal{X}_j)\, \bfs F_s(\bfs a, \bfs x)=\bfs 0\}\big|\\
&=\frac{1}{q^{r-s}|\mathcal{F}_{d}|^s}\sum_{\mathcal{X}_j \subset
\fq^s}\sum_{\bfs a\in \fq^{r-s}} |\{\bfs F_s\in \mathcal{F}_{d}^s:
(\forall \bfs x\in \mathcal{X}_j)\, \bfs F_s(\bfs a, \bfs x)=\bfs
0\}|.\end{align*}
We conclude that
%
%\begin{align*}
%\sum_{\bfs F_s\in \mathcal{F}^s_d}NS(\bfs F_s) & \geq \sum_{\bfs
%F_s\in \mathcal{F}^s_d}
%\sum_{j=1}^{d+1}(-1)^{j-1}\sum_{\mathcal{X}_j\subset
%\fq^s}\big|\{\bfs a\in \fq^{r-s}: (\forall \bfs x\in
%\mathcal{X}_j)\, \bfs F_s(\bfs a, \bfs x)=\bfs 0\}\big|,\\
%\sum_{\bfs F_s\in \mathcal{F}^s_d}NS(\bfs F_s) & \le \sum_{\bfs
%F_s\in \mathcal{F}^s_d}
%\sum_{j=1}^{d}(-1)^{j-1}\sum_{\mathcal{X}_j\subset \fq^s}\big|\{\bfs
%a\in \fq^{r-s}: (\forall \bfs x\in \mathcal{X}_j)\, \bfs F_s(\bfs a,
%\bfs x)=\bfs 0\}\big|.
%\end{align*}
%
%%
%From the previous inequalities we conclude that
\begin{equation}\label{eq: bonferroni first specialization}
\sum_{j=1}^{d+1}(-1)^{j-1}\mathcal{N}_j \le P(S_1)\le
\sum_{j=1}^{d}(-1)^{j-1}\mathcal{N}_j.
\end{equation}

Next we obtain an explicit expression for each $\mathcal{N}_j$. We
have a direct-product decomposition of vector spaces
    $$
\{\bfs F_s\in \mathcal{F}_{d}^s: (\forall \bfs x\in \mathcal{X}_j)\,
\bfs F_s(\bfs a, \bfs x)=\bfs 0\}=\prod_{i=1}^s\{F_i\in
\mathcal{F}_{d}: (\forall \bfs x\in \mathcal{X}_j)\, F_i(\bfs a,
\bfs x)=0\}.
    $$
Further, all the factors in the product in the right-hand side are
isomorphic $\fq$-vector spaces. As a consequence, we may write
\begin{align*}
\mathcal{N}_j&=\frac{1}{q^{r-s}}\sum_{\mathcal{X}_j \subset
\fq^s}\sum_{\bfs a\in
\fq^{r-s}}\Bigg(\frac{1}{|\mathcal{F}_{d}|}\big|\{F\in
\mathcal{F}_{d}: (\forall \bfs x\in \mathcal{X}_j)\, F(\bfs a, \bfs
x)=\bfs 0\}\big|\Bigg)^s.\end{align*}
Let $j\le d+1$ and $\bfs a\in \fq^s$ be fixed. The set of equalities
$F(\bfs a, \bfs x)=0$ $(\bfs x\in \mathcal{X}_j)$ constitute $j$
linear conditions on the coefficients of $F$, which can be expressed
by a $j\times{d+r\choose d}$-matrix of rank $j$ by Lemma \ref{lemma:
vandermonde 1 search}. We conclude that
\begin{align*}
\mathcal{N}_j&=\sum_{\mathcal{X}_j \subset
\fq^s}\Bigg(\frac{1}{|\mathcal{F}_{d}|}\big|\{F\in \mathcal{F}_{d}:
(\forall \bfs x\in \mathcal{X}_j)\, F(\bfs a,
\bfs x)=\bfs 0\}\big|\Bigg)^s\\
&=\sum_{\mathcal{X}_j \subset \fq^s}\Bigg(\frac{q^{\dim\mathcal
F_{r,d}-j}}{|\mathcal{F}_{d}|}\Bigg)^s
=\binom{q^s}{j}q^{-sj}.\end{align*}
By \eqref{eq: bonferroni first specialization} it follows that
$$\sum_{j=1}^{d+1}(-1)^{j-1}\binom{q^s}{j}q^{-sj} \le P(S_1)
\le \sum_{j=1}^{d}(-1)^{j-1}\binom{q^s}{j}q^{-sj},$$
which proves the statement for $d$ odd.

The statement for $d$ even follows by a similar argument {\em
mutatis mutandis}.
\end{proof}

We now discuss the asymptotic behavior of the probability
$P_1(S_1)=P_1[C_1=1]$.
\begin{corollary} \label{coro: probability 1 specialization - asymptotic}
    For $s\le d+1$ and $q^s>d$, we have
    \begin{align*}
      \mu_{d+1} - \frac{2}{q^s}   &
      \le P_1(S_1) \le \mu_d + \frac{2}{q^s}\quad \textrm{for }d \textrm{ odd},\\
       \mu_{d} - \frac{2}{q^s}   &
\le P_1[C_1=1] \le \mu_{d+1} + \frac{2}{q^s} \quad \textrm{for }d
\textrm{ even}.
    \end{align*}
\end{corollary}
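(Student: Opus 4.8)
The plan is to compare the exact quantity $s_m$ appearing in Theorem~\ref{th: probability 1 specialization} with the truncated series $\mu_m$, and to show that the two differ by at most $2/q^s$; the corollary then follows by substituting this estimate into the bounds of that theorem. First I would rewrite those bounds in a uniform way. Since $s_{d+1}=s_d+(-1)^{d}t_{d+1}$, the inequalities of Theorem~\ref{th: probability 1 specialization} read $s_{d+1}\le P_1(S_1)\le s_d$ for $d$ odd and $s_d\le P_1(S_1)\le s_{d+1}$ for $d$ even. Hence it suffices to establish
\[
|s_m-\mu_m|\le \frac{2}{q^s}\qquad(m\in\{d,d+1\}),
\]
and then read off each of the four inequalities: e.g.\ for $d$ odd the upper bound $P_1(S_1)\le s_d\le\mu_d+2/q^s$ and the lower bound $P_1(S_1)\ge s_{d+1}\ge\mu_{d+1}-2/q^s$, and symmetrically for $d$ even.

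The key elementary computation is the factorization
\[
\binom{q^s}{j}q^{-sj}=\frac{1}{j!}\prod_{i=1}^{j-1}\Big(1-\frac{i}{q^s}\Big),
\]
whose factors are all positive because $j-1\le d<q^s$ (here the hypothesis $q^s>d$ is used, and since $q^s$ is an integer this even gives $q^s\ge d+1$). Writing $\delta_j:=\frac{1}{j!}-\binom{q^s}{j}q^{-sj}\ge 0$, we have $\delta_1=0$ and, using the standard inequality $1-\prod_{i}(1-x_i)\le\sum_i x_i$ for $x_i\in[0,1]$,
\[
\delta_j=\frac{1}{j!}\Big(1-\prod_{i=1}^{j-1}\big(1-\tfrac{i}{q^s}\big)\Big)\le\frac{1}{j!}\cdot\frac{j(j-1)}{2q^s}=\frac{1}{2q^s\,(j-2)!}\qquad(j\ge 2).
\]
Summing the alternating differences and applying the triangle inequality gives, for any $m\le d+1$,
\[
|s_m-\mu_m|=\Big|\sum_{j=1}^{m}(-1)^{j}\delta_j\Big|\le\sum_{j=2}^{m}\delta_j\le\frac{1}{2q^s}\sum_{k\ge 0}\frac{1}{k!}=\frac{e}{2q^s}<\frac{2}{q^s},
\]
which is the desired estimate.

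I expect no genuine obstacle here: the whole argument is a term-by-term comparison of $\binom{q^s}{j}q^{-sj}$ with $1/j!$, and the only points requiring care are purely organizational—matching the odd/even Bonferroni truncations of Theorem~\ref{th: probability 1 specialization} to the correct index $d$ or $d+1$ of $\mu$, and checking that the positivity of the factors $1-i/q^s$ (hence $\delta_j\ge 0$ and the validity of the product inequality) is guaranteed by $q^s>d$. The replacement of $e/2\approx 1.359$ by the stated constant $2$ leaves ample slack, so one could even record the sharper bound $e/(2q^s)$ if desired.
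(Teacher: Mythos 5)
Your proof is correct, and it takes a genuinely different route from the paper's. The paper proves the key estimate $|s_m-\mu_m|\le \frac{1}{4q^s}+\frac{m}{q^{2s}}$ by expanding $\binom{q^s}{j}$ via the Stirling-number identity $\binom{q^s}{j}=\sum_{k=0}^j\frac{(-1)^{j-k}}{j!}\stirling{j}{k}q^{sk}$, isolating the contributions of $\stirling{j}{j}=1$ and $\stirling{j}{j-1}=\binom{j}{2}$, and bounding the tail with $\sum_k\stirling{j}{k}=j!$; this requires $m>2$ and a separate direct check at $m=2$. You instead compare term by term, writing $\binom{q^s}{j}q^{-sj}=\frac{1}{j!}\prod_{i=1}^{j-1}(1-i/q^s)$ and invoking the Weierstrass product inequality to get $0\le \delta_j\le \frac{1}{2q^s(j-2)!}$, hence $|s_m-\mu_m|\le \frac{e}{2q^s}<\frac{2}{q^s}$ uniformly in $m$. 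Your hypotheses are correctly tracked ($q^s>d$, hence $q^s\ge d+1\ge j$, keeps every factor positive), and the final assembly via $s_{d+1}=s_d+(-1)^dt_{d+1}$ matches the odd/even Bonferroni truncations exactly as the paper does. What each approach buys: the paper's Stirling expansion yields the sharper leading constant $\frac14$ in place of $\frac e2$ (the same estimate \eqref{eq: s_m asymptotic} is reused later, e.g.\ in Corollary \ref{coro: probability of h specializations - asymptotic}, though there too only the crude bound $2/q^s$ is needed, so your version would serve equally well); your argument is more elementary, avoids the $m=2$ case split, and makes the sign and monotonicity structure of the error terms transparent.
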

\begin{proof}
For positive integers $k,j$ with $k\le j$, we denote by
$\stirling{j}{k}$ the unsigned Stirling number of the first kind,
namely the number of permutations of $j$ elements with  $k$ disjoint
cycles. The following properties of Stirling numbers are well-known
(see, e.g., \cite[Section A.8]{FlSe09})
    $$\stirling{j}{j}=1,\ \ \stirling{j}{{j-1}}=\binom{j}{2},\ \ \sum_{k=0}^j\stirling{j}{k}=j!.$$
     We shall also use the following well-known identity (see, e.g., \cite[(6.13)]{GrKnPa94}):
    \begin{equation}\label{eq: binomial in terms of stirling numbers}
    \binom {q^s}{j}=\sum_{k=0}^j\frac{(-1)^{j-k}}{j!}\stirling{j}{k}q^{sk}.
    \end{equation}
According to \eqref{eq: binomial in terms of stirling numbers}, for
any positive integer $m$ we have that
    \begin{align*}
    s_m=\sum_{j=1}^{m}(-1)^{j-1}\binom{q^s}{j}q^{-sj}&= \sum_{j=1}^{m}(-1)^{j-1}\sum_{k=0}^j
    \frac{(-1)^{j-k}}{j!}\stirling{j}{k}q^{s(k-j)}
    \\
    &= \sum_{j=1}^m \frac{(-1)^{j-1}}{j!}\stirling{j}{j}+ \sum_{j=1}^m
    \frac{(-1)^j}{j!}\stirling{j}{j-1}q^{-s}\\&\quad+
    \sum_{j=1}^m\sum_{k=0}^{j-2}
    \frac{(-1)^{k-1}}{j!}\stirling{j}{k}q^{s(k-j)}.
    \end{align*}
    It follows that
    $$
    s_m=\mu_m+\frac{1}{q^s}\sum_{j=1}^m
    \frac{(-1)^j}{j!}\binom{j}{2}- \sum_{j=1}^m\sum_{k=0}^{j-2}
    \frac{(-1)^k}{j!}\stirling{j}{k}q^{s(k-j)}.
    $$
    As a consequence, for $m > 2$ we obtain
\begin{equation}\label{eq: s_m asymptotic}
|s_m-\mu_m|\le \frac{1}{q^s}\bigg|\sum_{j=1}^m
    \frac{(-1)^j}{j!}\binom{j}{2}\bigg|+ \sum_{j=1}^m\sum_{k=0}^{j-2}
    \frac{1}{j!}\stirling{j}{k}\frac{1}{q^{2s}}
\le \frac{1}{4q^s} +\frac{m}{q^{2s}}.\end{equation}
For $m=2$, this inequality is obtained by a direct calculation.
Combining this for $m=d$ and $m=d+1$ with Theorem \ref{th:
probability 1 specialization} we readily deduce the statement of the
corollary.
\end{proof}

%
%----------------------------------------------------------------------------
%----------------------------------------------------------------------------
%----------------------------------------------------------------------------
%----------------------------------------------------------------------------
%
\subsection{The probability that $h$ specializations are required}
Now we study the probability that multiple specializations are
required to attain a vertical strip with $\fq$-rational solutions of
the input system $\bfs F_s=\bfs 0$. More precisely, for $2\le h\le
r-s+1$, we analyze the probability that a random choice $\bfs
a_1,\ldots,\bfs a_h\in\fq^{r-s}$ yields systems $\bfs F_s(\bfs
a_i,-)=\bfs 0$ for $1\le i\le h-1$ with no $\fq$-rational solutions,
such that $\bfs F_s(\bfs a_h,-)=\bfs 0$ has $\fq$-rational
solutions.

Let $\bfs{\underline{a}}:=(\bfs a_1,\ldots,\bfs a_h)$ be such that
$\bfs a_i\not=\bfs a_j$ for $i\not=j$ and
$$S_{h,\bfs{\underline{a}}}^*:=\{\bfs F_s\in\mathcal{F}_{d}^s:
Z(\bfs F_s(\bfs a_i,-))=0\ (1\le i\le h-1),\ Z(\bfs F_s(\bfs
a_h,-))>0\},$$
where $Z(\bfs G_s)$ denotes the number of zeros in $\fq^s$ of $\bfs
G_s\in \fq[X_{r-s+1},\ldots,X_r]^s$. We may express
$S_{h,\bfs{\underline{a}}}^*$ in the following way:
$$S_{h,\bfs{\underline{a}}}^*=S_{1,\bfs a_h}^*\setminus
\bigcup_{j=1}^{h-1}S_{2,(\bfs a_j,\bfs a_h)},$$
where $S_{2,(\bfs a_j,\bfs a_h)}:=\{\bfs F_s\in\mathcal{F}_{d}^s:
Z(\bfs F_s(\bfs a_j,-))>0,\ Z(\bfs F_s(\bfs a_h,-))>0\}$. Therefore,
by the inclusion-exclusion principle, we obtain
\begin{align}
|S_{h,\bfs{\underline{a}}}^*|&=|S_{1,\bfs a_h}^*|-\Bigg|
\bigcup_{j=1}^{h-1}S_{2,(\bfs a_j,\bfs a_h)}\Bigg|\notag\\
&=|S_{1,\bfs a_h}^*|+\sum_{k=1}^{h-1}(-1)^k\sum_{1\le
i_1<\cdots<i_k< h}\big|S_{2,(\bfs a_{i_1},\bfs a_h)}\cap\cdots\cap
S_{2,(\bfs
a_{i_k},\bfs a_h)}\big|\notag\\
&=|S_{1,\bfs a_h}^*|+\sum_{k=1}^{h-1}(-1)^k\sum_{1\le
i_1<\cdots<i_k< h}\big|S_{k+1,(\bfs a_{i_1},\ldots,\bfs a_{i_k},\bfs
a_h)}\big|, \label{eq: S_h in terms of S_k}
\end{align}
where $S_{k,(\bfs b_1,\ldots,\bfs b_k)}:=\{\bfs
F_s\in\mathcal{F}_{d}^s: Z(\bfs F_s(\bfs b_j,-))>0\ for\ 1\le j\le
k\}$.

To estimate $|S_{h,\bfs{\underline{a}}}^*|$, we establish a
condition on $\bfs{\underline{a}}$ which implies that
$\big|S_{k+1,(\bfs a_{i_1},\ldots,\bfs a_{i_k},\bfs a_h)}\big|$ has
a predictable behavior. Then we estimate $\big|S_{k+1,(\bfs
a_{i_1},\ldots,\bfs a_{i_k},\bfs a_h)}\big|$ for $1\le k\le h-1$,
which will lead us to an estimate for
$|S_{h,\bfs{\underline{a}}}^*|$. Finally, we shall be able to
estimate the probability that $h$ random choices $\bfs
a_1,\ldots,\bfs a_h$ are required to have an $\fq$-rational solution
of the system under consideration.
%
%----------------------------------------------------------------------------
%----------------------------------------------------------------------------
%
\subsubsection{A condition on $\bfs{\underline{a}}$}
We start with the condition on $\bfs{\underline{a}}$ mentioned
above. Given positive integers $d\ge j_1\ge j_2\ge\cdots\ge j_h\ge
1$ and sets $\mathcal{X}_{j_i}\subset \fq^s$ of cardinality $j_i$
for $1\le i\le h$, we shall be interested in the number of $\bfs
F_s:=(F_1,\ldots,F_s)\in\mathcal{F}_{d}^s$ satisfying
$$\bfs F_s(\bfs a_i,\bfs x)=0 \textrm{ for all}\ \bfs
x\in\mathcal{X}_{j_i}\textrm{ and }1\le i\le h.
$$
We have the following result.
\begin{proposition}\label{prop: ranks generalized Vandermonde}
Denote $\bfs a_i:=(a_{i,1},\ldots,a_{i,r-s})$ for $1\le i\le h$ and
let
$${\sf M}:=\left(
  \begin{array}{cccc}
    1 & a_{1,1} & \ldots & a_{1,h-1} \\
    1 & a_{2,1} & \ldots & a_{2,h-1} \\
    \vdots & \vdots &  & \vdots \\
    1 & a_{h,1} & \ldots & a_{h,h-1} \\
  \end{array}
\right).$$
If ${\sf M}$ is invertible, then
$$\left|\{\bfs F_s\in\mathcal{F}_{d}^s:\bfs F_s(\bfs a_i,\bfs x)=0\
\forall\bfs x\in\mathcal{X}_{j_i}\textrm{ and }1\le i\le
h\}\right|=q^{s(\dim\mathcal{F}_{d}-(j_1+\cdots+j_h))}.$$
\end{proposition}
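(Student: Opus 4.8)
The plan is to count the number of polynomial tuples by reducing to a linear algebra statement about the rank of a large coefficient matrix. Since the constraints $\bfs F_s(\bfs a_i,\bfs x)=0$ decompose coordinate-wise — the conditions on $F_1$ are independent of those on $F_2,\ldots,F_s$ — the set in question is a direct product of $s$ isomorphic $\fq$-vector spaces, exactly as in the proof of Theorem \ref{th: probability 1 specialization}. So it suffices to show that the conditions imposed on a single $F\in\mathcal{F}_d$, namely $F(\bfs a_i,\bfs x)=0$ for all $\bfs x\in\mathcal{X}_{j_i}$ and $1\le i\le h$, form $J:=j_1+\cdots+j_h$ linearly independent linear conditions on $\mathrm{coeffs}(F)$. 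Granting this, the solution space has dimension $\dim\mathcal{F}_d-J$, each of the $s$ factors contributes $q^{\dim\mathcal{F}_d-J}$ points, and the total is $q^{s(\dim\mathcal{F}_d-J)}$, as claimed.

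First I would assemble the coefficient matrix ${\sf A}$ whose rows are indexed by the pairs $(i,\bfs x)$ with $\bfs x\in\mathcal{X}_{j_i}$, and whose entry in row $(i,\bfs x)$ and column $\bfs k$ is $(\bfs a_i,\bfs x)^{\bfs k}$, where $\bfs k$ runs over exponents with $|\bfs k|\le d$. The goal is $\rank{\sf A}=J$. The natural strategy is to exploit the hypothesis that ${\sf M}$ is invertible together with the monomial structure. Separating each multi-index into its first $r-s$ entries (the ``$\bfs a$-part'') and its last $s$ entries (the ``$\bfs x$-part''), a row is a product of a value depending only on $\bfs a_i$ and a value depending only on $\bfs x$. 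The invertibility of ${\sf M}$ is what distinguishes the $h$ different points $\bfs a_1,\ldots,\bfs a_h$: it guarantees that the vectors $(1,a_{i,1},\ldots,a_{i,h-1})$ are linearly independent, so one can find, as in Lemma \ref{lemma: vandermonde 1 search}, a suitable linear form in the $a$-variables separating the $\bfs a_i$, and then substitute it for one of the coordinates to effect a change of basis of $\mathcal{F}_d$.

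The cleanest route I foresee is to argue by induction on $h$. The conditions split into a block at level $h$, contributing $j_h$ rows involving $\bfs a_h$, and the remaining blocks for $\bfs a_1,\ldots,\bfs a_{h-1}$. Using that ${\sf M}$ invertible forces its top-left $(h-1)\times(h-1)$ principal data to behave well, I would produce a linear change of the first $r-s$ variables sending $\bfs a_h$ to a point whose first coordinate differs from all the others, then peel off the $j_h$ conditions attached to $\bfs a_h$ using a Vandermonde argument in that coordinate (mimicking the reduction to ${\sf V}_{s,d}$ in Lemma \ref{lemma: vandermonde 1 search}), and invoke the inductive hypothesis on the first $h-1$ points to handle the remaining $j_1+\cdots+j_{h-1}$ conditions independently. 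The requirement $d\ge j_1\ge\cdots\ge j_h$ is exactly what lets the degree budget $d$ accommodate the Vandermonde blocks at every level.

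The main obstacle I anticipate is the interaction between the two independence mechanisms: the separation of the distinct points $\bfs a_i$ (governed by ${\sf M}$, i.e.\ involving only the first $r-s$ variables) and the separation of the distinct $\bfs x\in\mathcal{X}_{j_i}$ within each fiber (governed by the Vandermonde structure of Lemma \ref{lemma: vandermonde 1 search}, involving the last $s$ variables). One must verify that after the change of variables used to separate the $\bfs a_i$, the conditions coming from the $\mathcal{X}_{j_i}$ remain independent and that no unexpected degeneration lowers the rank below $J$. Making precise why exactly $h-1$ columns of ${\sf M}$ (and hence a linear form built from $1,a_{\bullet,1},\ldots,a_{\bullet,h-1}$) suffice to separate $h$ points, and threading this through the induction without the degree bound $d$ being violated, is the delicate bookkeeping at the heart of the argument; everything else is routine dimension counting once the rank equals $J$ is established.
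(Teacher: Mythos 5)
Your setup is the same as the paper's: the coordinatewise product decomposition, the reduction to a single $F\in\mathcal{F}_d$, and the identification of the goal as showing that the $J:=j_1+\cdots+j_h$ evaluation conditions have rank $J$. But that rank claim is the entire content of the proposition, and your proposal does not prove it --- you explicitly defer ``the delicate bookkeeping at the heart of the argument,'' and the route you sketch for it has two concrete problems. First, separating the $h$ points $\bfs a_1,\ldots,\bfs a_h$ by powers of a single coordinate (or of a single linear form, after a change of variables) costs degree $h-1$; combined with the Vandermonde in $X_r$ of degree up to $j_h-1$ needed to separate the points of $\mathcal{X}_{j_h}$, the monomials you would use have degree up to $h+j_h-2$, which exceeds $d$ as soon as, say, $j_1=\cdots=j_h=d$ and $h\ge 3$. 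This is exactly the degree obstruction you worry about, and it is fatal to the ``Vandermonde in one coordinate'' mechanism. Second, the inductive step assumes that invertibility of ${\sf M}$ ``forces its top-left $(h-1)\times(h-1)$ principal data to behave well''; this is false in general (a leading principal minor of an invertible matrix may vanish), and reordering the rows to fix it permutes the $\bfs a_i$ and hence breaks the ordering $j_1\ge\cdots\ge j_h$ that your Vandermonde blocks rely on. Finally, ``peeling off'' the block at $\bfs a_h$ requires showing that its row space meets the span of the other $J-j_h$ rows \emph{trivially}, not merely that it is not contained in it; the proposal does not address this.

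The paper avoids all three issues by exhibiting an explicit $J\times J$ submatrix ${\sf A}$ of the coefficient matrix, namely the columns indexed by the monomials $1,X_r,\ldots,X_r^{j_1-1}$ and $X_{i-1},X_{i-1}X_r,\ldots,X_{i-1}X_r^{j_i-1}$ for $2\le i\le h$. All of these have degree at most $j_1\le d$, because each point $\bfs a_i$ is tagged by a \emph{different} degree-one monomial $X_{i-1}$ (one per column of ${\sf M}$) rather than by powers of a single form. The matrix ${\sf A}$ then factors as a block-diagonal matrix of Vandermonde blocks ${\sf V}(\mathcal{X}_{j_i},j_1)$ (full rank $j_i$ since $j_i\le j_1$ and the points of $\mathcal{X}_{j_i}$ are distinct) times a block matrix patterned exactly on ${\sf M}$, whose full rank follows from the invertibility of ${\sf M}$ by block Gauss elimination. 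If you want to salvage your direct-sum viewpoint instead, the workable version is: for each $i_0$ the affine independence of the rows of ${\sf M}$ yields a degree-one form $\ell_{i_0}$ in $X_1,\ldots,X_{r-s}$ vanishing at every $\bfs a_i$ with $i\ne i_0$ but not at $\bfs a_{i_0}$; testing a vanishing linear combination of the evaluation functionals against $\ell_{i_0}\cdot G$ with $\deg G\le d-1$ kills all blocks but the $i_0$-th and reduces to a Vandermonde on $j_{i_0}\le d$ points. Either way, the missing rank argument must be supplied.
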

\begin{proof}
Observe that
\begin{align}
\{\bfs F_s\in\mathcal{F}_{d}^s:\bfs F_s(\bfs a_i,\bfs x)=0\
\forall\bfs x\in\mathcal{X}_{j_i}\textrm{ and }1\le i\le
h\}\qquad\notag\\\qquad=\prod_{i=1}^s \{F\in\mathcal{F}_{d}:F(\bfs
a_i,\bfs x)=0\ \forall\bfs x\in\mathcal{X}_{j_i}\textrm{ and }1\le
i\le h\}.\label{eq: product structure solution sets}\end{align}
As a consequence, we shall analyze the number of
$F\in\mathcal{F}_{d}$ satisfying
\begin{equation}\label{eq: condition Vandermonde}
F(\bfs a_i,\bfs x)=0\textrm{ for all }\bfs
x\in\mathcal{X}_{j_i}\textrm{ and }1\le i\le h.\end{equation}

We may consider \eqref{eq: condition Vandermonde} as a linear system
consisting on $j_1+\cdots+j_h$ equations on the ${d+r\choose r}$
coefficients of $F$, expressing $F$ in the monomial basis $\{\bfs
X^{\bfs\alpha}:=X_1^{\alpha_1}\cdots X_r^{\alpha_r}:|\bfs\alpha|\le
d\}$. We claim that the matrix of this system has full rank.

Up to a change of the monomial basis under consideration, we may
assume without loss of generality that $X_r$ separates the points of
$\mathcal{X}_{j_i}$ for each $i$. To prove the claim, it suffices to
show that the $(j_1+\cdots+j_h)\times(j_1+\cdots+j_h)$-submatrix
${\sf A}$ consisting of the $j_1+\cdots+j_h$ rows of this matrix and
the columns corresponding to the monomials
$\{1,X_r,\ldots,X_r^{j_1-1},
X_{i-1},X_{i-1}X_r,\ldots,X_{i-1}X_r^{j_i-1}\ (2\le i\le h)\}$ has
full rank $j_1+\cdots+j_h$.

Denote
$${\sf V}(\mathcal{X}_k,l):=
\left(
  \begin{array}{cccc}
    1 & x_1 & \ldots & x_1^{l-1} \\
    \vdots & \vdots &  & \vdots \\
    1 & x_k & \ldots & x_k^{l-1} \\
  \end{array}
\right),
$$
where $x_1,\ldots,x_k$ are the $r$th coordinates of the elements of
the set $\mathcal{X}_k$ (of cardinality $k$). Then the matrix ${\sf
A}$ can be expressed in the following way:
$${\sf A}=
\left(
  \begin{array}{cccc}
    {\sf V}(\mathcal{X}_{j_1},j_1) & a_{1,1}{\sf V}(\mathcal{X}_{j_1},j_2) & \ldots & a_{1,h-1}{\sf V}(\mathcal{X}_{j_1},j_h) \\
    {\sf V}(\mathcal{X}_{j_2},j_1) & a_{2,1}{\sf V}(\mathcal{X}_{j_2},j_2) & \ldots & a_{2,h-1}{\sf V}(\mathcal{X}_{j_2},j_h) \\
    \vdots & \vdots &  & \vdots \\
    {\sf V}(\mathcal{X}_{j_h},j_1) & a_{h,1}{\sf V}(\mathcal{X}_{j_h},j_2) & \ldots & a_{h,h-1}{\sf V}(\mathcal{X}_{j_h},j_h) \\
  \end{array}
\right).
$$
For positive integers $k,l$, let $\bfs 1_{k,l}$ be the $(k\times
l)$-matrix having 1 on the diagonal elements and 0 elsewhere. Then
the matrix ${\sf A}$ admits the following factorization:
$${\sf A}=
\left(
  \begin{array}{cccc}
   {\sf V}(\mathcal{X}_{j_1},j_1) &  \\
     & {\sf V}(\mathcal{X}_{j_2},j_1) \\
     &  & \ddots  \\
     &  &   & {\sf V}(\mathcal{X}_{j_h},j_1) \\
  \end{array}
\right)\cdot \left(
  \begin{array}{cccc}
    \bfs 1_{j_1,j_1} & a_{1,1}\bfs 1_{j_1,j_2} & \ldots & a_{1,h-1}\bfs 1_{j_1,j_h} \\
    \bfs 1_{j_1,j_1} & a_{2,1}\bfs 1_{j_1,j_2} & \ldots & a_{2,h-1}\bfs 1_{j_1,j_h} \\
    \vdots & \vdots &  & \vdots \\
    \bfs 1_{j_1,j_1} & a_{h,1}\bfs 1_{j_1,j_2} & \ldots & a_{h,h-1}\bfs 1_{j_1,j_h} \\
  \end{array}
\right).
$$

The fact that $j_1\ge j_i$ and $\mathcal{X}_{j_i}$ consists of $j_i$
pairwise-distinct elements implies that ${\sf
V}(\mathcal{X}_{j_i},j_1)$ has full rank $j_i$ for $1\le i\le h$. It
follows that the left-hand matrix in the factorization of ${\sf A}$
has full rank, and thus
$$\rank{\sf A}=
\rank \left(
  \begin{array}{cccc}
    \bfs 1_{j_1,j_1} & a_{1,1}\bfs 1_{j_1,j_2} & \ldots & a_{1,h-1}\bfs 1_{j_1,j_h} \\
    \bfs 1_{j_1,j_1} & a_{2,1}\bfs 1_{j_1,j_2} & \ldots & a_{2,h-1}\bfs 1_{j_1,j_h} \\
    \vdots & \vdots &  & \vdots \\
    \bfs 1_{j_1,j_1} & a_{h,1}\bfs 1_{j_1,j_2} & \ldots & a_{h,h-1}\bfs 1_{j_1,j_h} \\
  \end{array}
\right).$$

Up to a row permutation in the matrix ${\sf M}$ of the statement of
the proposition, we may assume that we can perform Gauss elimination
without permutations to obtain a row echelon form of ${\sf M}$. By
hypothesis, such a row echelon form is upper triangular and
invertible. Similarly, we can perform a process of block Gauss
elimination on the right-hand matrix in the factorization of ${\sf
A}$ {\em mutatis mutandis}. As a consequence, we obtain a block
matrix which is block upper triangular, with invertible full-rank
blocks on the diagonal. It follows that this matrix has full rank
$j_1+\cdots+j_j$, and thus so does ${\sf A}$. This finishes the
proof of our claim.

As a consequence of the claim, the $j_1+\cdots+j_h$ equations of
\eqref{eq: condition Vandermonde} are linearly independent, which
implies that the set of solutions of \eqref{eq: condition
Vandermonde} has codimension $j_1+\cdots+j_h$. It follows that
\eqref{eq: condition Vandermonde} has
$q^{\dim\mathcal{F}_{d}-(j_1+\cdots+j_h)}$ solutions in
$\mathcal{F}_{d}$. Finally, taking into account \eqref{eq: product
structure solution sets} we readily deduce the statement of the
proposition.
\end{proof}

\begin{remark}\label{rem: cardinality elements bad vandermonde rank}
With notations as in Proposition \ref{prop: ranks generalized
Vandermonde}, there are
$$(q^h-q)(q^h-q^2)\cdots(q^h-q^{h-1})q^{h(r-s-h+1)}=
(q^{h-1}-1)(q^{h-2}-1)\cdots(q-1)q^{h(r-s-\frac{h-1}{2})}$$
elements $\bfs a_1,\ldots,\bfs a_h$ satisfying the condition of
Proposition \ref{prop: ranks generalized Vandermonde}. Indeed, it is
easy to see that there are $(q^h-q)(q^h-q^2)\cdots(q^h-q^{h-1})$
possible choices of the $h\times (h-1)$ right submatrix of ${\sf M}$
such that ${\sf M}$ is invertible. Then the factor $q^{h(r-s-h+1)}$
takes into account that the last $r-s-(h-1)$ coordinates of $\bfs
a_1,\ldots,\bfs a_h$ can be arbitrarily chosen.
\end{remark}
%
%----------------------------------------------------------------------------
%----------------------------------------------------------------------------
%
\subsubsection{Estimates for $|S_{k,\bfs{\underline{a}}}|$}
Let $\bfs{\underline{a}}:=(\bfs a_1,\ldots,\bfs
a_h)\in(\fq^{r-s})^h$ be such that $(\bfs a_1,\ldots,\bfs a_k)$
satisfies the hypothesis of Proposition \ref{prop: ranks generalized
Vandermonde} for $2\le k\le h$. As asserted before, we aim to
estimate the number of $\bfs F_s$ with $Z(\bfs F_s(\bfs a_k,-))=0$
for $1\le k\le h-1$ and $Z(\bfs F_s(\bfs a_h,-))>0$. According to
\eqref{eq: S_h in terms of S_k}, this can be reduced to estimate the
number of elements of the set
$$S_{k,\bfs{\underline{a}}}:=\{\bfs
F_s\in\mathcal{F}_{d}^s: Z(\bfs F_s(\bfs a_j,-))>0\ for\ 1\le j\le
k\}$$
for $2\le k\le r-s+1$. Assume that $d$ is odd. Since
$$S_{k,\bfs{\underline{a}}}=\bigcup_{\bfs x^1\in\fq^s}\cdots
\bigcup_{\bfs x^k\in\fq^s}\{\bfs F_s\in\mathcal{F}_{d}^s: \bfs
F_s(\bfs a_j,\bfs x^j)=0\ for\ 1\le j\le k\},$$
by the Bonferroni inequalities, we have
\begin{align*}
\sum_{j_1=1}^{d+1}(-1)^{j_1-1}\sum_{\mathcal{X}_{j_1}\subset\fq^s}A_{\mathcal{X}_{j_1}}
\le |S_{k,\bfs{\underline{a}}}|
\le\sum_{j_1=1}^d(-1)^{j_1-1}\sum_{\mathcal{X}_{j_1}\subset\fq^s}A_{\mathcal{X}_{j_1}},\hskip2cm\\
A_{\mathcal{X}_{j_1}}:=A_{\mathcal{X}_{j_1}}^{\bfs{\underline{a}}}:=\Bigg|\bigcup_{\bfs
x^2\in\fq^s}\cdots \bigcup_{\bfs x^k\in\fq^s}\{\bfs
F_s\in\mathcal{F}_{d}^s:\bfs F_s(\bfs a_1,\mathcal{X}_{j_1})=\bfs
F_s(\bfs a_j,\bfs x^j)=0\ for\ 2\le j\le k\}\Bigg|.\end{align*}
We may rewrite the previous inequalities in the following form:
\begin{equation}\label{eq: recursive bound k=1}
\sum_{\stackrel{\scriptstyle j_1=1}{j_1\textrm{
odd}}}^{d+1}\sum_{\mathcal{X}_{j_1}\subset\fq^s}A_{\mathcal{X}_{j_1}}-
\sum_{\stackrel{\scriptstyle j_1=1}{j_1\textrm{
even}}}^{d+1}\sum_{\mathcal{X}_{j_1}\subset\fq^s}A_{\mathcal{X}_{j_1}}
\le |S_{k,\bfs{\underline{a}}}| \le\sum_{\stackrel{\scriptstyle
j_1=1}{j_1\textrm{
odd}}}^d\sum_{\mathcal{X}_{j_1}\subset\fq^s}A_{\mathcal{X}_{j_1}}-
\sum_{\stackrel{\scriptstyle j_1=1}{j_1\textrm{
even}}}^d\sum_{\mathcal{X}_{j_1}\subset\fq^s}A_{\mathcal{X}_{j_1}}.
\end{equation}

Next, we apply the Bonferroni inequalities to each
$A_{\mathcal{X}_{j_1}}$. More precisely, we have
\begin{align}\notag
\sum_{\stackrel{\scriptstyle j_2=1}{j_2\textrm{
odd}}}^{d+1}\sum_{\mathcal{X}_{j_2}\subset\fq^s}A_{\mathcal{X}_{j_1},\mathcal{X}_{j_2}}-
\!\!\sum_{\stackrel{\scriptstyle j_2=1}{j_2\textrm{
even}}}^{d+1}\sum_{\mathcal{X}_{j_2}\subset\fq^s}A_{\mathcal{X}_{j_1},\mathcal{X}_{j_2}}\!
&\le  \\A_{\mathcal{X}_{j_1}}&\le\sum_{\stackrel{\scriptstyle
j_2=1}{j_2\textrm{
odd}}}^d\sum_{\mathcal{X}_{j_2}\subset\fq^s}A_{\mathcal{X}_{j_1},\mathcal{X}_{j_2}}-
\sum_{\stackrel{\scriptstyle j_2=1}{j_2\textrm{
even}}}^d\sum_{\mathcal{X}_{j_2}\subset\fq^s}A_{\mathcal{X}_{j_1},\mathcal{X}_{j_2}},
\label{eq: recursive bound k=2}
\end{align}
where
\begin{align*}
&A_{\mathcal{X}_{j_1},\mathcal{X}_{j_2}}:=\\\ &\Bigg|\bigcup_{\bfs
x^3\in\fq^s}\cdots \bigcup_{\bfs x^k\in\fq^s}\{\bfs
F_s\in\mathcal{F}_{d}^s:\bfs F_s(\bfs a_1,\mathcal{X}_{j_1})=\bfs
F_s(\bfs a_2,\mathcal{X}_{j_2})=\bfs F_s(\bfs a_j,\bfs x^j)=0\ for\
3\le j\le k\}\Bigg|.\end{align*}

Proceeding in this way, we obtain upper and lower bounds for
$|S_{k,\bfs{\underline{a}}}|$ in terms of signed double sums of
terms of the form
$$A_{\mathcal{X}_{j_1},\ldots,\mathcal{X}_{j_k}}:=\big|\{\bfs
F_s\in\mathcal{F}_{d}^s:\bfs F_s(\bfs
a_1,\mathcal{X}_{j_1})=\cdots=\bfs F_s(\bfs
a_k,\mathcal{X}_{j_k})=0\}\big|.$$
According to Proposition \ref{prop: ranks generalized Vandermonde},
$A_{\mathcal{X}_{j_1},\ldots,\mathcal{X}_{j_k}}
=q^{s\dim\mathcal{F}_d-s(j_1+\cdots+j_k)}$, depending only of the
cardinality of the sets $\mathcal{X}_{j_1},\ldots,\mathcal{X}_{j_k}$
under consideration. Therefore, rearranging sums we may express the
upper and lower bounds for $|S_{k,\bfs{\underline{a}}}|$ as $2^k$
signed sums over $j_1,\ldots,j_k$ of terms of the form
$$\sum_{\mathcal{X}_{j_1}\subset\fq^s}\cdots
\sum_{\mathcal{X}_{j_k}\subset\fq^s}A_{\mathcal{X}_{j_1},\ldots,\mathcal{X}_{j_k}}=
{q^s\choose j_1}\cdots{q^s\choose
j_k}q^{s\dim\mathcal{F}_d-s(j_1+\cdots+j_k)}.$$

Denote by $U_k$ and $L_k$ the expressions of the upper and lower
bounds for $\frac{|S_{k,\bfs{\underline{a}}}|}{|\mathcal{F}_d^s|}$
obtained in this way. According to \eqref{eq: recursive bound k=1},
for $k=1$ one has
$$L_1\le \frac{|S_{1,\bfs a_1}|}{|\mathcal{F}_d^s|}\le U_1,$$
where
\begin{align*}
U_1&:=\sum_{\stackrel{\scriptstyle j_1=1}{j_1\textrm{
odd}}}^d{q^s\choose j_1}q^{-sj_1}- \sum_{\stackrel{\scriptstyle
j_1=1}{j_1\textrm{ even}}}^d{q^s\choose
j_1}q^{-sj_1}=\sum_{j_1=1}^d(-1)^{j_1-1}{q^s\choose
j_1}q^{-sj_1}=:s_d,\\
L_1&:=\sum_{\stackrel{\scriptstyle j_1=1}{j_1\textrm{
odd}}}^{d+1}{q^s\choose j_1}q^{-sj_1}- \sum_{\stackrel{\scriptstyle
j_1=1}{j_1\textrm{ even}}}^{d+1}{q^s\choose j_1}q^{-sj_1}
=\sum_{j_1=1}^{d+1}(-1)^{j_1-1}{q^s\choose j_1}q^{-sj_1}=:s_{d+1}.
\end{align*}

For $m\in\N$, denote
$$s_{m,\textrm{even}}:=\sum_{\stackrel{\scriptstyle
i=1}{i\textrm{ even}}}^m{q^s\choose i}q^{-si},\quad
s_{m,\textrm{odd}}:=\sum_{\stackrel{\scriptstyle i=1}{i\textrm{
odd}}}^m{q^s\choose i}q^{-si}.$$
We have the following result.
\begin{lemma}\label{lemma: recursive relation UL}
For $2\le k\le r-s+1$ and $d$ odd, we have
\begin{align*}
U_k&=U_{k-1} \sum_{\stackrel{\scriptstyle i=1}{i\textrm{
odd}}}^d{q^s\choose i}q^{-si}- L_{k-1}\sum_{\stackrel{\scriptstyle
i=1}{i\textrm{ even}}}^d{q^s\choose i}q^{-si} =U_{k-1}
s_{d,\textrm{odd}}- L_{k-1}s_{d,\textrm{even}},\\
L_k&=L_{k-1} \sum_{\stackrel{\scriptstyle i=1}{i\textrm{
odd}}}^{d+1}{q^s\choose i}q^{-si}-
U_{k-1}\sum_{\stackrel{\scriptstyle i=1}{i\textrm{
even}}}^{d+1}{q^s\choose i}q^{-si}=L_{k-1} s_{d+1,\textrm{odd}}-
U_{k-1}s_{d+1,\textrm{even}}.
\end{align*}
On the other hand, for $d$ even,
\begin{align*}
U_k&=U_{k-1} \sum_{\stackrel{\scriptstyle i=1}{i\textrm{
odd}}}^{d+1}{q^s\choose i}q^{-si}-
L_{k-1}\sum_{\stackrel{\scriptstyle i=1}{i\textrm{
even}}}^{d+1}{q^s\choose i}q^{-si} =U_{k-1}
s_{d+1,\textrm{odd}}- L_{k-1}s_{d+1,\textrm{even}},\\
L_k&=L_{k-1} \sum_{\stackrel{\scriptstyle i=1}{i\textrm{
odd}}}^d{q^s\choose i}q^{-si}- U_{k-1}\sum_{\stackrel{\scriptstyle
i=1}{i\textrm{ even}}}^d{q^s\choose i}q^{-si}=L_{k-1}
s_{d,\textrm{odd}}- U_{k-1}s_{d,\textrm{even}}.
\end{align*}
\end{lemma}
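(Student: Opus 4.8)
The plan is to establish each recursion by isolating the outermost layer of the iterated Bonferroni procedure that defines $U_k$ and $L_k$, and then recognizing the inner layer as $U_{k-1}$ or $L_{k-1}$. I would describe the argument for $d$ odd; the case $d$ even is word for word the same after interchanging the truncation thresholds $d$ and $d+1$, which is precisely the interchange of the parities for which a Bonferroni truncation yields an upper versus a lower bound.

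First I would unwind the definition of $U_k$. By construction it is the upper bound for $|S_{k,\bfs{\underline{a}}}|/|\mathcal{F}_d^s|$ obtained by applying the Bonferroni inequality to the union over $\bfs x^1$ as in \eqref{eq: recursive bound k=1}, and then bounding each inner quantity $A_{\mathcal{X}_{j_1}}$ by the same procedure applied to the remaining union over $\bfs x^2,\ldots,\bfs x^k$. The crucial bookkeeping point, which I would emphasize, is that in the outer alternating sum the block $\sum_{\mathcal{X}_{j_1}}A_{\mathcal{X}_{j_1}}$ carries a $+$ sign for odd $j_1$ and a $-$ sign for even $j_1$; to keep the whole expression an upper bound one must therefore replace $A_{\mathcal{X}_{j_1}}$ by its inner \emph{upper} bound when $j_1$ is odd and by its inner \emph{lower} bound when $j_1$ is even. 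This is exactly the mechanism that couples $U_k$ to both $U_{k-1}$ and $L_{k-1}$.

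Next I would identify the inner bounds explicitly. By Proposition \ref{prop: ranks generalized Vandermonde} each term satisfies $A_{\mathcal{X}_{j_1},\ldots,\mathcal{X}_{j_k}}=q^{s\dim\mathcal{F}_d-s(j_1+\cdots+j_k)}$, so it depends only on the cardinalities $j_1,\ldots,j_k$. Dividing by $|\mathcal{F}_d^s|$, which normalizes each such term to $q^{-s(j_1+\cdots+j_k)}$ exactly as in the computation of $U_1=s_d$, and summing over $\mathcal{X}_{j_2},\ldots,\mathcal{X}_{j_k}$ turns every term produced by the inner procedure into $q^{-sj_1}\prod_{i=2}^k\binom{q^s}{j_i}q^{-sj_i}$. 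The factor $q^{-sj_1}$ is common to all of them and pulls out of the inner signed sum, while the remaining signed sum over $(j_2,\ldots,j_k)$ coincides, level by level, with the $(k-1)$-fold iterated Bonferroni expression defining $U_{k-1}$ (for the inner upper bound) and $L_{k-1}$ (for the inner lower bound), since the sign pattern and the summand shape depend only on the number of levels and on the parity of $d$, not on which $\bfs a_i$ occur. Hence, independently of $\mathcal{X}_{j_1}$, the normalized inner upper and lower bounds on $A_{\mathcal{X}_{j_1}}$ equal $q^{-sj_1}U_{k-1}$ and $q^{-sj_1}L_{k-1}$ respectively; note that only the invertibility of the full tuple $(\bfs a_1,\ldots,\bfs a_k)$ is used here, which is part of the standing hypothesis on $\bfs{\underline{a}}$.

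Finally I would perform the outer summation. Summing $q^{-sj_1}U_{k-1}$ over the $\binom{q^s}{j_1}$ choices of $\mathcal{X}_{j_1}$ and over odd $j_1\le d$, and subtracting the analogous sum of $q^{-sj_1}L_{k-1}$ over even $j_1\le d$, gives
$$U_k=U_{k-1}\sum_{\substack{i=1\\ i\text{ odd}}}^{d}\binom{q^s}{i}q^{-si}-L_{k-1}\sum_{\substack{i=1\\ i\text{ even}}}^{d}\binom{q^s}{i}q^{-si}=U_{k-1}\,s_{d,\textrm{odd}}-L_{k-1}\,s_{d,\textrm{even}},$$
as claimed. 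The relation for $L_k$ follows identically from the \emph{lower} Bonferroni bound at the first level, which truncates at $d+1$ and, by the same sign analysis, attaches the inner lower bound $L_{k-1}$ to the odd-$j_1$ terms and the inner upper bound $U_{k-1}$ to the even-$j_1$ terms, yielding $L_k=L_{k-1}\,s_{d+1,\textrm{odd}}-U_{k-1}\,s_{d+1,\textrm{even}}$. I expect the only genuine obstacle to be the consistent sign bookkeeping just described—pairing each subterm with the inner bound of the correct direction—since the factorization of the inner sums is immediate once the cardinality-only dependence of Proposition \ref{prop: ranks generalized Vandermonde} is invoked.
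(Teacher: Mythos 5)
Your proposal is correct and follows essentially the same route as the paper: the paper's proof is an induction on $k$ whose inductive step is exactly your ``peel off the outermost Bonferroni layer'' argument, resting on the same two ingredients --- the cardinality-only dependence $A_{\mathcal{X}_{j_1},\ldots,\mathcal{X}_{j_k}}=q^{s\dim\mathcal{F}_d-s(j_1+\cdots+j_k)}$ from Proposition \ref{prop: ranks generalized Vandermonde} and the sign bookkeeping that pairs positively-signed outer terms with inner upper bounds and negatively-signed ones with inner lower bounds. The only cosmetic difference is that the paper spells out the base case $k=2$ explicitly before stating the general step.
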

\begin{proof}
Assume that $d$ is odd. Arguing by induction on $k$, for $k=2$ we
have
$A_{\mathcal{X}_{j_1},\mathcal{X}_{j_2}}=q^{s\dim\mathcal{F}_d-s(j_1+j_2)}=
|\mathcal{F}_d^s|q^{-s(j_1+j_2)}$. Therefore, by \eqref{eq:
recursive bound k=2} we have
\begin{align*}
q^{-sj_1}\Bigg(\sum_{\stackrel{\scriptstyle j_2=1}{j_2\textrm{
odd}}}^{d+1}{q^s\choose j_2}q^{-sj_2}-&
\!\!\sum_{\stackrel{\scriptstyle j_2=1}{j_2\textrm{
even}}}^{d+1}{q^s\choose j_2}q^{-sj_2}\Bigg)\! \\&\le
\frac{A_{\mathcal{X}_{j_1}}}{|\mathcal{F}_d^s|} \le
q^{-sj_1}\Bigg(\sum_{\stackrel{\scriptstyle j_2=1}{j_2\textrm{
odd}}}^d{q^s\choose j_2}q^{-sj_2}- \sum_{\stackrel{\scriptstyle
j_2=1}{j_2\textrm{ even}}}^d{q^s\choose j_2}q^{-sj_2}\Bigg).
\end{align*}
As a consequence, by the definition of $L_1$ and $U_1$ it follows
that
$$q^{-sj_1}L_1 \le
\frac{A_{\mathcal{X}_{j_1}}}{|\mathcal{F}_d^s|} \le q^{-sj_1}U_1.$$
Using these bounds in \eqref{eq: recursive bound k=1} we obtain
$$\sum_{\stackrel{\scriptstyle j_1=1}{j_1\textrm{
odd}}}^{d+1}{q^s\choose j_1}q^{-sj_1}L_1-
\!\!\!\sum_{\stackrel{\scriptstyle j_1=1}{j_1\textrm{
even}}}^{d+1}{q^s\choose j_1}q^{-sj_1}U_1 \le
\frac{|S_{k,\bfs{\underline{a}}}|}{|\mathcal{F}_d^s|}
\le\sum_{\stackrel{\scriptstyle j_1=1}{j_1\textrm{
odd}}}^d{q^s\choose j_1}q^{-sj_1}U_1-\!\!\!
\sum_{\stackrel{\scriptstyle j_1=1}{j_1\textrm{ even}}}^d{q^s\choose
j_1}q^{-sj_1}L_1.$$
This proves the assertion for $k=2$.

Now, for $k>2$, taking into account that all the conditions imposed
on the $\bfs F_s$ are linearly independent, and the definition of
$L_k$ and $U_k$, by \eqref{eq: recursive bound k=1} we see that
\begin{align*}
\frac{|S_{k,\bfs{\underline{a}}}|}{|\mathcal{F}_d^s|}
&\le\sum_{\stackrel{\scriptstyle j_1=1}{j_1\textrm{
odd}}}^d\sum_{\mathcal{X}_{j_1}\subset\fq^s}\frac{A_{\mathcal{X}_{j_1}}}{|\mathcal{F}_d^s|}-
\sum_{\stackrel{\scriptstyle j_1=1}{j_1\textrm{
even}}}^d\sum_{\mathcal{X}_{j_1}\subset\fq^s}\frac{A_{\mathcal{X}_{j_1}}}{|\mathcal{F}_d^s|}
\\
&\le\sum_{\stackrel{\scriptstyle j_1=1}{j_1\textrm{
odd}}}^d{q^s\choose j_1}q^{-sj_1}U_{k-1}-
\sum_{\stackrel{\scriptstyle j_1=1}{j_1\textrm{ even}}}^d{q^s\choose
j_1}q^{-sj_1}L_{k-1}=U_{k-1} s_{d,\textrm{odd}}-
L_{k-1}s_{d,\textrm{even}}.
\end{align*}
This proves the assertion on $U_k$. The assertion on $L_k$ is showed
by a similar argument.

Finally, for $d$ even the bounds are obtained arguing as above {\em
mutatis mutandis}.
\end{proof}

For $d$ odd, we may express the recursive relation between the
$U_k,L_k$ in the following form:
$$\left(
    \begin{array}{c}
      U_k \\
      L_k \\
    \end{array}
  \right)={\sf A}_d
\left(
    \begin{array}{c}
      U_{k-1} \\
      L_{k-1} \\
    \end{array}
  \right),\quad {\sf A}_d:=
\left(
  \begin{array}{cc}
   s_{d,\textrm{odd}} & -s_{d,\textrm{even}} \\
 \!\! -s_{d+1,\textrm{even}} & s_{d+1,\textrm{odd}} \\
  \end{array}
\right)=\left(
  \begin{array}{cc}
   s_{d,\textrm{odd}} & -s_{d,\textrm{even}} \\
 \!\! -s_{d+1,\textrm{even}} & s_{d,\textrm{odd}} \\
  \end{array}
\right).
$$
We observe that
$${\sf C}_d:=\left(
  \begin{array}{cc}
   s_{d+1,\textrm{odd}} & -s_{d+1,\textrm{even}} \\
 \!\! -s_{d+1,\textrm{even}} & s_{d+1,\textrm{odd}} \\
  \end{array}
\right)\le {\sf A}_d\le \left(
  \begin{array}{cc}
   s_{d,\textrm{odd}} & -s_{d,\textrm{even}} \\
 \!\! -s_{d,\textrm{even}} & s_{d,\textrm{odd}} \\
  \end{array}
\right)=:{\sf B}_d,$$
where the signs $\le$ are understood componentwise. It follows that
$$ {\sf C}_d^{k-1} \left(
    \begin{array}{c}
      s_d \\
    \!\! s_{d+1}\!\! \\
    \end{array}
  \right)\le \left(
    \begin{array}{c}
      U_k \\
      L_k \\
    \end{array}
  \right)
={\sf A}_d^{k-1} \left(
    \begin{array}{c}
      U_1 \\
      L_1 \\
    \end{array}
  \right)
= {\sf A}_d^{k-1} \left(
    \begin{array}{c}
      s_d \\
    \!\! s_{d+1}\!\! \\
    \end{array}
  \right)\le {\sf B}_d^{k-1} \left(
    \begin{array}{c}
      s_d \\
    \!\! s_{d+1}\!\! \\
    \end{array}
  \right).
$$

${\sf B}_d$ is a symmetric positive-definite matrix. By considering
its spectral decomposition, one readily deduces the following
expression for its $(k-1)$th power, which is expressed in terms of
its eigenvalues $s_d^+:=s_{d,\textrm{odd}}+s_{d,\textrm{even}}$ and
$s_d=s_{d,\textrm{odd}}-s_{d,\textrm{even}}$:
$${\sf B}_d^{k-1}=\left(
  \begin{array}{cc}
   \frac{1}{2}(s_d^+)^{k-1}+\frac{1}{2}s_d^{k-1} & -\frac{1}{2}(s_d^+)^{k-1}+\frac{1}{2}s_d^{k-1} \\
 \!\!-\frac{1}{2}(s_d^+)^{k-1}+\frac{1}{2}s_d^{k-1} & \frac{1}{2}(s_d^+)^{k-1}+\frac{1}{2}s_d^{k-1} \\
  \end{array}
\right).$$
We conclude that
\begin{align*}
U_k&\le \big(\mbox{$\frac{1}{2}$}(s_d^+)^{k-1}
+\mbox{$\frac{1}{2}$}s_d^{k-1}\big)s_d+\big(-\mbox{$\frac{1}{2}$}(s_d^+)^{k-1}
+\mbox{$\frac{1}{2}$}s_d^{k-1}\big)s_{d+1}\\[1ex]
&=
\mbox{$\frac{1}{2}$}(s_d^+)^{k-1}(s_d-s_{d+1})+\mbox{$\frac{1}{2}$}s_d^{k-1}(s_d+s_{d+1})\\[1ex]
&= s_d^k+\mbox{$\frac{1}{2}{q^s\choose
d+1}$}q^{-s(d+1)}\big((s_d^+)^{k-1}-s_d^{k-1}\big).
\end{align*}
Similarly,
\begin{align*}
L_k&\ge \mbox{$\frac{1}{2}$}(s_{d+1}^+)^{k-1}(s_{d+1}-s_d)
+\mbox{$\frac{1}{2}$}s_{d+1}^{k-1}(s_d+s_{d+1})\\[1ex]
&= s_{d+1}^k-\mbox{$\frac{1}{2}{q^s\choose
d+1}$}q^{-s(d+1)}\big((s_{d+1}^+)^{k-1}-s_{d+1}^{k-1}\big).
\end{align*}
Next we obtain simpler upper and lower bounds for $|S_{k,(\bfs
a_1,\ldots,\bfs a_k)}|$.
\begin{proposition}\label{prop: bound for S_k}
Recall the notations
$s_d^+:=s_{d,\textrm{odd}}+s_{d,\textrm{even}}$,
$s_d:=s_{d,\textrm{odd}}-s_{d,\textrm{even}}$ and
$t_{d+1}:={q^s\choose d+1}q^{-s(d+1)}$. For $2\le k\le r-s+1$, we
have
\begin{align*}
\bigg|\frac{|S_{k,(\bfs a_1,\ldots,\bfs a_k)}|}{|\mathcal{F}_d^s|}-
s_d^k\bigg|&\le\mbox{$\frac{t_{d+1}}{2}$}\big((s_{d+1}^+)^{k-1}+(2k-1)s_d^{k-1}\big)\
\textrm{ for \mbox{$d$} odd},\\[1ex]%$$
%%
%On the other hand, for $d$ even,
%
%$$
\bigg|\frac{|S_{k,(\bfs a_1,\ldots,\bfs a_k)}|}{|\mathcal{F}_d^s|}-
s_{d+1}^k\bigg|&\le\mbox{$\frac{t_{d+1}}{2}$}\big((s_{d+1}^+)^{k-1}+(2k-1)s_{d+1}^{k-1}\big)\
\textrm{ for \mbox{$d$} even}.
\end{align*}
\end{proposition}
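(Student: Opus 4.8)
The plan is to feed the two explicit one-sided estimates for $U_k$ and $L_k$ obtained just above into the sandwich $L_k\le |S_{k,(\bfs a_1,\ldots,\bfs a_k)}|/|\mathcal{F}_d^s|\le U_k$, and then to massage each side into the symmetric form of the statement. Throughout I treat $d$ odd; the even case is identical \emph{mutatis mutandis}. I first record the two relations on which everything hinges: since $d$ is odd, adjoining the index $d+1$ contributes $(-1)^{d}t_{d+1}=-t_{d+1}$ to the alternating sum and $+t_{d+1}$ to the sum of absolute values, whence
$$s_{d+1}=s_d-t_{d+1},\qquad s_{d+1}^+=s_d^++t_{d+1}.$$
In particular $0\le s_{d+1}\le s_d$ and $0\le s_d^+\le s_{d+1}^+$, the nonnegativity of $s_d,s_{d+1}$ being guaranteed by $q^s>d$ (they approximate the positive quantities $\mu_d,\mu_{d+1}$ up to $O(1/q^s)$ by Corollary \ref{coro: probability 1 specialization - asymptotic}).

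For the upper estimate I would start from $U_k-s_d^k\le \frac{t_{d+1}}{2}\big((s_d^+)^{k-1}-s_d^{k-1}\big)$. Using $s_d^+\le s_{d+1}^+$ to replace $(s_d^+)^{k-1}$ by $(s_{d+1}^+)^{k-1}$, and the trivial bound $-s_d^{k-1}\le (2k-1)s_d^{k-1}$ (valid since $s_d\ge 0$), yields at once $|S_{k,(\bfs a_1,\ldots,\bfs a_k)}|/|\mathcal{F}_d^s|-s_d^k\le \frac{t_{d+1}}{2}\big((s_{d+1}^+)^{k-1}+(2k-1)s_d^{k-1}\big)$. This direction is routine.

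The real work is the lower estimate. From $L_k-s_d^k\ge s_{d+1}^k-s_d^k-\frac{t_{d+1}}{2}\big((s_{d+1}^+)^{k-1}-s_{d+1}^{k-1}\big)$, the desired bound is equivalent, after cancelling the common $-\frac{t_{d+1}}{2}(s_{d+1}^+)^{k-1}$, to
$$s_{d+1}^k-s_d^k+\frac{t_{d+1}}{2}\,s_{d+1}^{k-1}+\frac{t_{d+1}}{2}(2k-1)s_d^{k-1}\ge 0.$$
Writing $a:=s_d$ and $b:=s_{d+1}=a-t_{d+1}$, so that $t_{d+1}=a-b\ge 0$, and factoring $a^k-b^k=(a-b)\sum_{j=0}^{k-1}a^{k-1-j}b^j$, I would divide through by $a-b$ (the case $a=b$ being trivial) to reduce the claim to the elementary inequality
$$\frac{2k-1}{2}\,a^{k-1}+\frac{1}{2}\,b^{k-1}\ge \sum_{j=0}^{k-1}a^{k-1-j}b^j.$$
This I prove by isolating the extreme summands $a^{k-1}$ (at $j=0$) and $b^{k-1}$ (at $j=k-1$) and bounding each of the $k-2$ middle terms $a^{k-1-j}b^j$ by $a^{k-1}$ via $0\le b\le a$ (the middle sum being empty when $k=2$); the right-hand side is then at most $(k-1)a^{k-1}+b^{k-1}$, and subtracting leaves exactly $\frac{1}{2}(a^{k-1}-b^{k-1})\ge 0$.

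The even case runs identically once the centre is taken to be $s_{d+1}^k$ and the roles are interchanged through $s_d=s_{d+1}-t_{d+1}$, i.e.\ $a:=s_{d+1}\ge b:=s_d\ge 0$, together with the analogous explicit bounds on $U_k$ and $L_k$ for $d$ even. The only genuinely delicate point is the final symmetric-function inequality: the first-order term in $t_{d+1}$ cancels exactly (since $-k\,a^{k-1}t_{d+1}$ is balanced by $\frac{t_{d+1}}{2}(s_{d+1}^{k-1}+(2k-1)s_d^{k-1})\approx k\,a^{k-1}t_{d+1}$), so one must extract the cancellation by hand rather than through a crude term-by-term bound, and the factor $2k-1$ is precisely what is needed to absorb the $k$ summands of $\sum_j a^{k-1-j}b^j$ after this cancellation.
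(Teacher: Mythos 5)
Your proposal is correct and follows essentially the same route as the paper: both arguments feed the explicit bounds on $U_k$ and $L_k$ into the sandwich, dispose of the upper estimate by monotonicity, and reduce the lower estimate to the factorization $s_d^k-s_{d+1}^k=t_{d+1}\sum_{j=0}^{k-1}s_d^{k-1-j}s_{d+1}^{j}$ followed by bounding the symmetric sum term by term by $s_d^{k-1}$ (which is exactly where the factor $2k-1$ comes from). Your reformulation after dividing by $a-b$ is just a repackaging of the paper's chain of inequalities, so there is nothing to add.
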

\begin{proof}
Suppose that $d$ is odd. By the lower bound for $L_k$ above, we have
\begin{align*}
\frac{|S_{k,(\bfs a_1,\ldots,\bfs a_k)}|}{|\mathcal{F}_d^s|} &\ge
s_d^k+s_{d+1}^k-s_d^k-\mbox{$\frac{t_{d+1}}{2}$}\big((s_{d+1}^+)^{k-1}-s_{d+1}^{k-1}\big)\\
&\ge s_d^k-t_{d+1}(s_{d+1}^{k-1}+\cdots+s_d^{k-1})
-\mbox{$\frac{t_{d+1}}{2}$}\big((s_{d+1}^+)^{k-1}-s_{d+1}^{k-1}\big)\\[1ex]
&=
s_d^k-\mbox{$\frac{t_{d+1}}{2}$}\big((s_{d+1}^+)^{k-1}+s_{d+1}^{k-1}
+2s_{d+1}^{k-2}s_d+\cdots+2s_d^{k-1}\big)\\[1ex]
&\ge
s_d^k-\mbox{$\frac{t_{d+1}}{2}$}\big((s_{d+1}^+)^{k-1}+(2k-1)s_d^{k-1}\big).
\end{align*}
On the other hand,
$$
\frac{|S_{k,(\bfs a_1,\ldots,\bfs a_k)}|}{|\mathcal{F}_d^s|} \le
s_d^k+\mbox{$\frac{t_{d+1}}{2}$}\big((s_d^+)^{k-1}-s_d^{k-1}\big)
\le
s_d^k+\mbox{$\frac{t_{d+1}}{2}$}\big((s_{d+1}^+)^{k-1}+(2k-1)s_d^{k-1}\big).
$$
For $d$ even, the estimate follows similarly.
\end{proof}
%
%----------------------------------------------------------------------------
%----------------------------------------------------------------------------
%
\subsubsection{An estimate for $|S_{h,\bfs{\underline{a}}}^*|$}
Now we are able to estimate the probability that $h$ random choices
are made until we reach a vertical strip with $\fq$-rational
solutions of the system under consideration.

Recall that, given $\bfs{\underline{a}}:=(\bfs a_1,\ldots,\bfs a_h)$
which satisfies the hypothesis of Proposition \ref{prop: ranks
generalized Vandermonde}, we aim to estimate the probability of the
set
\begin{equation}\label{eq: def S_h^*}
S_{h,\bfs{\underline{a}}}^*:=\{\bfs F_s\in\mathcal{F}_{d}^s: Z(\bfs
F_s(\bfs a_i,-))=0\ (1\le i\le h-1),\ Z(\bfs F_s(\bfs a_h,-))>0\}.
\end{equation}
For this purpose, according to \eqref{eq: S_h in terms of S_k}, we
have
\begin{align*}
|S_{h,\bfs{\underline{a}}}^*|&=|S_{1,\bfs
a_h}^*|+\sum_{k=1}^{h-1}(-1)^k\sum_{1\le i_1<\cdots<i_k<
h}\big|S_{k+1,(\bfs a_{i_1},\ldots,\bfs a_{i_k},\bfs a_h)}\big|.
\end{align*}
We have the following result.
\begin{theorem}\label{th: estimate probability S_{h,a}^*}
Let $s<d$ and denote
$$s_d:=\sum_{i=1}^d(-1)^{i-1}\mbox{${q^s\choose i}$}q^{-si},\
s_{d+1}^+:=\sum_{i=1}^{d+1}\mbox{${q^s\choose i}$}q^{-si}\textrm{
and }\ t_{d+1}:=\mbox{${q^s\choose d+1}$}q^{-s(d+1)}.$$ For $2\le
h\le r-s$, we have
\begin{align*}
\Bigg|\frac{|S_{h,\bfs{\underline{a}}}^*|}{|\mathcal{F}_d^s|}-
s_d(1-s_d)^{h-1}\Bigg|&\le
t_{d+1}\big((1+s_{d+1}^+)^{h-1}+\mbox{$\frac{1}{2}$} \big)\ \textrm{
for }d\textrm{ odd},\\[1ex]
%
%On the other hand,
%
\Bigg|\frac{|S_{h,\bfs{\underline{a}}}^*|}{|\mathcal{F}_d^s|}-
s_{d+1}(1-s_{d+1})^{h-1}\Bigg|&\le
t_{d+1}\big((1+s_{d+1}^+)^{h-1}+\mbox{$\frac{1}{2}$} \big)\ \textrm{
for }d\textrm{ even}.
\end{align*}
\end{theorem}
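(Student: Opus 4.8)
The plan is to substitute the per-term estimates of Proposition~\ref{prop: bound for S_k} into the inclusion--exclusion identity \eqref{eq: S_h in terms of S_k}, extract the binomial main term, and then collapse the remaining error with two elementary binomial identities. Throughout I treat the case $d$ odd; the case $d$ even is identical after replacing $s_d$ by $s_{d+1}$.

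First I would divide \eqref{eq: S_h in terms of S_k} by $|\mathcal{F}_d^s|$. Under the standing hypothesis on $\bfs{\underline a}$, every sub-tuple occurring in \eqref{eq: S_h in terms of S_k} satisfies the invertibility condition of Proposition~\ref{prop: ranks generalized Vandermonde}, so Proposition~\ref{prop: bound for S_k} applies uniformly to each of the $\binom{h-1}{k}$ sets $S_{k+1,(\bfs a_{i_1},\ldots,\bfs a_{i_k},\bfs a_h)}$ that appear with sign $(-1)^k$. Writing $p_0:=|S_{1,\bfs a_h}^*|/|\mathcal{F}_d^s|$, and letting $p_k$ denote any of the normalized cardinalities $|S_{k+1,(\ldots)}|/|\mathcal{F}_d^s|$ for $1\le k\le h-1$, the identity becomes a signed binomial sum whose leading part I evaluate by the binomial theorem:
\[
\sum_{k=0}^{h-1}(-1)^k\binom{h-1}{k}s_d^{\,k+1}=s_d(1-s_d)^{h-1},
\]
which is precisely the center of the claimed interval.

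Next I would bound the error $\sum_{k=0}^{h-1}\binom{h-1}{k}\,|p_k-s_d^{\,k+1}|$ term by term. For $k=0$ I use $S_{1,\bfs a_h}^*=S_{1,\bfs a_h}$ and Theorem~\ref{th: probability 1 specialization} to get $|p_0-s_d|\le t_{d+1}$; for $1\le k\le h-1$ I apply Proposition~\ref{prop: bound for S_k} with its index equal to $k+1$ (legitimate since $2\le k+1\le h\le r-s+1$), obtaining $|p_k-s_d^{\,k+1}|\le \frac{t_{d+1}}{2}\big((s_{d+1}^+)^k+(2k+1)s_d^{\,k}\big)$. Summing against $\binom{h-1}{k}$ and using the two identities $\sum_{k=0}^{h-1}\binom{h-1}{k}(s_{d+1}^+)^k=(1+s_{d+1}^+)^{h-1}$ and $\sum_{k=0}^{h-1}\binom{h-1}{k}(2k+1)s_d^{\,k}=(1+s_d)^{h-2}\big(1+(2h-1)s_d\big)$, the total error collapses to $\frac{t_{d+1}}{2}\big[(1+s_{d+1}^+)^{h-1}+(1+s_d)^{h-2}\big(1+(2h-1)s_d\big)\big]$, once the additive $t_{d+1}$ from the $k=0$ term cancels the two stray $-\frac{1}{2}t_{d+1}$ contributions produced by removing the $k=0$ summands from the two identities.

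The hard part is the final comparison that matches this to the clean bound of the statement, namely the elementary but delicate inequality
\[
(1+s_d)^{h-2}\big(1+(2h-1)s_d\big)\le (1+s_{d+1}^+)^{h-1}+1,
\]
which replaces the second bracket by $(1+s_{d+1}^+)^{h-1}+1$ and thereby yields $t_{d+1}\big((1+s_{d+1}^+)^{h-1}+\frac{1}{2}\big)$. This inequality cannot be reached by crudely replacing $s_d$ by $s_{d+1}^+$ inside the weight $2k+1$, nor by $2k+1\le 3^k$ (both are already too lossy for moderate $h$); it genuinely rests on the size gap $s_{d+1}^+-s_d=2\,s_{d,\textrm{even}}+t_{d+1}$, which makes $1+s_{d+1}^+$ large enough to dominate the linear factor $1+(2h-1)s_d$, while the additive $+1$ on the right is exactly what rescues the extreme cases $h=2,3$. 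I would establish it from the explicit bounds $s_d<1$ and $s_{d+1}^+<e-1$ valid for $q^s>d$ and $s<d$: the few smallest values of $h$ by direct substitution, and the general case by comparing the ratio $\big((1+s_{d+1}^+)/(1+s_d)\big)^{h-1}$ against the slowly growing factor $1+(2h-1)s_d$. The even-$d$ case follows mutatis mutandis, with $s_{d+1}$ in place of $s_d$.
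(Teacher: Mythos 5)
Your proposal is correct and follows essentially the same route as the paper's proof: expand $s_d(1-s_d)^{h-1}$ by the binomial theorem against the inclusion--exclusion identity \eqref{eq: S_h in terms of S_k}, apply Theorem \ref{th: probability 1 specialization} for $k=0$ and Proposition \ref{prop: bound for S_k} for $k\ge 1$, evaluate the two binomial sums in closed form, and close with the comparison $\big(1+(2h-1)s_d\big)(1+s_d)^{h-2}\le(1+s_{d+1}^+)^{h-1}+1$, which the paper likewise establishes by checking small $h$ directly and handling the rest via the ratio $\big((1+s_{d+1}^+)/(1+s_d)\big)^{h-1}$ and the gap $s_{d+1}^+-s_d=2s_{d,\textrm{even}}+t_{d+1}$. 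Your identification of that last inequality as the genuinely delicate step, and your reason why crude substitutions fail, match the paper exactly.
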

\begin{proof}
Assume that $d$ is odd. Observe that
$$s_d(1-s_d)^{h-1}=s_d+\sum_{k=1}^{h-1}\binom{h-1}{k}(-1)^ks_d^{k+1}
=s_d+\sum_{k=1}^{h-1}(-1)^k\sum_{1\le i_1<\cdots<i_k< h}s_d^{k+1}.
$$
According to Theorem \ref{th: probability 1 specialization} and
Proposition \ref{prop: bound for S_k},
$$\Bigg|\frac{\big|S_{k+1,(\bfs a_{i_1},\ldots,\bfs a_{i_k},\bfs
a_h)}\big|}{|\mathcal{F}_d^s|}-
s_d^{k+1}\Bigg|\le\mbox{$\frac{t_{d+1}}{2}$}\big((s_{d+1}^+)^k+(2k+1)s_d^k\big)$$
for any $1\le i_1<\cdots<i_k< h$. It follows that
\begin{align*}
\Bigg|\frac{|S_{h,\bfs{\underline{a}}}^*|}{|\mathcal{F}_d^s|}-s_d(1-s_d)^{h-1}\Bigg|&\le
\mbox{$\frac{t_{d+1}}{2}$}\sum_{k=0}^{h-1}\mbox{${h-1\choose
k}$}\big((s_{d+1}^+)^k+(2k+1)s_d^k\big).
\end{align*}

Now we bound the sums in the right-hand side of the previous
inequality. First, we have
$$\sum_{k=0}^{h-1}\mbox{${h-1\choose
k}$}(s_{d+1}^+)^k=(1+s_{d+1}^+)^{h-1}.$$
On the other hand,
\begin{align*}
\sum_{k=0}^{h-1}\mbox{${h-1\choose k}$}(2k+1)s_d^k&=
2s_d\sum_{k=1}^{h-1}\mbox{${h-1\choose k}$}ks_d^{k-1}+
\sum_{k=0}^{h-1}\mbox{${h-1\choose k}$}s_d^k&
\\
&=  2s_d(h-1)(1+s_d)^{h-2}+(1+s_d)^{h-1}\\& =
\big((2h-1)s_d+1\big)(1+s_d)^{h-2}.
\end{align*}
We conclude that
$$\Bigg|\frac{|S_{h,\bfs{\underline{a}}}^*|}{|\mathcal{F}_d^s|}-
s_d(1-s_d)^{h-1}\Bigg|\le\mbox{$\frac{t_{d+1}}{2}$}
\big((1+s_{d+1}^+)^{h-1}+\big((2h-1)s_d+1\big)(1+s_d)^{h-2}\big).$$

We claim that
$$-1+\big((2h-1)s_d+1\big)(1+s_d)^{h-2}\le (1+s_{d+1}^+)^{h-1}.$$
Indeed, for $d=3$ and for $h=2,3,4,5$ this can be checked by a
direct computation. For $d\ge 5$ and $h\ge 4$, the claim is a
consequence of the inequality $(2h-1)(1+s_d)^{h-1}\le
(1+s_{d+1}^+)^{h-1}$, which is equivalent to
$$2h-1\le\Big(\mbox{$\frac{1+s_{d+1}^+}{1+s_d}$}\Big)^{h-1}
=\Big(\mbox{$1+\frac{s_{d+1}^+-s_d}{1+s_d}$}\Big)^{h-1}.$$
Now, as $d\ge 5$, we have
$$\mbox{$1+\frac{s_{d+1}^+-s_d}{1+s_d}$}
\ge 1+\mbox{$\frac{2s_{d,\textrm{even}}+t_{d+1}}{1+1/e}$} \ge
1+\mbox{$\frac{2(t_2+t_4)+t_6}{1+1/e}$}\ge
1+\mbox{$\frac{2(\frac{5}{12}+\frac{5}{432})+\frac{1}{6^6}}{1+1/e}$}
\ge 1+\mbox{$\frac{\frac{43}{50}}{1+1/e}$},$$
and the inequality
$\big(1+\mbox{$\frac{\frac{43}{50}}{1+1/e}$}\big)^{h-1}\ge 2h-1$
holds for $h\ge 6$, which proves the claim.

By the claim we deduce that
$$\Bigg|\frac{|S_{h,\bfs{\underline{a}}}^*|}{|\mathcal{F}_d^s|}-
s_d(1-s_d)^{h-1}\Bigg|\le t_{d+1}
\big((1+s_{d+1}^+)^{h-1}+\mbox{$\frac{1}{2}$}\big),$$
which implies the bound in the theorem for $d$ odd. The bound for
$d$ even follows by a similar argument.
\end{proof}
%
%----------------------------------------------------------------------------
%----------------------------------------------------------------------------
%
\subsubsection{Probability that $h$ specializations are required}
Now  we are able to complete the analysis of the probability that
$h$ specializations are required. For this purpose, we fix $h$ with
$2\le h\le r-s+1$, denote
$$\FF_h:=\{(\bfs a_1\klk \bfs
a_h)\in\fq^{r-s}\times\cdots\times\fq^{r-s}:\bfs a_i\not=\bfs
a_j\textrm{ for }i\not=j\}, \quad N_h:=|\FF_h|,$$
and introduce the random variable $C_h:=\FF_h\times
\mathcal{F}^s_d\rightarrow \{1,\ldots,h, \infty\}$ defined as
follows:
\begin{equation}\label{eq: definition C_h}
C_h((\bfs a_1,\ldots,\bfs a_h),\bfs F_s):=\left\{
\begin{array}{rl}
\min &\!\!\!\!\{j:Z(\bfs F_s(\bfs a_j,-))>0\}\mbox{ if }\exists j\mbox{ with }Z(\bfs F_s(\bfs a_j,-))>0; \\
%j,&\!\!\!\!\mbox{if $Z(\bfs F_s(\bfs a_i,-))=0$ for $1\le i<j$ and }Z(\bfs F_s(\bfs a_j,-))>0; \\
\infty,\!\!\!\! & \hbox{ otherwise.}
\end{array}
\right.
\end{equation}
We consider the set $\FF_h\times \mathcal{F}^s_d$ endowed with the
uniform probability $P_h:=P_{h,r,s,d}$ and study the probability of
the set $\{C_h=h\}$, that is, we aim to estimate the probability
$P_h(S_h^*)$, where
$$S_h^*:=\{((\bfs a_1,\ldots,\bfs a_h),\bfs F_s) \in \FF_h \times \mathcal{F}^s_d:
Z(\bfs F_s(\bfs a_i,-))=0\ (1\le i<h),\ Z(\bfs F_s(\bfs
a_h,-))>0\}.$$
We have the following result.
\begin{theorem}\label{th: probability of h specializations}
With assumptions as in Theorem \ref{th: estimate probability
S_{h,a}^*}, suppose further that $q^s>d$ and $1<h\le r-s+1$. We have
\begin{align*}
\big|P_h[C_h=h]-s_d(1-s_d)^{h-1}\big| &\le
t_{d+1}\big((1+s_{d+1}^+)^{h-1} +\mbox{$\frac{1}{2}$}\big)
+\mbox{$\frac{2}{q}$}\ \textrm{ for }d\textrm{ odd},\\[1ex]
\big|P_h[C_h=h]-s_{d+1}(1-s_{d+1})^{h-1}\big| &\le
t_{d+1}\big((1+s_{d+1}^+)^{h-1} +\mbox{$\frac{1}{2}$}\big)
+\mbox{$\frac{2}{q}$}\ \textrm{ for }d\textrm{ even}.
\end{align*}
\end{theorem}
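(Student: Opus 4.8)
The plan is to pass from the counting estimate of Theorem \ref{th: estimate probability S_{h,a}^*}, which is stated for a single ``good'' tuple $\bfs{\underline{a}}$ satisfying the hypothesis of Proposition \ref{prop: ranks generalized Vandermonde}, to a probability statement over the uniform distribution on $\FF_h\times\mathcal{F}^s_d$. First I would observe that the fiberwise decomposition gives
\begin{equation*}
P_h(S_h^*)=\frac{1}{N_h\,|\mathcal{F}_d^s|}\sum_{\bfs{\underline{a}}\in\FF_h}|S_{h,\bfs{\underline{a}}}^*|,
\end{equation*}
since $S_h^*$ is the disjoint union over $\bfs{\underline{a}}\in\FF_h$ of the slices $\{\bfs{\underline{a}}\}\times S_{h,\bfs{\underline{a}}}^*$, where $S_{h,\bfs{\underline{a}}}^*$ is exactly the set in \eqref{eq: def S_h^*}. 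The sum then splits into the contribution of the tuples $\bfs{\underline{a}}$ for which the matrix ${\sf M}$ of Proposition \ref{prop: ranks generalized Vandermonde} is invertible (the ``good'' tuples) and those for which it is not (the ``bad'' tuples).

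Next I would control each part separately. For the good tuples, Theorem \ref{th: estimate probability S_{h,a}^*} applies verbatim, giving $\big||S_{h,\bfs{\underline{a}}}^*|/|\mathcal{F}_d^s|-s_d(1-s_d)^{h-1}\big|\le t_{d+1}\big((1+s_{d+1}^+)^{h-1}+\frac12\big)$ (and the analogous bound with $s_{d+1}$ for $d$ even), uniformly in the good $\bfs{\underline{a}}$. Averaging this bound over the good tuples contributes the main term $s_d(1-s_d)^{h-1}$ plus the stated $t_{d+1}$-error, weighted by the fraction of good tuples, which is close to $1$. For the bad tuples, the crude bound $0\le |S_{h,\bfs{\underline{a}}}^*|/|\mathcal{F}_d^s|\le 1$ suffices, so their entire contribution is bounded by the fraction of bad tuples among all of $\FF_h$.

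The main obstacle will be bounding the proportion of bad tuples and folding it into the $\frac{2}{q}$ error term. Here I would use Remark \ref{rem: cardinality elements bad vandermonde rank}, which counts exactly the good tuples as $(q^{h-1}-1)(q^{h-2}-1)\cdots(q-1)\,q^{h(r-s-\frac{h-1}{2})}$, and compare this with $N_h=|\FF_h|=q^{r-s}(q^{r-s}-1)\cdots(q^{r-s}-h+1)$. Expanding the ratio shows that the fraction of bad tuples is $\mathcal{O}(1/q)$; a careful bookkeeping of the constants, combined with the fact that $|VS(\bfs F_s)|$ being replaced by $0$ or $1$ on the bad tuples costs at most this fraction, should yield a total discrepancy of at most $2/q$. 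I would then combine the good-tuple average (whose weight $1-\mathcal{O}(1/q)$ only perturbs the main term by $\mathcal{O}(1/q)$, absorbable into $2/q$) with the bad-tuple contribution of $\mathcal{O}(1/q)$ to obtain the claimed inequality; the $d$ even case follows \emph{mutatis mutandis}, replacing $s_d$ by $s_{d+1}$ throughout.
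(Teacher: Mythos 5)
Your proposal is correct and follows essentially the same route as the paper: the same fiberwise decomposition of $P_h[C_h=h]$ over $\FF_h$, the same split into tuples satisfying the hypothesis of Proposition \ref{prop: ranks generalized Vandermonde} and the rest, the trivial bound $|S_{h,\bfs{\underline{a}}}^*|/|\mathcal{F}_d^s|\le 1$ on the bad tuples, and Remark \ref{rem: cardinality elements bad vandermonde rank} to bound the bad-tuple fraction by $2/q$. The final bookkeeping you defer does close as claimed (the paper uses $s_d(1-s_d)^{h-1}\le\frac14$ to absorb the weight $1-\frac{2}{q}$ on the good-tuple average, though even the trivial bound by $1$ suffices), so there is no gap.
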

\begin{proof}
Assume that $d$ is odd. Denote by $\mathcal{G}_d\subset\FF_h$ the
set of $\bfs{\underline{a}}$ satisfying the hypothesis of
Proposition \ref{prop: ranks generalized Vandermonde}. We have
\begin{align*}
P_h[C_h=h]=\frac{|S_h^*|}{N_h|\mathcal{F}^s_d|}=
\frac{1}{N_h|\mathcal{F}^s_d|}
\Bigg(\sum_{\bfs{\underline{a}}\in\FF_h\setminus\mathcal{G}_d}|S_{h,\bfs{\underline{a}}}^*|+
\sum_{\bfs{\underline{a}}\in\mathcal{G}_d}|S_{h,\bfs{\underline{a}}}^*|\Bigg),
\end{align*}
where $S_{h,\bfs{\underline{a}}}^*$ is defined as in \eqref{eq: def
S_h^*}.

From Remark \ref{rem: cardinality elements bad vandermonde rank} we
see that
\begin{align*}
\frac{N_h-|\mathcal{G}_d|}{N_h}&=
1-\frac{q^{h(r-s)}}{N_h}\bigg(1-\frac{1}{q}\bigg)\cdots
\bigg(1-\frac{1}{q^{h-1}}\bigg)\le
\frac{1}{q}+\cdots+\frac{1}{q^{h-1}}\le \frac{2}{q},\\
\frac{|\mathcal{G}_d|}{N_h}&=
\frac{q^{h(r-s)}}{N_h}\bigg(1-\frac{1}{q}\bigg)\cdots
\bigg(1-\frac{1}{q^{h-1}}\bigg)\ge
1-\frac{1}{q}-\cdots-\frac{1}{q^{h-1}}\ge 1-\frac{2}{q}.
\end{align*}
%
%$|\FF_h\setminus\mathcal{G}_d|\le 2q^{h(r-s)-1}$.
It follows that
$$\frac{1}{N_h|\mathcal{F}^s_d|}
\sum_{\bfs{\underline{a}}\in\FF_h\setminus\mathcal{G}_d}|S_{h,\bfs{\underline{a}}}^*|
= \frac{1}{N_h}
\sum_{\bfs{\underline{a}}\in\FF_h\setminus\mathcal{G}_d}
\frac{|S_{h,\bfs{\underline{a}}}^*|}{|\mathcal{F}^s_d|} \le
\frac{N_h-|\mathcal{G}_d|}{N_h} \le\frac{2}{q}.$$

On the other hand, by Theorem \ref{th: estimate probability
S_{h,a}^*} we obtain
\begin{align*}
\frac{1}{N_h}& \sum_{\bfs{\underline{a}}\in\mathcal{G}_d}
\frac{|S_{h,\bfs{\underline{a}}}^*|}{|\mathcal{F}^s_d|}\le
s_d(1-s_d)^{h-1}+t_{d+1}\big((1+s_{d+1}^+)^{h-1}+\mbox{$\frac{1}{2}$}\big)
\\
\frac{1}{N_h}& \sum_{\bfs{\underline{a}}\in\mathcal{G}_d}
\frac{|S_{h,\bfs{\underline{a}}}^*|}{|\mathcal{F}^s_d|}\ge
\Big(s_d(1-s_d)^{h-1}-t_{d+1}\big((1+s_{d+1}^+)^{h-1}+\mbox{$\frac{1}{2}$}
\big)\Big)\big(1-\mbox{$\frac{2}{q}$}\big).
\end{align*}
Putting these estimates together, and taking into account that
$s_d(1-s_d)^{h-1}\le \frac{1}{4}$ for $h>1$, we readily deduce the
statement of the theorem. The case $d$ even follows similarly.
\end{proof}

Finally, we express the estimates of Theorem \ref{th: probability of
h specializations} asymptotically for large $q$.
\begin{corollary}
\label{coro: probability of h specializations - asymptotic} With
notations and hypotheses as in Theorem \ref{th: probability of h
specializations}, assume further that $q^s
> 6$. We have
\begin{align*}
\big|P_h[C_h=h]-\mu_d(1-\mu_d)^{h-1}\big| &\le
\mbox{$\frac{1}{(d+1)!}$}\big(e^{h-1} +\mbox{$\frac{1}{2}$}\big)
+\mbox{$\frac{2}{q}$}+ \mbox{$\frac{5}{q^s}$}(2-\mu_d)^{h-1}
\ \textrm{ for }d\textrm{ odd},\\[1ex]
\big|P_h[C_h=h]-\mu_{d+1}(1-\mu_{d+1})^{h-1}\big| &\le
\mbox{$\frac{1}{(d+1)!}$}\big(e^{h-1} +\mbox{$\frac{1}{2}$}\big)
+\mbox{$\frac{2}{q}$} + \mbox{$\frac{5}{q^s}$}(2-\mu_d)^{h-1}\
\textrm{ for }d\textrm{ even}.
\end{align*}
\end{corollary}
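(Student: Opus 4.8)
The plan is to start from Theorem \ref{th: probability of h specializations}, which already controls $P_h[C_h=h]$ up to the combinatorial quantity $s_d(1-s_d)^{h-1}$ (for $d$ odd) with an error $t_{d+1}\big((1+s_{d+1}^+)^{h-1}+\frac12\big)+\frac2q$, and to replace the discrete quantities $s_d$, $s_{d+1}^+$, $t_{d+1}$ by their asymptotic counterparts. Concretely, I would establish three elementary estimates and then combine them through the triangle inequality, writing $\big|P_h[C_h=h]-\mu_d(1-\mu_d)^{h-1}\big|\le \big|P_h[C_h=h]-s_d(1-s_d)^{h-1}\big|+\big|s_d(1-s_d)^{h-1}-\mu_d(1-\mu_d)^{h-1}\big|$.

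First I would bound the error term of Theorem \ref{th: probability of h specializations}. Since $\binom{q^s}{i}=\frac{q^s(q^s-1)\cdots(q^s-i+1)}{i!}\le \frac{q^{si}}{i!}$, one gets $\binom{q^s}{i}q^{-si}\le \frac1{i!}$ whenever $q^s\ge i$; in particular $t_{d+1}=\binom{q^s}{d+1}q^{-s(d+1)}\le \frac1{(d+1)!}$ and $s_{d+1}^+=\sum_{i=1}^{d+1}\binom{q^s}{i}q^{-si}\le \sum_{i=1}^{d+1}\frac1{i!}\le e-1$, whence $(1+s_{d+1}^+)^{h-1}\le e^{h-1}$. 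These two inequalities immediately give $t_{d+1}\big((1+s_{d+1}^+)^{h-1}+\frac12\big)\le \frac1{(d+1)!}\big(e^{h-1}+\frac12\big)$, which is the first summand of the claimed bound, while the $\frac2q$ term is carried over unchanged.

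Next I would compare $s_d(1-s_d)^{h-1}$ with $\mu_d(1-\mu_d)^{h-1}$. The asymptotic estimate \eqref{eq: s_m asymptotic} gives $|s_d-\mu_d|\le \frac1{4q^s}+\frac{d}{q^{2s}}$, and since $q^s>d$ the second term is below $\frac1{q^s}$, so $|s_d-\mu_d|\le \frac{5}{4q^s}$. Applying the mean value theorem to $g(x):=x(1-x)^{h-1}$, whose derivative $g'(x)=(1-x)^{h-2}(1-hx)$ satisfies $|g'(x)|\le h-1$ on $[0,1]$, and using that $q^s>6$ forces $s_d,\mu_d\in(0,1)$, I obtain $\big|s_d(1-s_d)^{h-1}-\mu_d(1-\mu_d)^{h-1}\big|\le (h-1)\,|s_d-\mu_d|\le \frac{5(h-1)}{4q^s}$. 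It then remains to absorb the factor $h-1$ into an exponential: since $\mu_d\le \frac23$ for $d\ge2$, we have $2-\mu_d\ge \frac43$, and the elementary inequality $\frac{h-1}{4}\le \big(\frac43\big)^{h-1}\le (2-\mu_d)^{h-1}$ holds for all $h\ge2$, being immediate for small $h$ and then persisting because the right-hand side grows geometrically while the left-hand side grows linearly. This yields the third summand $\frac{5}{q^s}(2-\mu_d)^{h-1}$.

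Combining these bounds with Theorem \ref{th: probability of h specializations} proves the statement for $d$ odd. For $d$ even one argues identically, replacing $s_d,\mu_d$ by $s_{d+1},\mu_{d+1}$ and using $|s_{d+1}-\mu_{d+1}|\le \frac1{4q^s}+\frac{d+1}{q^{2s}}\le\frac5{4q^s}$ (valid since $q^s\ge d+1$); one keeps the error in the larger quantity $(2-\mu_d)^{h-1}$, which dominates $(2-\mu_{d+1})^{h-1}$ because $\mu_{d+1}\ge\mu_d$ when $d$ is even. I expect the third step to be the main obstacle: one must both produce a clean linear-in-$h$ Lipschitz bound for $x\mapsto x(1-x)^{h-1}$ on the relevant interval (which is where the hypothesis $q^s>6$, guaranteeing $s_d,\mu_d\in(0,1)$, enters) and verify the geometric-versus-linear inequality $\frac{h-1}{4}\le(2-\mu_d)^{h-1}$, whereas the estimates on $t_{d+1}$ and $s_{d+1}^+$ are entirely routine.
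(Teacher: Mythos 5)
Your proof is correct, and its overall skeleton coincides with the paper's: invoke Theorem \ref{th: probability of h specializations}, bound the error there via $t_{d+1}\le \frac{1}{(d+1)!}$ and $1+s_{d+1}^+\le e$ (identical to the paper's first step), and then compare $s_d(1-s_d)^{h-1}$ with $\mu_d(1-\mu_d)^{h-1}$ using $|s_d-\mu_d|=\mathcal{O}(q^{-s})$ from \eqref{eq: s_m asymptotic}. Where you genuinely diverge is in that last comparison. The paper substitutes $s_d\le \mu_d+\frac{2}{q^s}$ (resp.\ $s_d\ge \mu_d-\frac{2}{q^s}$) into $x(1-x)^{h-1}$, expands $(1-\mu_d\pm\frac{2}{q^s})^{h-1}$ by the binomial theorem, bounds the tail by $\frac{2}{q^s}(2-\mu_d)^{h-1}$, and treats the upper and lower bounds separately; the hypothesis $q^s>6$ enters there to keep $\mu_d-\frac{2}{q^s}$ and $1-\mu_d-\frac{2}{q^s}$ positive so that the monotone substitution is legitimate. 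You instead apply the mean value theorem to $g(x)=x(1-x)^{h-1}$, whose derivative $g'(x)=(1-x)^{h-2}(1-hx)$ indeed satisfies $|g'|\le h-1$ on $[0,1]$, and then absorb the linear factor via $\frac{h-1}{4}\le (4/3)^{h-1}\le (2-\mu_d)^{h-1}$, which holds for all $h\ge 2$ since $\mu_d\le\frac{2}{3}$ and $4(4/3)^k-k$ is increasing and positive for $k\ge 1$. This buys you a single two-sided estimate in place of two one-sided ones and a slightly sharper $|s_d-\mu_d|\le\frac{5}{4q^s}$, at the cost of two small auxiliary checks: that $s_d\in(0,1)$ (true for all $d\ge 2$ because the terms $t_j$ decrease, so this does not actually require $q^s>6$) and the linear-versus-geometric inequality above. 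Both check out, the $d$ even case goes through with $\mu_{d+1}\ge\mu_d$ as you say, and the resulting bound is exactly the stated one.
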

\begin{proof}
Since $\binom{q^s}{j}q^{-sj}\le \mbox{$\frac{1}{j!}$}$ for every
$j\geq 0$, we have $s^+_{d+1}\le e-1$ and $t_{d+1}\le
\mbox{$\frac{1}{(d+1)!}$}$. It follows that
\begin{equation}\label{eq:1}
t_{d+1}\big((1+s_{d+1}^+)^{h-1} +\mbox{$\frac{1}{2}$}\big)
+\mbox{$\frac{2}{q}$}\le \mbox{$\frac{1}{(d+1)!}$}\big(e^{h-1}
+\mbox{$\frac{1}{2}$}\big) +\mbox{$\frac{2}{q}$}.
\end{equation}
Now assume that $d$ is odd. According to \eqref{eq: s_m asymptotic},
$$
|s_d-\mu_d|\le \frac{1}{4q^s}+\frac{d}{q^{2s}}<\frac{2}{q^s}.
$$
As a consequence,
\begin{align*}
s_d(1-s_d)^{h-1}&<
\big(\mu_d+\mbox{$\frac{2}{q^s}$}\big)\big(1-\mu_d+\mbox{$\frac{2}{q^s}$}\big)^{h-1}
\\
&=\big(\mu_d+\mbox{$\frac{2}{q^s}$}\big)\bigg(
(1-\mu_d)^{h-1}+\sum_{j=0}^{h-2}
\mbox{$\binom{h-1}{j}$}(1-\mu_d)^j\big(\mbox{$\frac{2}{q^s}$}\big)^{h-1-j}\bigg).
\end{align*}
As  $\mbox{$\frac{2}{q^s}$}<1$, we have
$$
\sum_{j=0}^{h-2}\mbox{$\binom{h-1}{j}$}(1-\mu_d)^j\big(\mbox{$\frac{2}{q^s}$}\big)^{h-1-j}
\le
\mbox{$\frac{2}{q^s}$}\sum_{j=0}^{h-2}\mbox{$\binom{h-1}{j}$}(1-\mu_d)^j
\le \mbox{$\frac{2}{q^s}$}(2-\mu_d)^{h-1}.
$$
Thus,
\begin{align*}%\label{eq:2}
s_d(1-s_d)^{h-1}  \le \big(\mu_d+\mbox{$\frac{2}{q^s}$}\big)\big((1-\mu_d)^{h-1}+\mbox{$\frac{2}{q^s}$}(2-\mu_d)^{h-1}\big)
 \le \mu_d(1-\mu_d)^{h-1} + \mbox{$\frac{5(2-\mu_d)^{h-1}}{q^s}$}.
\end{align*}
Combining \eqref{eq:1} with this inequality and Theorem \ref{th:
probability of h specializations} we obtain
$$
P_h[C_h=h]\le \mu_d(1-\mu_d)^{h-1} +
\mbox{$\frac{5(2-\mu_d)^{h-1}}{q^s}$} +
\mbox{$\frac{1}{(d+1)!}$}\big(e^{h-1} +\mbox{$\frac{1}{2}$}\big)
+\mbox{$\frac{2}{q}$}.
$$

To prove the lower bound for $P_h[C_h=h]$ we observe that, as $d>1$
is odd, then $d\geq3$. Since
$\mu_d=\sum_{j=1}^{\infty}\frac{(-1)^{j-1}}{j!}$, it is easy to see
that $ \frac{1}{2}< \mu_d < \frac{2}{3}$, and the hypothesis $q^s >
6$ implies $\mu_d-\frac{2}{q^s}>0$ and $1-\mu_d-\frac{2}{q^s}>0$. We
conclude that
$$
s_d(1-s_d)^{h-1}\geq\bigg(\mu_d-\frac{2}{q^s}\bigg)
\bigg(1-\mu_d-\frac{2}{q^s}\bigg)^{h-1}.
$$
Arguing as above we show that
$$
\bigg(\mu_d-\frac{2}{q^s}\bigg)\bigg(1-\mu_d-\frac{2}{q^s}\bigg)^{h-1}
\geq \mu_d(1-\mu_d)^{h-1}-\frac{4}{q^s}(2-\mu_d)^{h-1},
$$
which proves the lower bound. A similar analysis works for $d$ even.
\end{proof}
%
%----------------------------------------------------------------------------
%----------------------------------------------------------------------------
%
\subsubsection{Probability that more specializations are required}
\label{subsubsec: prob more specializ are required}
Theorems \ref{th: probability 1 specialization} and \ref{th:
probability of h specializations} provide estimates on the
probability that $h$ specializations are required for $1\le h\le
r-s+1$. These estimates show that such a probability decreases
exponentially with $h$. For more specializations this exponential
behavior might be altered, as further conditions imposed on the
elements of $\mathcal{F}_d^s$ might not be linearly independent. For
this reason, in this section we estimate the probability that more
than $r-s+1$ specializations are required.

Similarly to previously, for any $h\ge 3$ we denote
\begin{align*}
\FF_h:=\{(\bfs a_1\klk \bfs
a_h)\in\fq^{r-s}\times\cdots\times\fq^{r-s}:\bfs a_i\not=\bfs
a_j\textrm{ for }i\not=j\}, \quad N_h:=|\FF_h|,
\end{align*}
and consider the random variable $C_h:=C_{h,r,d}: \FF_h\times
\mathcal{F}_{d}^s\to\{1\klk h,\infty\}$ defined for $\underline{\bfs
a}:=(\bfs a_1\klk\bfs a_h)\in\FF_h$ and $\bfs F_s\in\mathcal{F}_d^s$
in the following way:
$$C_h(\underline{\bfs
a},\bfs F_s):=\left\{\begin{array}{l} \min\{j: Z(\bfs F_s(\bfs
a_j,-))>0\}\ \textrm{ if }\exists j\textrm{ with }Z(\bfs F_s(\bfs
a_j,-))>0,\\[0.25ex]
\qquad\infty \qquad\textrm{otherwise}.\\
\end{array}\right.$$
We consider the set $\FF_h\times\mathcal{F}_{d}^s$ as before endowed
with the uniform probability $P_h:=P_{h,r,d}$ and analyze the
probability $P_h[C_h=h]$. Up to now, all the probability estimates
concerned the probability spaces $\FF_h\times \mathcal{F}_d^s$
separately. To link the probabilities $P_h$ for $1\le h\le q^{r-s}$,
we have the following result.
\begin{lemma}\label{lemma: consistency conditions}
Let $h>1$ and let $\pi_h:\FF_h\times\mathcal{F}_{d}^s
\to\FF_{h-1}\times\mathcal{F}_{d}^s$ be the mapping induced by the
projection $\FF_h\to\FF_{h-1}$ on the first $h-1$ coordinates. If
$\mathcal{S}\subset\FF_{h-1}\times\mathcal{F}_{d}^s$, then
$P_h[\pi_h^{-1}(\mathcal{S})]=P_{h-1}[\mathcal{S}].$
\end{lemma}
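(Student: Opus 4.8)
The plan is to reduce the claimed identity to an elementary fiber-counting argument, exploiting that both $P_h$ and $P_{h-1}$ are uniform distributions. Writing $Q:=q^{r-s}$ for the cardinality of $\fq^{r-s}$, one has $N_h=Q(Q-1)\cdots(Q-h+1)$, and in particular the recursion $N_h=(Q-h+1)\,N_{h-1}$. Since $P_h[\mathcal{T}]=|\mathcal{T}|/(N_h\,|\mathcal{F}_d^s|)$ for every $\mathcal{T}\subset\FF_h\times\mathcal{F}_d^s$, and similarly $P_{h-1}[\mathcal{S}]=|\mathcal{S}|/(N_{h-1}\,|\mathcal{F}_d^s|)$, I would first observe that the statement is equivalent to the purely combinatorial identity $|\pi_h^{-1}(\mathcal{S})|=(Q-h+1)\,|\mathcal{S}|$.

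The key step is to compute the cardinality of an arbitrary fiber of $\pi_h$. Fixing a base point $((\bfs a_1,\ldots,\bfs a_{h-1}),\bfs F_s)\in\FF_{h-1}\times\mathcal{F}_d^s$, its preimage under $\pi_h$ consists precisely of the pairs $((\bfs a_1,\ldots,\bfs a_{h-1},\bfs a_h),\bfs F_s)$ with $(\bfs a_1,\ldots,\bfs a_h)\in\FF_h$; here $\bfs F_s$ is unchanged because $\pi_h$ acts as the identity on the second factor. The constraint $(\bfs a_1,\ldots,\bfs a_h)\in\FF_h$ means exactly that $\bfs a_h\in\fq^{r-s}$ differs from each of $\bfs a_1,\ldots,\bfs a_{h-1}$, and since these $h-1$ points are already pairwise distinct, there are exactly $Q-(h-1)$ admissible values of $\bfs a_h$. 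Crucially, this count does not depend on the chosen base point, so every fiber has the same cardinality $Q-h+1$.

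Summing this constant fiber size over all base points lying in $\mathcal{S}$ would then give $|\pi_h^{-1}(\mathcal{S})|=(Q-h+1)\,|\mathcal{S}|$, and substituting this together with $N_h=(Q-h+1)\,N_{h-1}$ would yield
$$
P_h[\pi_h^{-1}(\mathcal{S})]=\frac{(Q-h+1)\,|\mathcal{S}|}{N_h\,|\mathcal{F}_d^s|}
=\frac{(Q-h+1)\,|\mathcal{S}|}{(Q-h+1)\,N_{h-1}\,|\mathcal{F}_d^s|}
=\frac{|\mathcal{S}|}{N_{h-1}\,|\mathcal{F}_d^s|}=P_{h-1}[\mathcal{S}].
$$
I expect no serious obstacle in this argument; the only point deserving a moment of care is the uniformity of the fiber size, which hinges on the fact that a tuple in $\FF_{h-1}$ already has pairwise distinct entries, so the number of ways to extend it to a tuple in $\FF_h$ is independent of the particular tuple chosen.
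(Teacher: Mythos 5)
Your argument is correct and is essentially the same as the paper's: the paper also decomposes $\pi_h^{-1}(\mathcal{S})$ fiberwise, observes that each fiber has exactly $q^{r-s}-h+1$ elements, and cancels this factor against $N_h=(q^{r-s}-h+1)N_{h-1}$. No substantive difference.
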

\begin{proof}
Note that
\begin{align*}
\pi_h^{-1}(\mathcal{S})&= \bigcup_{\bfs F_s\in\mathcal{F}_d^s}
\{(\bfs a_1\klk \bfs a_h)\in\FF_h:(\bfs
a_1\klk \bfs a_{h-1},\bfs F_s)\in\mathcal{S}\}\times\{\bfs F_s\}\\
&=\bigcup_{\bfs F_s\in\mathcal{F}_d^s}
\bigcup_{\stackrel{\scriptstyle(\bfs a_1\klk\bfs
a_{h-1})\in\FF_{h-1}:}{(\bfs a_1\klk\bfs a_{h-1},\bfs
F_s)\in\mathcal{S}}}\{(\bfs a_1\klk\bfs a_{h-1})\}\times
(\fq^{r-s}\setminus\{\bfs a_1\klk\bfs a_{h-1}\})\times \{\bfs F_s\}.
\end{align*}
It follows that
\begin{align*}
P_h[\pi_h^{-1}(\mathcal{S})]&= \frac{1}{N_h|\mathcal{F}_d^s|}
\sum_{\bfs F_s\in\mathcal{F}_d^s} \sum_{\underline{\bfs
a}\in\FF_{h-1}:(\underline{\bfs a},\bfs
F_s)\in\mathcal{S}}(q^{r-s}-h+1)\\&=
\frac{1}{N_{h-1}|\mathcal{F}_d^s|} \sum_{\bfs
F_s\in\mathcal{F}_d^s}\big|\{\underline{\bfs
a}\in\FF_{h-1}:(\underline{\bfs a},\bfs F_s)\in\mathcal{S}\}\big|=
P_{h-1}[\mathcal{S}].
\end{align*}
This proves the lemma.
\end{proof}

According to the Kolmogorov extension theorem (see, e.g.,
\cite[Chapter IV, Section 5, Extension Theorem]{Feller71}), the
conditions of ``consistency'' of Lemma \ref{lemma: consistency
conditions} imply that the probabilities $P_h$ ($1\le h\le q^{r-s}$)
can be put in a unified framework. More precisely, we define
$\FF:=\FF_{\!q^{r-s}}$ and $P:=P_{q^{r-s}}$. Then the probability
measure $P$ defined on $\FF$ allows us to interpret consistently all
the results of this paper. In the same vein, the variables $C_h$
($1\le h\le q^{r-s}$) can be naturally extended to a random variable
$C: \FF\times\mathcal{F}_d^s \to\N\cup\{\infty\}$. Consequently, in
what follows we shall drop the subscript $h$ from the notations
$P_h$ and $C_h$.

Now we are able to state and prove our result on the probability
that more than $r-s+1$ random choices are required.
\begin{corollary}\label{coro: prob C>h^*}
With assumptions as in Theorem \ref{th: estimate probability
S_{h,a}^*}, suppose further that $q^s>d$ and denote $h^*:=r-s+1$. We
have
\begin{align*}
\big| P[C>h^*]-(1-\mu_d)^{h^*}\big| &\le
\mbox{$\frac{e^{h^*}}{(d+1)!}$} +\mbox{$\frac{2h^*}{q}$}
+\mbox{$\frac{15(2-\mu_d)^{h^*}}{q^s}$}\textrm{ for }d\textrm{
odd},\\[1ex]
\big| P[C>h^*]-(1-\mu_{d+1})^{h^*}\big| &\le
\mbox{$\frac{e^{h^*}}{(d+1)!}$} +\mbox{$\frac{2h^*}{q}$}
+\mbox{$\frac{15(2-\mu_d)^{h^*}}{q^s}$}\textrm{ for }d\textrm{
even}.
\end{align*}
\end{corollary}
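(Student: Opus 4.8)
The plan is to exploit the partition of the probability space furnished by the unified random variable $C$. Since the events $\{C=h\}$ for $h\ge 1$, together with $\{C=\infty\}$, are pairwise disjoint and exhaust $\FF\times\mathcal{F}_d^s$, one has
$$P[C>h^*]=1-\sum_{h=1}^{h^*}P[C=h].$$
The first thing I would verify is that each summand coincides with a probability already estimated: because the event $\{C=h\}$ depends only on the first $h$ vectors $\bfs a_1,\ldots,\bfs a_h$, iterating the consistency relation of Lemma \ref{lemma: consistency conditions} (equivalently, invoking the Kolmogorov extension recalled before the statement) gives $P[C=h]=P_h[C_h=h]$ for every $1\le h\le h^*$. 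Thus Corollary \ref{coro: probability 1 specialization - asymptotic} governs the term $h=1$ and Corollary \ref{coro: probability of h specializations - asymptotic} governs the terms $2\le h\le h^*$.

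Assume $d$ odd. Writing $P[C=h]=\mu_d(1-\mu_d)^{h-1}+\varepsilon_h$, with $\varepsilon_h$ the error supplied by those two corollaries, the main term telescopes as a geometric sum,
$$\sum_{h=1}^{h^*}\mu_d(1-\mu_d)^{h-1}=\mu_d\,\frac{1-(1-\mu_d)^{h^*}}{\mu_d}=1-(1-\mu_d)^{h^*},$$
so that $P[C>h^*]=(1-\mu_d)^{h^*}-\sum_{h=1}^{h^*}\varepsilon_h$ and $\big|P[C>h^*]-(1-\mu_d)^{h^*}\big|\le\sum_{h=1}^{h^*}|\varepsilon_h|$. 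For the $h=1$ term I would first convert the bound of Corollary \ref{coro: probability 1 specialization - asymptotic}, stated against $\mu_{d+1}$ and $\mu_d$, into a bound against $\mu_d$, at the cost of $\mu_d-\mu_{d+1}=\tfrac{1}{(d+1)!}$.

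It then remains to sum the three families of error contributions. The factorial errors $\tfrac{1}{(d+1)!}\big(e^{h-1}+\tfrac12\big)$, together with the $\tfrac{1}{(d+1)!}$ coming from $h=1$, are controlled through $\sum_{h=2}^{h^*}e^{h-1}=\tfrac{e^{h^*}-e}{e-1}$; since $e-1>1$ dominates the linear overhead $\tfrac{h^*-1}{2}$ for all $h^*\ge 2$, the whole contribution is absorbed into $\tfrac{e^{h^*}}{(d+1)!}$. The $\tfrac{2}{q}$ terms occur only for $2\le h\le h^*$ and add up to at most $\tfrac{2(h^*-1)}{q}\le\tfrac{2h^*}{q}$. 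Finally the terms $\tfrac{5}{q^s}(2-\mu_d)^{h-1}$, plus the $\tfrac{2}{q^s}$ from $h=1$, are summed via $\sum_{h=2}^{h^*}(2-\mu_d)^{h-1}=\tfrac{(2-\mu_d)^{h^*}-(2-\mu_d)}{1-\mu_d}$, using that for odd $d\ge 3$ one has $\tfrac12<\mu_d\le\mu_3=\tfrac23$, whence $1-\mu_d\ge\tfrac13$ and $\tfrac{5}{1-\mu_d}\le 15$. Collecting the three bounds yields the stated estimate; the case $d$ even is identical, with $\mu_d$ replaced by $\mu_{d+1}$ in the main term while the error $(2-\mu_d)^{h^*}$ is kept verbatim.

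The step I expect to be most delicate is the bookkeeping of constants in this last family. One must notice that the subtracted lowest-order term of the geometric series, namely $-\tfrac{5(2-\mu_d)}{(1-\mu_d)q^s}$, is at least $\tfrac{15}{q^s}$ in absolute value and therefore more than compensates the leftover $\tfrac{2}{q^s}$ from the $h=1$ contribution; it is precisely this cancellation, combined with $\tfrac{5}{1-\mu_d}\le 15$, that collapses the family into $\tfrac{15(2-\mu_d)^{h^*}}{q^s}$ uniformly in $h^*$, rather than degrading the constant. By comparison the factorial and $\tfrac1q$ families are routine, since exponential growth dominates the polynomial overheads.
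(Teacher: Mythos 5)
Your proposal is correct and follows essentially the same route as the paper: write $P[C>h^*]=1-\sum_{h=1}^{h^*}P[C=h]$, telescope the main terms into $(1-\mu_d)^{h^*}$, apply the triangle inequality, and bound the three error families from Corollaries \ref{coro: probability 1 specialization - asymptotic} and \ref{coro: probability of h specializations - asymptotic} exactly as the paper does (your constant bookkeeping, including $\tfrac{5}{1-\mu_d}\le 15$, checks out). The only cosmetic difference is that the paper absorbs the $h=1$ contribution $\tfrac{2}{q^s}$ as the first term of the geometric series $\tfrac{5}{q^s}\sum_{h=1}^{h^*}(2-\mu_d)^{h-1}$ rather than via the cancellation you describe.
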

\begin{proof}
Suppose that $d$ is odd. Observe that
$P[C>h^*]=1-\sum_{h=1}^{h^*}P[C=h]$ and
%
%\begin{equation}\label{eq: equality sum powers 1-s_d}
$$(1-\mu_d)^{h^*}=1-\sum_{h=1}^{h^*}\mu_d(1-\mu_d)^{h-1}.$$
%\end{equation}
%
According to Corollaries \ref{coro: probability 1 specialization -
asymptotic} and \ref{coro: probability of h specializations -
asymptotic},
\begin{align*}
\big|P[C>h^*]-(1-\mu_d)^{h^*}\big|&\le \sum_{h=1}^{h^*}\big|P[C=h]-\mu_d(1-\mu_d)^{h-1}|\\
&\le \mbox{$\frac{1}{(d+1)!}$}\bigg(\mbox{$\frac{h^*-1}{2}$}
+\sum_{h=1}^{h^*}e^{h-1}\bigg)+\mbox{$\frac{2(h^*-1)}{q}$}+\mbox{$\frac{5}{q^s}$} \sum_{h=1}^{h^*}(2-\mu_d)^{h-1}\\
&\le \mbox{$\frac{e^{h^*}}{(d+1)!}$}+\mbox{$\frac{2h^*}{q}$}
+\mbox{$\frac{15(2-\mu_d)^{h^*}}{q^s}$}.
\end{align*}
This proves the corollary.
\end{proof}
%
%----------------------------------------------------------------------------
%----------------------------------------------------------------------------
%----------------------------------------------------------------------------
%----------------------------------------------------------------------------
%----------------------------------------------------------------------------
%----------------------------------------------------------------------------
%----------------------------------------------------------------------------
%----------------------------------------------------------------------------
%
\section{Average-case complexity and probability of success}
\label{section: analysis of SVS algorithm}
Now we are finally able to provide estimates on the average-case
complexity and the probability of success of Algorithm \ref{algo:
basic scheme}. For $1<s<r$ and $d \geq 2$, we recall that this
algorithms takes as input an element of $\mathcal{F}^s_d$, namely an
$s$-tuple $\bfs F_s:=(F_1, \dots, F_s)$ of elements of
$\mathcal{F}_{d}:=\{F\in \fq[X_1, \dots, X_r]:\deg F \le d\}$, and
outputs an $\fq$-rational solution $\bfs x\in\fq^r$ of the system
$\bfs F_s=\bfs 0$, or ``{\tt Failure}''.

For this purpose, %given an input $\bfs F_s\in\mathcal{F}_d^s$,
Algorithm \ref{algo: basic scheme} successively generates a sequence
$\underline{\bfs a}:=(\bfs a_1,\bfs a_2,\ldots,\bfs a_{h^*})\in
\FF_{h^*}$, where $h^*:=r-s+1$, and searches for $\fq$--rational
zeros of $\bfs F_s$ in the vertical strips $\{\bfs a_i\}\times\fq^s$
for $1\le i\le h^*$, until a zero of $\bfs F_s(\bfs a_i,-)$ is found
or all the vertical strips are exhausted. As discussed in Section
\ref{section: intro}, given $\underline{\bfs a}\in \FF_{_{h^*}}$,
the whole procedure requires at most $C_{\underline{\bfs a}}(\bfs
F_s)\cdot \tau(d,s,q)$ arithmetic operations in $\fq$, where
$\tau(d,s,q)$ is the maximum number of arithmetic operations in
$\fq$ necessary to perform a search in an arbitrary vertical strip,
provided that $i^*:=C(\underline{\bfs a},\bfs F_s)\le h^*$, $\bfs
F_s(\bfs a_{i^*},-)$ satisfies hypothesis ({\sf H}) and the choice
of parameters $\omega$ for the probabilistic algorithm performing
the zero-dimensional search on the specialized system $\bfs F_s(\bfs
a_{i^*},-)=\bfs 0$ is accurate. The analysis of the probability that
all these conditions hold is essential to determine the average-case
complexity and probability of success of Algorithm \ref{algo: basic
scheme}, as we shall see in the next sections.
%
%----------------------------------------------------------------------------
%----------------------------------------------------------------------------
%----------------------------------------------------------------------------
%----------------------------------------------------------------------------
%
\subsection{Average-case complexity}
To perform a search in a vertical strip $\{\bfs a\}\times \fq^s$ we
shall use Gr\"obner-basis technology or a Kronecker-like algorithm.
In the first case, we shall use a deterministic algorithm, combined
with a probabilistic routine for computing $\fq$-rational roots of
univariate polynomials. In the second case, we shall use a
probabilistic algorithm. Therefore, in both cases we rely on $r_d$
random choices of elements of $\fq$ for certain $r_d\in\N$. We
denote by $\Omega_d:=\fq^{r_d}$ the set of all such random choices
and consider $\Omega_d$ endowed with the uniform probability,
$\FF_{h^*}\times\mathcal{F}_d^s$ with the (uniform) probability $P$
of Section \ref{subsubsec: prob more specializ are required}, and
$\FF_{h^*}\times\mathcal{F}_d^s\times \Omega_d$ with the product
probability. Therefore, we may represent the cost of Algorithm
\ref{algo: basic scheme} either using Gr\"obner bases or
Kronecker-like algorithms by the random variable
\begin{align*}
X:= X_{s,r,d} : \FF_{h^*}\times\mathcal{F}_d^s\times
\Omega_d\rightarrow \N
\end{align*}
which counts the number $X(\underline{\bfs a},\bfs F_s,\omega)$ of
arithmetic operations performed on input $\bfs
F_s\in\mathcal{F}_d^s$, with the choice of vertical strips defined
by $\underline{\bfs a}$ and the choice $\omega$ for the parameters
of the routine for zero-dimensional solving. We shall use versions
of these routines having a probability of failure of order
$\mathcal{O}(q^{-1})$. This can be achieved by a modification of the
algorithm for computing $\fq$-rational roots of, e.g.,
\cite[Algorithm 14.15]{GaGe99}, and the Kronecker-like algorithm in
\cite{CaMa06a}, which consists of performing the random choices on a
set $\Omega_d$ suitably augmented, with an increase of the number of
arithmetic operations in $\fq$ of the corresponding algorithms by a
factor $\mathcal{O}(\log q)$.

We also remark that the search for $\fq$-rational solutions in an
arbitrary vertical strip $\{\bfs a\}\times\fq^s$ will be truncated
when $\tau(s,d,q)$ arithmetic operations in $\fq$ are performed, so
that at most $\tau(s,d,q)$ arithmetic operations in $\fq$ are
performed during all the searches of Algorithm \ref{algo: basic
scheme}. For Gr\"obner-basis technology, this can be done by
establishing bounds on the degrees of the polynomials arising in the
underlying Macaulay matrix, while for Kronecker algorithms it can be
done by controlling the degrees in the intermediate varieties
arising during the execution of the algorithm.

Having made the algorithmic model precise, we proceed to bound from
above the asymptotic behavior of the expected value $E[X]$ of $X$,
namely
$$E[X]:=\frac{1}{|\FF_{h^*}||\mathcal{F}_d^s||\Omega_d|}
\sum_{(\underline{\bfs a},\bfs F_s,\omega)}X(\underline{\bfs a},\bfs
F_s,\omega).$$
\begin{theorem}\label{th: average-case compl}
Let $h^*:={r-s+1}$. For $q>2d^s(d+1)^s$ and $d>s$, the average--case
complexity of Algorithm \ref{algo: basic scheme} is bounded in the
following way:
\begin{align*}
E[X] &\leq {\tau(d,s,q)} \Big(\mu_d^{-1}+h^*(1-\mu_d)^{h^*}\!\!+
\mbox{$\frac{3{h^*}e^{h^*}}{(d+1)!}$}+\mathcal{O}\big(\mbox{$\frac{h^*
d^s(d+1)^s}{q}$}+\mbox{$\frac{h^*
(2-\mu_d)^{h^*}\!\!}{q^s}$}\big)\Big)\textrm{ for }d\textrm{ odd},\\[1ex]
E[X] &\leq {\tau(d,s,q)}
\Big(\mu_{d+1}^{-1}+h^*(1-\!\mu_{d+1})^{h^*}\!\!+
\mbox{$\frac{3{h^*}e^{h^*}}{(d+1)!}$}+\mathcal{O}\big(\mbox{$\frac{h^*
d^s(d+1)^s}{q}$}+\mbox{$\frac{h^*
(2-\mu_d)^{h^*}\!\!}{q^s}$}\big)\Big)\textrm{ for }d\textrm{ even},
\end{align*}
where $\tau(d,s,q)$ is the cost of the search in a vertical strip
and the constant underlying the $\mathcal{O}$-notation is
independent of $r$, $s$, $d$ and $q$.
\end{theorem}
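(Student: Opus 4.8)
The plan is to bound $E[X]$ by the expected number of vertical strips actually searched, times the per--strip cost $\tau(d,s,q)$, and then to feed in the probabilistic estimates of Sections~\ref{section: probability reg complete inters} and \ref{section: number of specializ for having fq-solutions}. Write $C:=C(\underline{\bfs a},\bfs F_s)$ for the index of the first strip carrying an $\fq$-rational solution, and isolate the ``good'' event $\mathcal G$ singled out before the statement, namely that $C\le h^*$, that $\bfs F_s(\bfs a_C,-)$ satisfies $({\sf H})$, and that the choice $\omega$ of solver parameters is accurate. On $\mathcal G$ the zero-dimensional solver succeeds at the $C$-th strip while reporting no spurious solution before it, so the algorithm performs exactly $C$ searches and costs at most $C\,\tau(d,s,q)\le\min(C,h^*)\,\tau(d,s,q)$; on the complement $\mathcal G^c$ every search is truncated at $\tau(d,s,q)$ operations, so the cost is at most $h^*\tau(d,s,q)$. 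This yields
$$
E[X]\le\tau(d,s,q)\big(E[\min(C,h^*)]+h^*\,P[\mathcal G^c]\big),
$$
and the whole proof reduces to estimating the two summands.

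For the first summand I would write $E[\min(C,h^*)]=\sum_{h=1}^{h^*}P[C\ge h]=\sum_{m=0}^{h^*-1}P[C>m]$ and invoke (the method of) Corollary~\ref{coro: prob C>h^*}, which gives $P[C>m]=(1-\mu_d)^m+\mathrm{err}_m$ for $d$ odd; summing the geometric series $\sum_{m\ge0}(1-\mu_d)^m=\mu_d^{-1}$ produces the leading term $\mu_d^{-1}$. For the second summand I would use the inclusion $\mathcal G^c\subseteq\{C>h^*\}\cup\{C\le h^*,\ ({\sf H})\text{ fails at }\bfs a_C\}\cup\{\omega\text{ inaccurate}\}$; the contribution of $\{C>h^*\}$, again by Corollary~\ref{coro: prob C>h^*}, is $h^*P[C>h^*]=h^*(1-\mu_d)^{h^*}+\cdots$, giving the second leading term $h^*(1-\mu_d)^{h^*}$.

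It then remains to collect the error terms. The deviations $\mathrm{err}_m$ and the error term of Corollary~\ref{coro: prob C>h^*}, once summed over $0\le m\le h^*-1$ and weighted by the factor $h^*$, contribute the explicit $\tfrac{3h^*e^{h^*}}{(d+1)!}$ (the three $\tfrac{1}{(d+1)!}$-type contributions bounded using $\sum_{m=0}^{h^*-1}e^m\le e^{h^*}$), together with the announced $\mathcal O\big(\tfrac{h^*d^s(d+1)^s}{q}+\tfrac{h^*(2-\mu_d)^{h^*}}{q^s}\big)$, the $q^{-s}$ part coming from $\sum_m (2-\mu_d)^m$ and the $q^{-1}$ part from the $2/q$ terms in Corollaries~\ref{coro: probability 1 specialization - asymptotic} and \ref{coro: probability of h specializations - asymptotic}.

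The delicate point, and the step I expect to be the main obstacle, is controlling the contribution of $\{C\le h^*,\ ({\sf H})\text{ fails at }\bfs a_C\}$ (and of $\{\omega\text{ inaccurate}\}$) to $P[\mathcal G^c]$, which must be kept of order $\mathcal O(d^s(d+1)^s/q)$ so that, after the factor $h^*$, it remains within the stated bound. For the $\omega$-part this follows from the product structure of $\FF_{h^*}\times\mathcal F_d^s\times\Omega_d$ and the $\mathcal O(q^{-1})$ failure probability of the solver. For the $({\sf H})$-part Corollary~\ref{coro: probability 1 specializ and red reg sequence} bounds by $2d^s(d+1)^s/q$ the probability that a single, fixed specialization fails $({\sf H})$; the difficulty is that the successful strip $\bfs a_C$ is selected by the data rather than fixed, so a plain union bound over the $h^*$ strips would lose a spurious factor $h^*$. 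I would instead exploit the exchangeability of $\bfs a_1,\dots,\bfs a_{h^*}$ together with the disjointness of the events $\{C=i\}$ to reduce ``$({\sf H})$ fails at the first solution-bearing strip'' to the single-strip estimate of Corollary~\ref{coro: probability 1 specializ and red reg sequence}. Finally, the case $d$ even is handled verbatim, replacing $\mu_d$ by $\mu_{d+1}$ throughout.
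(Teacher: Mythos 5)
Your proposal follows essentially the same route as the paper: the same partition into the ``good'' event (first solution-bearing strip $\le h^*$, condition $({\sf H})$ holds there, solver parameters accurate) and its complement, the same cost bounds $C\cdot\tau$ resp.\ $h^*\tau$ on the two pieces, and the same probabilistic inputs. Your computation of the leading term via $E[\min(C,h^*)]=\sum_{m=0}^{h^*-1}P[C>m]$ is a harmless repackaging of the paper's $\sum_{h=1}^{h^*}hP_h[C_h=h]$ (the paper sums $\sum_{h\ge1}h(1-\mu_d)^{h-1}=\mu_d^{-2}$; you sum the tails), and the consistency Lemma \ref{lemma: consistency conditions} is what lets either sum be interpreted in a single probability space.

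The one place you genuinely depart from the paper is the step you yourself flag as the main obstacle, and there your sketch does not work. You want $P[\,C\le h^*\ \text{and}\ ({\sf H})\ \text{fails at}\ \bfs a_C\,]=\mathcal{O}(d^s(d+1)^s/q)$ with no factor $h^*$, and you propose to get it from exchangeability of $\bfs a_1,\ldots,\bfs a_{h^*}$ plus disjointness of the events $\{C=j\}$. But after symmetrizing, $P[\{C=j\}\cap\{({\sf H})\ \text{fails at}\ \bfs a_j\}]$ becomes $P[Z(\bfs F_s(\bfs a_1,-))>0,\ Z(\bfs F_s(\bfs a_i,-))=0\ (2\le i\le j),\ ({\sf H})\ \text{fails at}\ \bfs a_1]$, and these events are \emph{nested} in $j$, not disjoint; summing them returns you to the union bound $h^*P_1[C_{\sf H}=\infty]$. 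You cannot replace this by a conditional estimate either, since Corollary \ref{coro: probability 1 specializ and red reg sequence} controls $P[({\sf H})\ \text{fails at}\ \bfs a_j]$ unconditionally, not conditioned on $\{C=j\}$, and the two events need not be negatively correlated. The paper simply takes the union bound over the $h^*$ positions (via the projections $\pi_j$ and a fiber count, obtaining $|{\sf D}|/N\le h^*P_1[C_{\sf H}=\infty]$) and absorbs the result into the $q^{-1}$ error term; you should do the same. The only price is an extra factor $h^*$ on the $d^s(d+1)^s/q$ contribution coming from this one failure mode, which is immaterial for the asymptotic regime of the paper (fixed $r,s$, $d\ll q\to\infty$) and is exactly how the paper itself proceeds. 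With that step replaced by the union bound, your argument closes; the rest of your error bookkeeping ($e^{h^*}/(d+1)!$ terms, $(2-\mu_d)^{h^*}/q^s$ terms, the $\mathcal{O}(q^{-1})$ solver failure handled through the product structure of $\FF_{h^*}\times\mathcal{F}_d^s\times\Omega_d$) matches the paper's.
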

\begin{proof}
We first note that to each $(\underline{\bfs a},\bfs F_s)\in
\FF_{h^*}\times \mathcal{F}_d^s$ and $i^*:=C_{h^*}(\underline{\bfs
a},\bfs F_s)\le h^*$, such that $\bfs F_s(\bfs a_{i^*}, -)$
satisfies condition $({\sf H})$, there corresponds a subset
$\Omega_d(\underline{\bfs a},\bfs F_s)$ of ``bad'' choices of
parameters $\omega$ for the zero--dimensional solver under
consideration, that is, choices for which the solver fails to find
an $\fq$--rational solution of the system $\bfs  F_s(\bfs a_{i^*},
-)=\bfs 0$. According to our previous remarks, the probability of
failure is $|\Omega_d(\underline{\bfs a},\bfs
F_s)|/{|\Omega_d|}=\mathcal{O}(q^{-1})$.

We decompose $\FF_{h^*}\times \mathcal{F}_d^s\times \Omega_d$ as the
disjoint union
$$\FF_{h^*}\times \mathcal{F}_d^s\times \Omega_d={\sf C} \cup {\sf D} \cup {\sf E} \cup {\sf F},$$
where
\begin{align*}
 {\sf C}&:=\{(\underline{\bfs a},\bfs F_s, \omega): C_{h^*}(\underline{\bfs a},\bfs F_s)=\infty \},\\
 {\sf D}&:=\{(\underline{\bfs a},\bfs F_s, \omega): i^*:=C_{h^*}(\underline{\bfs a},\bfs F_s)\le h^*
 \textrm{ and } \bfs  F_s(\bfs a_{i^*}, -) \textrm{ does not satisfy } ({\sf H}) \}, \\
 {\sf E}&:=\{(\underline{\bfs a},\bfs F_s, \omega): i^*:=C_{h^*}(\underline{\bfs a},\bfs F_s)\le h^*,
 \bfs  F_s(\bfs a_{i^*}, -) \textrm{ satisfies } ({\sf H}) \textrm{ and } \omega \in
 \Omega_d(\underline{\bfs a},\bfs F_s) \}, \\
 {\sf F}&:=\{(\underline{\bfs a},\bfs F_s, \omega): i^*:=C_{h^*}(\underline{\bfs a},\bfs F_s)\le h^*,
 \bfs  F_s(\bfs a_{i^*}, -) \textrm{ satisfies } ({\sf H}) \textrm{ and } \omega \not \in \Omega_d(\underline{\bfs a},\bfs F_s) \}.
\end{align*}
For each $(\underline{\bfs a},\bfs F_s, \omega)\in {\sf F}$, when
the zero--dimensional solver under consideration is applied to the
system $\bfs F_s(\bfs a_{i^*}, -)=\bfs 0$, it succeeds. Then
Algorithm \ref{algo: basic scheme} stops at the $i^*$th vertical
strip and outputs an $\fq$-rational solution of the system $\bfs
F_s(\bfs a_{i^*}, -)=\bfs 0$, and we have
$$X(\underline{\bfs a},\bfs F_s,\omega)\le
\tau(s,d,q)\,C_{h^*}(\underline{\bfs a},\bfs F_s).$$
On the other hand, for any $(\underline{\bfs a},\bfs F_s, \omega)\in
{\sf C} \cup {\sf D} \cup {\sf E}$, Algorithm \ref{algo: basic
scheme} does not stop searching for $\fq$--rational solutions at the
$i^*$th vertical strip, and the search continues until an
$\fq$--rational solution of a system $\bfs F_s(\bfs a_{i}, -)=\bfs
0$ with $i^*<i\le h^*$ is obtained, or the remaining vertical strips
are searched. In this case we have $X(\underline{\bfs a},\bfs
F_s,\omega)\le \tau(s,d,q)h^*$. We conclude that
\begin{equation}\label{eq: S_A_upper_bound}
E[X] \le \frac{\tau(s,d,q)}{|\FF_{h^*}||\mathcal{F}_d^s||\Omega_d|}
\Bigg(\sum_{(\underline{\bfs a},\bfs F_s, \omega)\in {\sf F}}C_{h^*}
(\underline{\bfs a},\bfs F_s) + h^*\big(|{\sf C}| + |{\sf D}|+ |{\sf
E}|\big)\Bigg)
\end{equation}

Now we study the sum in the right--hand side of \eqref{eq:
S_A_upper_bound}. We have
\begin{align*}
\frac{\tau(s,d,q)}{|\FF_{h^*}||\mathcal{F}_d^s||\Omega_d|}\sum_{(\underline{\bfs
a},\bfs F_s, \omega)\in{\sf F}} C_{h^*}(\underline{\bfs a},\bfs F_s)
& \le
\frac{\tau(s,d,q)}{|\FF_{h^*}||\mathcal{F}_d^s|}\sum_{(\underline{\bfs
a},\bfs F_s):
\,  C_{h^*}(\underline{\bfs a},\bfs F_s)\le h^*}C_{h^*}(\underline{\bfs a},\bfs F_s)\\
& = \frac{\tau(s,d,q)}{|\mathcal{F}_d^s|} \sum_{\bfs F_s\in
\mathcal{F}_d^s} \sum_{h=1}^{h^*}h\frac{|\{\underline{\bfs a}\in
\FF_{h^*}: C_{h^*}(\underline{\bfs a},\bfs F_s)=h\}|}{|\FF_{h^*}|}.
\end{align*}
From the conditions of consistency of Lemma \ref{lemma: consistency
conditions}, it follows that
\begin{align*}
\frac{1}{|\FF_{h^*}||\mathcal{F}_d^s|}\sum_{(\underline{\bfs a},\bfs
F_s): \, C_{h^*}(\underline{\bfs a},\bfs F_s)\le
h^*}C_{h^*}(\underline{\bfs a},\bfs F_s) &=
\frac{1}{|\mathcal{F}_d^s|} \sum_{\bfs F_s\in \mathcal{F}_d^s}
\sum_{h=1}^{{h^*}}h\frac{|\{\underline{\bfs a}\in \FF_{h^*}:
C_{h*}(\underline{\bfs a},\bfs F_s)=h\}|}{|\FF_{h^*}|}
\\&=\sum_{h=1}^{h^*}\frac{h}{|\mathcal{F}_d^s|}\sum_{\bfs
F_s\in \mathcal{F}_d^s}\frac{|\{\underline{\bfs a}\in \FF_h:
C_h(\underline{\bfs a}, \bfs F_s)=h\}|}{|\FF_h|} \\  &=
\sum_{h=1}^{h^*}h\,P_{h}[C_h=h].
\end{align*}
%

%Observe that
%\begin{align*}
%P_{\FF_h\times A}[C_h=h] & = \frac{\big|\{(\underline{\bfs a},\bfs F_s)\in \FF_{h}\times A: C_h(\underline{\bfs a},\bfs F_s)=h\}\big|}{|\FF_{h}||A|}\\
%& \le \frac{|\mathcal{F}_d^s|}{|A|}\cdot  \frac{\big|\{(\underline{\bfs a},\bfs F_s)\in \FF_{h}\times \mathcal{F}_d^s: C_h(\underline{\bfs a},\bfs %F_s)=h\}\big|}{|\FF_{h}||\mathcal{F}_d^s|}\\
%& = \frac{|\mathcal{F}_d^s|}{|A|} P_h[C_h=h].
%\end{align*}

%
%Corollary \ref{coro: number of good systems} shows that
%$\frac{|\mathcal{F}_d^s|}{|A|}=1+\mathcal{O}(s^3 d^{s+1}q^{-1})$.
Assuming that $d$ is odd, by Corollary \ref{coro: probability of h
specializations - asymptotic}  we have
\begin{align*}
\frac{1}{|\FF_{h^*}||\mathcal{F}_d^s|} &\sum_{(\underline{\bfs
a},\bfs F_s): \, C_{h^*}(\underline{\bfs a},\bfs F_s)\le
h^*}C_{h^*}(\underline{\bfs a},\bfs F_s)\\
&\qquad\quad \le \sum_{h=1}^{h^*}h\mu_d(1-\mu_d)^{h-1}
+\mbox{$\frac{2h^*e^{h^*}}{(d+1)!}$} + \mbox{$\frac{h^*(h^*+1)}{q}$}
+\mathcal{O}\big(\mbox{$\frac{h^*(2-\mu_d)^{h^*}}{q^{s}}$}\big)\\
& \qquad\quad\le \mu_d\sum_{h=1}^{h^*}h(1-\mu_d)^{h-1}
+\mbox{$\frac{2h^*e^{h^*}}{(d+1)!}$} + \mbox{$\frac{2{h^*}^2}{q}$}
+\mathcal{O}\big(\mbox{$\frac{h^*(2-\mu_d)^{h^*}}{q^{s}}$}\big).
\end{align*}
Taking into account that $\sum_{n\geq 1}n z^{n-1}=1/(1-z)^2$ for any
$|z|< 1$, we conclude that
\begin{equation}\label{eq: sum_over_F_upper_bound}
\frac{\tau(s,d,q)}{|\FF_{h^*}||\mathcal{F}_d^s||\Omega_d|}
\sum_{(\underline{\bfs a},\bfs F_s, \omega)\in
F}C_{h^*}(\underline{\bfs a},\bfs F_s) \le
\tau(s,d,q)\bigg(\frac{1}{\mu_d}+\mbox{$\frac{2h^*e^{h^*}}{(d+1)!}$}
+ \mbox{$\frac{2{h^*}^2}{q}$}
+\mathcal{O}\big(\mbox{$\frac{h^*(2-\mu_d)^{h^*}}{q^{s}}$}\big)\bigg).
\end{equation}
%Combining \eqref{eq: sum in C(a,F)} and Corollary \ref{coro: means
%in terms of A} with this inequality, we deduce the theorem.

Next we consider the remaining terms in the right--hand side of
\eqref{eq: S_A_upper_bound}. We have
\begin{align*}
\frac{|{\sf C}|}{|\FF_{h^*}||\mathcal{F}_d^s||\Omega_d|}&
=\frac{|\{(\underline{\bfs a},\bfs F_s):  C_{h^*}(\underline{\bfs
a},\bfs F_s)=\infty\}|}{|\FF_{h^*}||\mathcal{F}_d^s|} =
P_{h^*}[C_{h^*}=\infty]=P[C > h^*].
\end{align*}

On the other hand,
\begin{align*}
\frac{|{\sf D}|}{|\FF_{h^*}||\mathcal{F}_d^s||\Omega_d|} &=
\frac{|\{(\underline{\bfs a},\bfs F_s): i^*:=C_{h^*}(\underline{\bfs
a},\bfs F_s)\le h^* \textrm{ and } \bfs  F_s(\bfs a_{i^*}, -)
\textrm{ does not satisfy } ({\sf H}) \}|
}{|\FF_{h^*}||\mathcal{F}_d^s|}\\
& =\sum_{j=1}^{h^*}\frac{|\{(\underline{\bfs a},\bfs F_s):
C_{h^*}(\underline{\bfs a},\bfs F_s)=j\textrm{ and } \bfs  F_s(\bfs
a_{j}, -) \textrm{ does not satisfy } ({\sf H})\}|}
{|\FF_{h^*}||\mathcal{F}_d^s|}\\
& \le \sum_{j=1}^{h^*}\frac{|\pi_j^{-1}(\{C_{\sf H}=\infty\})|
}{|\FF_{h^*}||\mathcal{F}_d^s|},
\end{align*}
where $\pi_j: \FF_{h^*}\times \mathcal{F}_d^s \rightarrow
\fq^{r-s}\times \mathcal{F}_d^s$ is the map $\pi_j(\underline{\bfs
a},\bfs F_s):= (\bfs a_j, \bfs F_s)$. Since
$$|\pi_j^{-1}(\bfs a, \bfs F_s)|=(q^{r-s}-1) \cdots (q^{r-s}-h^*+1)$$
for every $(\bfs a, \bfs F_s)\in \fq^{r-s}\times \mathcal{F}_d^s$,
it follows that
$$|\pi_j^{-1}(\{C_{\sf H}=\infty\})|=(q^{r-s}-1) \cdots
(q^{r-s}-h^*+1)|\{C_{\sf H}=\infty\}|.$$
As a consequence,
\begin{align*}
\frac{|{\sf D}|}{|\FF_{h^*}||\mathcal{F}_d^s||\Omega_d|}&\le
\frac{h^*(q^{r-s}-1) \cdots (q^{r-s}-h^*+1)|\{C_{\sf H}=\infty\}|
}{|\FF_{h^*}||\mathcal{F}_d^s|}= h^*P_1[C_{\sf H}=\infty].
\end{align*}

Finally, to estimate $|{\sf E}|$,  note that for each
$(\underline{\bfs a},\bfs F_s)\in \FF_{h^*}\times \mathcal{F}_d^s$
with $i^*:=C_{h^*}(\underline{\bfs a},\bfs F_s)\le h^*$, such that
$\bfs  F_s(\bfs a_{i^*}, -)$ satisfies $({\sf H})$, we have
$\frac{|\Omega_d(\underline{\bfs a},\bfs
F_s)|}{|\Omega_d|}=\mathcal{O}(q^{-1})$. It follows that
\begin{align*}
\frac{|{\sf E}|}{|\FF_{h^*}||\mathcal{F}_d^s||\Omega_d|}&=
\frac{|\{(\underline{\bfs a},\bfs F_s): i^*:=
C_{h^*}(\underline{\bfs a},\bfs F_s)\le h^* \textrm{ and }
\bfs  F_s(\bfs a_{i^*}, -) \textrm{ satisfies } ({\sf H}) \}|}{|\FF_{h^*}||\mathcal{F}_d^s|}\,\mathcal{O}\big(q^{-1}\big)\\
&= \mathcal{O}\big(q^{-1}\big).
\end{align*}

Combining the estimates for ${\sf C}$, ${\sf D}$ and ${\sf E}$ with
Corollaries \ref{coro: probability 1 specializ and red reg sequence}
and \ref{coro: prob C>h^*} we obtain
\begin{align*}
\frac{\tau(s,d,q)h^*}{|\FF_{h^*}||\mathcal{F}_d^s||\Omega_d|}&\big(|{\sf
C}| + |{\sf D}|+ |{\sf E}|\big)\\&\le  \tau(s,d,q)h^*\big(P[C > h^*]
+P_1[C_{\sf H}=\infty]+\mathcal{O}(q^{-1})\big)\\
& \le
\tau(s,d,q)h^*\Big((1-\mu_d)^{h^*}+\mbox{$\frac{e^{h^*}}{(d+1)!}$} +
\mbox{$\frac{2h^*}{q}$}+ \mbox{$\frac{15(2-\mu_d)^{h^*}}{q^s}$} +
\mathcal{O}\big(\mbox{$\frac{d^s(d+1)^s}{q}$}\big)\Big)
\end{align*}
Taking into account \eqref{eq: sum_over_F_upper_bound} and this
inequality we readily deduce the theorem for $d$ odd. The case $d$
even follows with a similar argument.
\end{proof}

%Since $s^*>{d^2}/{4}$, the term $d^s(1-d^{-s})^{h^*+1}$ tends to
%zero as $d$ and $r$ grow, and therefore the right--hand side of
%\eqref{eq: estimate E(X) r ge 3} behaves as
%${\mu_d}^{-1}\tau(d,r,q)$. We may paraphrase this as saying that, on
%average, at most ${\mu_d}^{-1}\approx 1.58\ldots$ vertical strips
%are searched until an $\fq$--rational point of the input polynomial
%is obtained. For perspective, we remark that the probabilistic
%algorithms of \cite{GaShSi03} (for bivariate polynomials) and
%\cite{CaMa06a} and \cite{Matera10} (for $r$--variate polynomials)
%propose $d$ searches in order to achieve a probability of success
%greater than 1/2.

We briefly mention the average-case complexity of Algorithm
\ref{algo: basic scheme} performing the zero-dimensional searches
with Gr\"obner bases and Kronecker-like algorithms.
\begin{corollary}\label{coro: average-case compl with GB}
Let notations and assumptions be as in Theorem \ref{th: average-case
compl}. Denote by $E_{\tt GB}[X]$ and $E_{\tt K}[X]$ the
average--case complexities of Algorithm \ref{algo: basic scheme},
performing zero-dimensional searches with Gr\"obner bases and the
Kronecker algorithm respectively. We have
\begin{align*}
E_{\tt GB}[X] &= \mathcal{O}^\sim\Big(\big(\mbox{$\binom{d+r}{r}+
d{sd+1\choose s}^\omega$}+d^{3s}+d^s\log
q\big)\big(1+\mbox{$\frac{h^*e^{h^*}}{(d+1)!}$}+\mbox{$\frac{d^{2s}}{q}$}+\mbox{$\frac{
h^*(2-\mu_d)^{h^*}\!\!}{q^s}$}\big)\Big),\\[1ex]
E_{\tt K}[X] &= \mathcal{O}^\sim \Big(\big(\mbox{$\binom{d+r}{r}+
\binom{d+s}{s}d^{2s}$}+d^s\log q\big)\big(
1+\mbox{$\frac{h^*e^{h^*}}{(d+1)!}$}+\mbox{$\frac{d^{2s}}{q}$}+\mbox{$\frac{
h^*(2-\mu_d)^{h^*}\!\!}{q^s}$}\big)\Big),
\end{align*}
where the notation $\mathcal{O}^\sim$ ignores logarithmic factors
and $\omega$ is the exponent of the complexity of multiplication of
square matrices with coefficients in $\fq$.
\end{corollary}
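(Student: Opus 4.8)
The plan is to specialize Theorem \ref{th: average-case compl} by substituting, for each of the two solvers, the concrete cost $\tau(d,s,q)$ of a single search on a vertical strip, and then to simplify the combined prefactor. First I would simplify the factor multiplying $\tau(d,s,q)$ in Theorem \ref{th: average-case compl}. Since $\mu_d^{-1}\le 2$ and $h^*(1-\mu_d)^{h^*}$ are both $\mathcal{O}(1)$ for fixed $r,s$, and since $(d+1)^s=\mathcal{O}(d^s)$ with $h^*=r-s+1$ constant, the bound of Theorem \ref{th: average-case compl} reads
$$E[X] = \mathcal{O}\Big(\tau(d,s,q)\big(1 + \mbox{$\frac{h^*e^{h^*}}{(d+1)!}$} + \mbox{$\frac{d^{2s}}{q}$} + \mbox{$\frac{h^*(2-\mu_d)^{h^*}}{q^s}$}\big)\Big)$$
in both the odd and even cases, which already exhibits the second factor of the statement. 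It then remains to bound $\tau(d,s,q)$ for each solver.

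For the Gr\"obner-basis search I would bound $\tau(d,s,q)$ as a sum of three contributions. Computing the specialization $\bfs F_s(\bfs a_i,-)$ costs $\mathcal{O}^\sim\big(\binom{d+r}{r}\big)$ operations, as noted in Section \ref{section: intro}. On the branch relevant to Theorem \ref{th: average-case compl} the specialized system is a reduced regular sequence of $s$ polynomials of degree at most $d$ in the $s$ variables $X_{r-s+1},\ldots,X_r$ (this is exactly condition $({\sf H})$), so its degree of regularity is at most $s(d-1)+1$; hence the Macaulay matrices involved have at most $\binom{s(d-1)+1+s}{s}=\binom{sd+1}{s}$ columns, and the complexity analysis of $F_5$ for regular sequences (\cite{BaFaSa15}, \cite[Chapter 26]{BoChGiLeLeSaSc17}) bounds the Gr\"obner-basis computation by $\mathcal{O}^\sim\big(d\binom{sd+1}{s}^\omega\big)$. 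Passing to a shape/lex basis by the FGLM change of order costs $\mathcal{O}(s(d^s)^3)=\mathcal{O}^\sim(d^{3s})$, since the ideal has degree $d^s$ (Lemma \ref{lemma: Bezout implies H}), and extracting the $\fq$-rational roots of the resulting univariate polynomial of degree at most $d^s$ costs $\mathcal{O}^\sim(d^s\log q)$. Summing gives $\tau(d,s,q)=\mathcal{O}^\sim\big(\binom{d+r}{r}+d\binom{sd+1}{s}^\omega+d^{3s}+d^s\log q\big)$, and inserting this into the simplified bound above yields $E_{\tt GB}[X]$.

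For the Kronecker search I would proceed identically, the specialization and final root extraction costing again $\mathcal{O}^\sim\big(\binom{d+r}{r}\big)$ and $\mathcal{O}^\sim(d^s\log q)$ respectively; only the solver cost changes. On a reduced regular sequence the Kronecker solver is quadratic in the geometric degree $\delta=d^s$ of the variety (Lemma \ref{lemma: Bezout implies H}) and linear in the $\mathcal{O}\big(\binom{d+s}{s}\big)$ evaluation cost of the dense input polynomials, so by \cite{CaMa06a} (or \cite{HoLe21}) its cost is $\mathcal{O}^\sim\big(\binom{d+s}{s}d^{2s}\big)$, whence $\tau(d,s,q)=\mathcal{O}^\sim\big(\binom{d+r}{r}+\binom{d+s}{s}d^{2s}+d^s\log q\big)$; substitution gives $E_{\tt K}[X]$.

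The main obstacle is the second and third steps: matching the combinatorial quantities of the statement ($\binom{sd+1}{s}$, $d^{3s}$, and $\binom{d+s}{s}d^{2s}$) to the standard complexity estimates of the two solvers, which hinges on the reduced-regular-sequence structure of the specialized system---the degree-of-regularity bound $s(d-1)+1$ for the Gr\"obner computation and the bound $d^s$ on the geometric degree for both the FGLM step and the Kronecker solver. The use of these ``generic'' cost bounds inside $\tau(d,s,q)$ is legitimate because, as stipulated in the algorithmic model, each search is truncated at $\tau(d,s,q)$ operations, so inputs failing $({\sf H})$ never exceed this budget.
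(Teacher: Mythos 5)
Your proposal is correct and follows essentially the same route as the paper: substitute into Theorem \ref{th: average-case compl} the per-strip cost $\tau(d,s,q)$ broken into specialization $\mathcal{O}^\sim\big(\binom{d+r}{r}\big)$, solver cost ($\mathcal{O}^\sim\big(d\binom{sd+1}{s}^\omega\big)$ plus FGLM $\mathcal{O}^\sim(d^{3s})$ for Gr\"obner bases, $\mathcal{O}^\sim\big(\binom{d+s}{s}d^{2s}\big)$ for Kronecker), and univariate root-finding $\mathcal{O}^\sim(d^s\log q)$, then absorb the constant terms of the prefactor. Your added justifications (degree of regularity $s(d-1)+1$, ideal degree $d^s$ via Lemma \ref{lemma: Bezout implies H}, and the truncation remark) only make explicit what the paper leaves to the cited references.
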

\begin{proof}
Denote by $\tau_{\tt GB}(d,s,q)$ the number of arithmetic operations
in $\fq$ required to perform a zero-dimensional search using
Gr\"obner bases, assuming that hypothesis $({\sf H})$ holds. As
explained in the introduction, if $D:=\binom{d+r}{r}$, then
$\mathcal{O}(sD)$ arithmetic operations in $\fq$ are performed to
compute the partial specialization $\bfs F_s(\bfs a,-)$, where $\bfs
F_s\in\mathcal{F}_d^s$ is the input system and $\bfs a\in\fq^{r-s}$
is the vertical strip under consideration. %Recall that the notation
%$\mathcal{O}^\sim$ ignores logarithmic factors.
Then solving the zero-dimensional system $\bfs F_s(\bfs a,-)=\bfs 0$
for the degree reverse lexicographic order requires
$\mathcal{O}\big(s^2d{sd+1\choose s}^\omega\big)$ arithmetic
operations in $\fq$ (see, e.g., \cite{BaFaSa15} or \cite[Chapter
26]{BoChGiLeLeSaSc17}). Then the {\tt FGLM} algorithm (see
\cite{FaGiLaMo93}) is applied to solve the system $\bfs F_s(\bfs
a,-)=\bfs 0$ for the lexicographical order, with
$\mathcal{O}(sd^{3s})$ arithmetic operations in $\fq$. Finally, we
apply a routine for computing the $\fq$-rational roots of the
resulting univariate polynomial with $\mathcal{O}^\sim(d^s\log q)$
operations in $\fq$ (see, e.g., \cite[Algorithm 14.15]{GaGe99}). We
conclude that
$$\tau_{\tt GB}(d,s,q)\in \mathcal{O}^\sim\big(s\mbox{$D+
s^2d{sd+1\choose s}^\omega$}+sd^{3s}+d^s\log q\big)=
\mathcal{O}^\sim\big(\mbox{$D+ d{sd+1\choose
s}^\omega$}+d^{3s}+d^s\log q\big).$$

On the other hand, if $\tau_{\tt K}(d,s,q)$ is the number of
arithmetic operations in $\fq$ required to perform a
zero-dimensional search using the Kronecker algorithm, assuming that
hypothesis $({\sf H})$ holds, then, according to, e.g.,
\cite{CaMa06a}, one has
$$\tau_{\tt K}(d,s,q)\in \mathcal{O}^\sim\big(\mbox{$sD+
\binom{d+s}{s}d^{2s}$}+d^s\log q\big)=
\mathcal{O}^\sim\big(\mbox{$D+ \binom{d+s}{s}d^{2s}$}+d^s\log
q\big).$$
This proves the estimates in the corollary.
\end{proof}

Taking into account the bound ${e+t\choose t}\le \frac{3}{2}e^t$,
which holds for $e\ge 2$, we may simplify the bounds in the
corollary in the following way:
\begin{align*}
E_{\tt GB}[X] &\in \mathcal{O}^\sim\Big(\big(d^r+(sd)^{
s\omega}+d^s\log q\big)\big(1+
\mbox{$\frac{h^*e^{h^*}}{(d+1)!}$}+\mbox{$\frac{d^{2s}}{q}$}+\mbox{$\frac{
h^*(2-\mu_d)^{h^*}\!\!}{q^s}$}\big)\Big),\\[1ex]
E_{\tt K}[X] &\in\mathcal{O}^\sim \Big(\big(d^r+ d^{3s}+d^s\log
q\big)\big(1+
\mbox{$\frac{h^*e^{h^*}}{(d+1)!}$}+\mbox{$\frac{d^{2s}}{q}$}+\mbox{$\frac{
h^*(2-\mu_d)^{h^*}\!\!}{q^s}$}\big)\Big),
\end{align*}
%
%----------------------------------------------------------------------------
%----------------------------------------------------------------------------
%----------------------------------------------------------------------------
%----------------------------------------------------------------------------
%
\subsection{Probability of success}
We end with an estimate on the probability of success of our
algorithm. For this purpose, we first analyze the probability that,
given $h$ with $1\le h\le h^*$, after exactly $h$ random choices
$\bfs a_1,\ldots,\bfs a_h$ in $\fq ^{r-s}$ we obtain a system $\bfs
F_s(\bfs a_h,-)=\bfs 0$ with $\fq$-rational solutions which
satisfies hypothesis $({\sf H})$. More precisely, we consider the
random variable $C_h:=\FF_h\times \mathcal{F}^s_d\rightarrow
\{1,\ldots,h, \infty\}$ defined as in \eqref{eq: definition C_h} and
analyze the probability of the set %%
$$\{C_h=h\}\cap S_{\sf H}^h,\quad S_{\sf H}^h:=\{(\underline{\bfs a},\bfs F_s)
\in\FF_h\times\mathcal{F}_d^s:\bfs F_s(\bfs a_h,-)\mbox{ satisfies
hypothesis }({\sf H})\}.$$
We have the following result.
\begin{lemma}\label{lemma: prob C_h=h and S_h}
For $q >2d^s(d+1)^s$, $s<d$ and $1<h\le h^*$ we have
\begin{align*}
\big|P_h[\{C_h=h\}\cap S_{\sf H}^h]-\mu_d(1-\mu_d)^{h-1}\big| &\le
\mbox{$\frac{e^{h-1} +\frac{1}{2}}{(d+1)!}$}
+\mbox{$\frac{2d^s(d+1)^s+2}{q}$}+\mbox{$\frac{5
(2-\mu_d)^{h-1}\!\!}{q^s}$}\textrm{ for }d\textrm{ odd},\\[1ex]
\big|P_h[\{C_h=h\}\cap S_{\sf
H}^h]-\mu_{d+1}(1-\mu_{d+1})^{h-1}\big| &\le \mbox{$\frac{e^{h-1}
+\frac{1}{2}}{(d+1)!}$}
+\mbox{$\frac{2d^s(d+1)^s+2}{q}$}+\mbox{$\frac{5
(2-\mu_d)^{h-1}\!\!}{q^s}$}\textrm{ for }d\textrm{ even}.
\end{align*}
\end{lemma}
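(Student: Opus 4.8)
The plan is to obtain this estimate by combining the two main results already established: the asymptotic estimate for $P_h[C_h=h]$ from Corollary \ref{coro: probability of h specializations - asymptotic}, and the bound on the probability that a single specialization fails condition $({\sf H})$ from Corollary \ref{coro: probability 1 specializ and red reg sequence}. The guiding observation is that the events $\{C_h=h\}$ and $\{C_h=h\}\cap S_{\sf H}^h$ differ only in the extra demand that the accepting vertical strip $\bfs F_s(\bfs a_h,-)$ satisfy $({\sf H})$, and by Corollary \ref{coro: probability 1 specializ and red reg sequence} this latter condition is violated with probability $\mathcal{O}(d^s(d+1)^s/q)$.

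First I would split $\{C_h=h\}$ into the disjoint union of $\{C_h=h\}\cap S_{\sf H}^h$ and its relative complement, giving
\begin{align*}
0\le P_h[C_h=h]-P_h[\{C_h=h\}\cap S_{\sf H}^h]
&=P_h\big[\{C_h=h\}\cap(\FF_h\times\mathcal{F}_d^s\setminus S_{\sf H}^h)\big]\\
&\le P_h[\FF_h\times\mathcal{F}_d^s\setminus S_{\sf H}^h].
\end{align*}
It therefore remains to control the probability that $\bfs F_s(\bfs a_h,-)$ fails $({\sf H})$. To do this I would use a symmetry/projection argument identical to the one employed for the set ${\sf D}$ in the proof of Theorem \ref{th: average-case compl}: writing $\pi_h(\underline{\bfs a},\bfs F_s):=(\bfs a_h,\bfs F_s)$, each fiber of $\pi_h$ over $\fq^{r-s}\times\mathcal{F}_d^s$ has cardinality $(q^{r-s}-1)\cdots(q^{r-s}-h+1)$, and since the complement of $S_{\sf H}^h$ is precisely $\pi_h^{-1}(\{C_{\sf H}=\infty\})$, cancelling this common factor in $N_h$ yields
\begin{equation*}
P_h[\FF_h\times\mathcal{F}_d^s\setminus S_{\sf H}^h]=P_1[C_{\sf H}=\infty]=1-P_1(S_{\sf H})\le\frac{2d^s(d+1)^s}{q},
\end{equation*}
the final inequality being Corollary \ref{coro: probability 1 specializ and red reg sequence}, whose hypothesis $q>2d^s(d+1)^s$ is exactly the one assumed here.

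Finally I would apply the triangle inequality, bounding $\big|P_h[\{C_h=h\}\cap S_{\sf H}^h]-\mu_d(1-\mu_d)^{h-1}\big|$ by the sum of the bound $\tfrac{2d^s(d+1)^s}{q}$ obtained above and the estimate of Corollary \ref{coro: probability of h specializations - asymptotic} for $\big|P_h[C_h=h]-\mu_d(1-\mu_d)^{h-1}\big|$; the terms $\tfrac{2}{q}$ and $\tfrac{2d^s(d+1)^s}{q}$ then combine into $\tfrac{2d^s(d+1)^s+2}{q}$, producing exactly the stated bound for $d$ odd, with the even case following \emph{mutatis mutandis}. I expect no substantial obstacle here; the only step requiring care is the symmetry argument identifying $P_h[\FF_h\times\mathcal{F}_d^s\setminus S_{\sf H}^h]$ with $P_1[C_{\sf H}=\infty]$, which is however already carried out in the treatment of the set ${\sf D}$ within the proof of Theorem \ref{th: average-case compl}.
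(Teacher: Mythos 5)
Your proposal is correct and follows essentially the same route as the paper: the paper invokes the Fr\'echet inequalities to get $P_h[C_h=h]-\big(1-P_h[S_{\sf H}^h]\big)\le P_h[\{C_h=h\}\cap S_{\sf H}^h]\le P_h[C_h=h]$, which is exactly your disjoint-union decomposition, and then uses the identification $P_h[S_{\sf H}^h]=P_1[S_{\sf H}]$ together with Corollaries \ref{coro: probability 1 specializ and red reg sequence} and \ref{coro: probability of h specializations - asymptotic} and the triangle inequality, just as you do.
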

\begin{proof}
For $d$ odd, according to the Fr\'echet inequalities,
$$
\max\{P_h[C_h=h]+P_h[S_{\sf H}^h]-1, 0\} \leq P_h[\{C_h=h\}\cap
S_{\sf H}^h] \leq \min \{P_h[C_h=h], P_h[S_{\sf H}^h]\}.$$

It is easy to see that $P_h[S_{\sf H}^h]=P_1[S_{\sf H}]$. Therefore,
by Corollary \ref{coro: probability 1 specializ and red reg
sequence} it follows that %%
$$
P_h[C_h=h]-\mbox{$\frac{2d^s(d+1)^s}{q}$} \leq P_h[\{C_h=h\}\cap
S_{\sf H}^h] \leq P_h[C_h=h].$$ %%
As a consequence, from Corollaries \ref{coro: probability 1
specialization - asymptotic} and \ref{coro: probability of h
specializations - asymptotic} we easily deduce the lemma. The case
$d$ even follows similarly.
\end{proof}

Now we are able to estimate the probability of success of Algorithm
\ref{algo: basic scheme}.
\begin{theorem}\label{th: probability success}
Let $q >2d^s(d+1)^s$ and $s<d$. If $P$ denotes the probability of
failure of Algorithm \ref{algo: basic scheme}, then
\begin{align*}
\big|P-(1-\mu_d)^{h^*}\big| &\le \mbox{$\frac{e^{h^*}}{(d+1)!}$}
+\mathcal{O}\big(\mbox{$\frac{h^*d^s(d+1)^s}{q}
+\frac{(2-\mu_d)^{h^*}\!\!}{q^s}$}\big)\textrm{ for }d\textrm{
odd},\\[1ex]
\big|P-(1-\mu_{d+1})^{h^*}\big| &\le \mbox{$\frac{e^{h^*}}{(d+1)!}$}
+\mathcal{O}\big(\mbox{$\frac{h^*d^s(d+1)^s}{q}
+\frac{(2-\mu_d)^{h^*}\!\!}{q^s}$}\big)\textrm{ for }d\textrm{
even}.
\end{align*}
\end{theorem}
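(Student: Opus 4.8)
The plan is to identify the failure event of Algorithm~\ref{algo: basic scheme} with a subset of the sets in the decomposition $\FF_{h^*}\times\mathcal{F}_d^s\times\Omega_d={\sf C}\cup{\sf D}\cup{\sf E}\cup{\sf F}$ introduced in the proof of Theorem~\ref{th: average-case compl}, and then to transfer the estimate for $P[C>h^*]$ from Corollary~\ref{coro: prob C>h^*}. Write $S$ for the set of triples $(\underline{\bfs a},\bfs F_s,\omega)$ on which the algorithm returns ``{\tt Failure}'', so that the probability of failure in the statement is $P=P[S]$. First I would check that the algorithm succeeds on every triple of ${\sf F}$: on such a triple the first vertical strip $\bfs a_{i^*}$ carrying an $\fq$-rational solution satisfies $({\sf H})$ and the parameter $\omega$ is a good choice, so the zero-dimensional solver succeeds at the $i^*$th step; since no solution exists at the strips preceding $i^*$, the algorithm cannot have stopped earlier, and hence it outputs an $\fq$-rational zero. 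Conversely, on every triple of ${\sf C}$ we have $C=\infty$, so no vertical strip contains a solution and the algorithm necessarily fails. This yields the inclusions ${\sf C}\subseteq S\subseteq {\sf C}\cup{\sf D}\cup{\sf E}$ and therefore the two-sided bound $P[{\sf C}]\le P\le P[{\sf C}]+P[{\sf D}]+P[{\sf E}]$.

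Next I would recall that $P[{\sf C}]=P[C>h^*]$ and reuse, verbatim from the proof of Theorem~\ref{th: average-case compl}, the estimates $P[{\sf D}]\le h^*\,P_1[C_{\sf H}=\infty]$ and $P[{\sf E}]=\mathcal{O}(q^{-1})$. Combining the first of these with Corollary~\ref{coro: probability 1 specializ and red reg sequence}, which gives $P_1[C_{\sf H}=\infty]=1-P_1(S_{\sf H})\le 2d^s(d+1)^s/q$, shows that the additional failure mass satisfies $P[{\sf D}]+P[{\sf E}]=\mathcal{O}(h^*d^s(d+1)^s/q)$. Consequently
$$P[C>h^*]\le P\le P[C>h^*]+\mathcal{O}\big(h^*d^s(d+1)^s/q\big).$$

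To finish, I would substitute the estimate of Corollary~\ref{coro: prob C>h^*} for $P[C>h^*]$ into both sides. Its error terms $2h^*/q$ and $15(2-\mu_d)^{h^*}/q^s$ are absorbed into $\mathcal{O}(h^*d^s(d+1)^s/q)$ (since $d^s(d+1)^s\ge1$) and into $\mathcal{O}((2-\mu_d)^{h^*}/q^s)$ respectively, while the main term $(1-\mu_d)^{h^*}$ and the genuinely nonnegligible error $e^{h^*}/(d+1)!$ are kept explicit. This gives $|P-(1-\mu_d)^{h^*}|\le e^{h^*}/(d+1)!+\mathcal{O}\big(h^*d^s(d+1)^s/q+(2-\mu_d)^{h^*}/q^s\big)$ for $d$ odd; the case $d$ even follows identically with $\mu_{d+1}$ in place of $\mu_d$, using the second line of each cited corollary.

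The step I expect to be the main obstacle is the combinatorial bookkeeping of the failure event rather than any analytic estimate: one must argue carefully that ${\sf D}$ and ${\sf E}$ may only enter the upper bound (on such triples the algorithm is permitted to fail, but it may instead succeed at a later strip, so they cannot be used for a matching lower bound), and that success is genuinely decided at the first strip $i^*$ carrying a solution precisely when $({\sf H})$ holds there and $\omega$ is good. Once the inclusions ${\sf C}\subseteq S\subseteq{\sf C}\cup{\sf D}\cup{\sf E}$ are justified, the remainder is a direct recycling of the estimates already established for the average-case complexity.
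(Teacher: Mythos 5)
Your argument is correct, and it reaches the stated bounds by a route that is genuinely different from the paper's. The paper does not sandwich the failure event inside the decomposition ${\sf C}\cup{\sf D}\cup{\sf E}\cup{\sf F}$; instead it estimates the \emph{success} probability strip by strip, via Lemma \ref{lemma: prob C_h=h and S_h}, which applies the Fr\'echet inequalities to each event $\{C_h=h\}\cap S_{\sf H}^h$, sums the resulting errors over $h=1,\dots,h^*$ using $\sum_{h=1}^{h^*}\mu_d(1-\mu_d)^{h-1}=1-(1-\mu_d)^{h^*}$, and then applies the Fr\'echet inequalities once more to fold in the $\mathcal{O}(q^{-1})$ failure probability of the zero-dimensional routine. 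Your version bypasses Lemma \ref{lemma: prob C_h=h and S_h} and the Fr\'echet inequalities entirely: the inclusions ${\sf C}\subseteq S\subseteq{\sf C}\cup{\sf D}\cup{\sf E}$ (both of which are justified by exactly the observations the paper already makes about ${\sf C}$ and ${\sf F}$ in the proof of Theorem \ref{th: average-case compl}) give $P[C>h^*]\le P\le P[C>h^*]+P[{\sf D}]+P[{\sf E}]$, and the previously established bounds $P[{\sf D}]\le h^*P_1[C_{\sf H}=\infty]\le 2h^*d^s(d+1)^s/q$ and $P[{\sf E}]=\mathcal{O}(q^{-1})$, together with Corollary \ref{coro: prob C>h^*}, yield the claim; the bookkeeping of which error terms are absorbed where is right. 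What each approach buys: yours is shorter, reuses already-proved ingredients verbatim, and is arguably cleaner about the asymmetry of the event ${\sf D}$ (where the algorithm \emph{may} still succeed, so ${\sf D}$ legitimately enters only the upper bound), since the one-sided inclusion makes this explicit. The paper's route, at the cost of the extra Lemma \ref{lemma: prob C_h=h and S_h}, produces along the way per-$h$ estimates for $P_h[\{C_h=h\}\cap S_{\sf H}^h]$, i.e.\ the probability that the algorithm succeeds at exactly the $h$th strip with a well-conditioned specialization, which is information of independent interest that your sandwich does not provide. Both arguments rest on the same underlying corollaries and give error terms of the same order.
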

\begin{proof}
Suppose that $d$ is odd. Observe that
$$\sum_{h=1}^{h^*}\mu_d(1-\mu_d)^{h-1}=1-(1-\mu_d)^{h^*}.$$
By Lemma \ref{lemma: prob C_h=h and S_h}, the probability $P^*$
that, for a random choice of $\underline{\bfs a}:=(\bfs
a_1,\ldots,\bfs a_{h^*})\in\FF_{h^*}$ and $\bfs
F_s\in\mathcal{F}_d^s$, there is $h$ with $1\le h\le h^*$ such that
$C(\bfs F,\underline{\bfs a})=h$ and $\bfs F_s(\bfs a_h,-)$
satisfies condition $({\sf H})$ can be estimated as follows:
\begin{align*}
\big|P^*-\big(1-(1-\mu_{d+1})^{h^*}\big)\big|&\le
\sum_{h=1}^{h^*}|P_h[\{C_h=h\}\cap
S_{\sf H}^h]-\mu_d(1-\mu_d)^{h-1}|\\
&\le \sum_{h=1}^{h^*}\Big(\mbox{$\frac{e^{h-1}
+\frac{1}{2}}{(d+1)!}$} +\mbox{$\frac{5
(2-\mu_d)^{h-1}\!\!}{q^s}$}\Big)
+\mbox{$\frac{2h^*(d^s(d+1)^s+1)}{q}$}\\
&\le \mbox{$\frac{e^{h^*}}{(d+1)!}$}+\mbox{$\frac{5}{1-\mu_d}
\frac{(2-\mu_d)^{h^*}\!\!}{q^s}$}
+\mbox{$\frac{2h^*(d^s(d+1)^s+1)}{q}$}.
\end{align*}

It remains to take into account the probability that the routine for
zero-dimensional search is successful. Taking into account that such
a routine has a probability of success of order
$\mathcal{O}(q^{-1})$, applying the Fr\'echet inequalities as in the
proof of Lemma \ref{lemma: prob C_h=h and S_h} we readily deduce the
statement of the theorem. The case $d$ even follows similarly.
\end{proof}

%----------------------------------------------------------------------------
%----------------------------------------------------------------------------
%----------------------------------------------------------------------------
%----------------------------------------------------------------------------
%
%----------------------------------------------------------------------------
%----------------------------------------------------------------------------
%----------------------------------------------------------------------------
%----------------------------------------------------------------------------
%
\bibliographystyle{amsalpha}

\bibliography{refs1, finite_fields}

\end{document}

%
%----------------------------------------------------------------------------
%----------------------------------------------------------------------------
%----------------------------------------------------------------------------
%----------------------------------------------------------------------------
%----------------------------------------------------------------------------
%----------------------------------------------------------------------------
%----------------------------------------------------------------------------
%----------------------------------------------------------------------------
%----------------------------------------------------------------------------
%----------------------------------------------------------------------------
%----------------------------------------------------------------------------
%----------------------------------------------------------------------------
%----------------------------------------------------------------------------
%----------------------------------------------------------------------------
%----------------------------------------------------------------------------
%----------------------------------------------------------------------------
%

\newpage \ \newpage

Let $S_3\subset \fq^{r-1}\times \mathcal{F}_{d}^s$ the set given by
$$S_3:=\{(\bfs a, \bfs F_s)\in \fq^{r-1}\times \mathcal{F}_{d}^s: (\exists \, x_r\in \fq) \bfs F_s(\bfs a,x_r)=0\}.$$

Let $\fqs[\bfs{X}]_d:=\{F\in \fqs[X_1,\dots, X_r]: \deg F\le d\}$
and $S'_3\subset \fq^{r-1}\times \fqs[\bfs{X}]_d$ the set
$$S'_3:=\{(\bfs a, F)\in \fq^{r-1}\times \fqs[\bfs{X}]_d: (\exists \, x_r\in \fq)  F(\bfs a,x_r)=0\}.$$

We consider the sets $\fq^{r-1}\times \mathcal{F}_{d}^s$ and
$\fq^{r-1}\times \fqs[\bfs{X}]_d$ endowed with uniform probability.

\begin{lemma} $P(S_3)=P(S'_3)$.
\end{lemma}
\begin{proof} Since $|\fq^{r-1}\times \mathcal{F}_{d}^s|=|\fq^{r-1}\times \fqs[\bfs{X}]_d|=q^{r-1+s\binom{d+r}{r}}$, it suffices to prove that $|S_3|=|S'_3|$.
 Let $\{\alpha_1, \dots, \alpha_s\}$ be a basis of the $\fq$-vector space $\fqs$.
The map
\begin{alignat*}{2}
    \phi:&  \mathcal{F}_{d}^s \rightarrow \fqs[\bfs{X}]_d, \\
    \phi(F_1, \dots, F_s)& :=\alpha_1F_1+ \cdots + \alpha_s F_s,
\end{alignat*}
 is clearly a isomorphism of $\fq$-vector spaces. It induces a bijection $(\bfs a, (F_1, \dots, F_s)) \mapsto (\bfs a, \phi(F_1, \dots, F_s))$ between the sets  $\fq^{r-1}\times \mathcal{F}_{d}^s$ and $\fq^{r-1}\times \fqs[\bfs{X}]_d$. It is easy to see that $(\bfs a, (F_1, \dots, F_s))\in S_3$ if and only if $(\bfs a, \phi(F_1, \dots, F_s))\in S'_3$. Thus the bijection maps $S_3$ onto $S'_3$ which proves the lemma.
\end{proof}

\begin{theorem}
    For $q>d$, we have that
    $$
     P(S_3):=\sum_{j=1}^{d}(-1)^j\binom{q}{j}q^{-sj}+ (-1)^{d}\binom{q-1}{d}q^{-s(d+1)}.
    $$
\end{theorem}
\begin{proof}
    For any $F\in \fqs[\bfs{X}]_d$, we denote by $VS(F)\subset \fq$ the set of vertical strips where the polynomial $F$ has an $\fq$-rational zero and by $NS(F)$ its cardinality, that is,
    $$
    VS(F):=\{\bfs a \in \fq^{r-1}: (\exists \, x\in \fq)\,  F(\bfs a, x)= 0\}, \quad NS(F):=|VS(F)|.
    $$
    It is easy to see that  $S'_3=\bigcup_{F\in \fqs[\bfs{X}]_d}VS(F)\times \{F\}$. Since this is a union of disjoint subsets of $\fq^{r-1}\times \fqs[\bfs{X}]_d$, it follows that
    $$
    P(S_3)=P(S'_3)=\frac{|S'_3|}{|\fq^{r-1}\times \fqs[\bfs{X}]_d|}=\frac{1}{q^{r-1}|\fqs[\bfs{X}]_d|}\sum_{F\in \fqs[\bfs{X}]_d}NS(F).
    $$
    Fix $F\in \fqs[\bfs{X}]_d$. Observe that
    $$
    VS(F)=\bigcup_{x\in \fq}\{\bfs a\in \fq^{r-1}: F(\bfs a, x)=\bfs 0\}.
    $$
    As a consequence, by the inclusion-exclusion principle we obtain
    \begin{align*}
        % \nonumber to remove numbering (before each equation)
        NS(F) &= \Bigg|\bigcup_{x\in \fq}\{\bfs a\in \fq^{r-1}: F(\bfs a, x)= 0\}\Bigg| \\
        &= \sum_{j=1}^{q}(-1)^{j-1}\sum_{\mathcal{X}_j\subset \fq}|\{\bfs a\in \fq^{r-1}:
        (\forall x\in \mathcal{X}_j)\, F(\bfs a, x)=0\}|,
    \end{align*}
    where $\mathcal{X}_j$ runs through all the subsets of $\fq$ of cardinality $j$. We conclude that
    $$
\sum_{F\in \fqs[\bfs{X}]_d}NS(F)=\sum_{F\in \fqs[\bfs{X}]_d}
\sum_{j=1}^{q}(-1)^{j-1}\sum_{\mathcal{X}_j\subset \fq}\big|\{\bfs
a\in \fq^{r-1}: (\forall x\in \mathcal{X}_j)\, F(\bfs a, x)=
0\}\big|
    $$

For any $j$ with $1\le j \le q$, we denote
\begin{align*}
\mathcal{N}_j&=\frac{1}{q^{r-1}|\fqs[\bfs{X}]_d|}\sum_{F\in
\fqs[\bfs{X}]_d}\sum_{\mathcal{X}_j \subset \fq}\big|\{\bfs a\in
\fq^{r-1}: (\forall x\in
\mathcal{X}_j)\, F(\bfs a, x)= 0\}\big|\\
&=\frac{1}{q^{r-1}|\fqs[\bfs{X}]_d|}\sum_{\mathcal{X}_j \subset
\fq}\sum_{\bfs a\in \fq^{r-1}} |\{F\in \fqs[\bfs{X}]_d: (\forall
x\in \mathcal{X}_j)\, F(\bfs a, x)= 0\}|.\end{align*}
If $j\le d$ and $\bfs a\in \fq^{r-1}$ is fixed then the equalities
$F(\bfs a, x)=0$ $(x\in \mathcal{X}_j)$ are $j$ linearly-independent
conditions on the coefficients of $F$ in the $\fqs$-vector space
$\fqs[\bfs{X}]_d$.It follows that
\begin{align*}
\mathcal{N}_j&=\frac{1}{q^{r-1}|\fqs[\bfs{X}]_d|}\sum_{\mathcal{X}_j
\subset \fq}\sum_{\bfs a \in \fq^{r-1}}|\{F\in \fqs[\bfs{X}]_d:
(\forall x\in \mathcal{X}_j)\, F(\bfs a, x)= 0\}|\\
&=\frac{1}{q^{r-1}(q^{s})^{\dim_{\fqs}
\fqs[\bfs{X}]_d}}\sum_{\mathcal{X}_j \subset
\fq}\sum_{\bfs a \in \fq^{r-1}}(q^{s})^{\dim_{\fqs} \fqs[\bfs{X}]_d-j}\\
&=\binom{q}{j}q^{-sj}.\end{align*}

On the other hand, if $j\ge d+1$, then $F(\bfs a, x)=0$ for every
$x\in \mathcal{X}_j$ if and only if $F(\bfs a, X_r)=0$. The
condition $F(\bfs a, X_r)=0$ is expressed by means of $d+1$
linearly-independent linear equations on the coefficients of $F$ in
$\fqs[\bfs{X}]_d$. We conclude that
\begin{align*}
\mathcal{N}_j &=\frac{1}{q^{r-1}(q^{s})^{\dim_{\fqs}
\fqs[\bfs{X}]_d}}\sum_{\mathcal{X}_j \subset
\fq}\sum_{\bfs a \in \fq^{r-1}}(q^{s})^{\dim_{\fqs} \fqs[\bfs{X}]_d-(d+1)}\\
&=\binom{q}{j}q^{-s(d+1)}.
\end{align*}
Combining **** and **** we obtain
$$P(S_3)=\sum_{j=1}^q(-1)^{j-1}\mathcal{N}_j=\sum_{j=1}^d(-1)^{j-1}\binom{q}{j}q^{-sj} + \sum_{j=d+1}^q\binom{q}{j}q^{-s(d+1)}.$$
Finally, since
$$
\sum_{j=d+1}^q\binom{q}{j}=\sum_{j=0}^q=(-1)^d\binom{q-1}{d}
$$
we readily deduce the statement of the theorem.
\end{proof}